\newtheorem{theo}{Theorem}[section]
\newtheorem{col}[theo]{Corollary}
\newtheorem{lem}[theo]{Lemma}
\newtheorem{prop}[theo]{Proposition}
\newtheorem{rem}[theo]{Remark}
\numberwithin{equation}{section}
\newcommand{\be}{\begin{equation}}
\newcommand{\ee}{\end{equation}}
\newcommand{\bes}{\begin{eqnarray}}
\newcommand{\ees}{\end{eqnarray}}
\newcommand{\bess}{\begin{eqnarray*}}
\newcommand{\eess}{\end{eqnarray*}}
\newcommand{\bali}{\begin{align}}
\newcommand{\eali}{\end{align}}
\begin{document}

\setlength{\baselineskip}{16pt} \pagestyle{myheadings}

\title{REPRESENTATIONs OF FINITE DIMENSIONAL POINTED HOPF ALGEBRAS OVER $\mathbb{Z}_n$}
\author{Ying Zhang}
\address{School of Mathematical Science, Yangzhou University, Yangzhou 225002, China;
and School of Science, Huaihai Institute of Technology, Lianyungang
222005, China} \email{jsgyzy22@sina.com}
\author{Hui-Xiang Chen}
\address{School of Mathematical Science, Yangzhou University, Yangzhou 225002, China}
\email{hxchen@yzu.edu.cn}
\subjclass[2000]{16G60, 16W30}
\keywords{simple module, projective cover, representation type }
\date{}
\maketitle
\noindent
\begin{abstract}
In this paper, we study the representations of the new
finite-dimensional pointed Hopf algebras in positive
characteristic given in \cite{Cib09}. We find that these Hopf
algebras are symmetric algebras. We determine the simple modules and
their projective covers over these Hopf algebras. We show that these
Hopf algebras are of wild representation type.
\end{abstract}

\section{Introduction and Preliminaries}\label{1}

The construction and classification of Hopf algebras play an
important role in the theory of Hopf algebras. During the last few
years several classification results for pointed Hopf algebras were
obtained based on the theory of Nichols algebras \cite{And98,
And02, And08}. In \cite{Cib09}, Cibils, Lauve and Witherspoon
studied Nichols algebras via an embedding in Hopf quiver algebras.
They constructed some new finite dimensional Hopf algebras in
positive characteristic $p$, which are pointed Hopf algebras over
$\mathbb{Z}_n$, the cyclic group of order $n$, where $p|n$. In this
paper, we study these Hopf algebras. We organize the paper as
follows. In this section, we recall some properties of projective
cover and representation theories of Artin algebras, and integrals
in a finite dimensional Hopf algebra, which can be
found in \cite{Aus95, Igl09, Mon93}. In Section \ref{2}, we introduce the
Hopf algebras $\mathcal{B}(V)\# kG$ and its ``lifting"
$H(\lambda,\mu)$ given in \cite{Cib09}, and
investigate some properties of $H(\lambda,\mu)$. We show that
$\mathcal{B}(V)\# kG$ and $H(\lambda,\mu)$ are symmetric algebras. In Section \ref{3}, we
describe the simple modules over $H(\lambda,\mu)$. Then we consider the tensor
products of simple module by using the idea of
\cite{Ch00} and prove that the
tensor product of any two simple modules is indecomposable. Through
computing idempotent elements, we find the projective covers of
these simple modules. In Section \ref{4}, we compute the extensions of
some simple modules over the Hopf algebras and prove that these Hopf
algebras are of wild representation type.

Now we recall some general facts about the representation theory of
a finite dimensional algebra. Let $A$ be a finite dimensional
algebra over an algebraically closed field and
$\widehat{A}=\{S_1,\cdots,S_n\}$ be a complete set of non-isomorphic
simple $A$-modules. Let $P(S)$ denote the projective cover of $S$,
$S\in\widehat{A}$. It is well-known that ${}_A\!A\cong
\bigoplus_{S\in\widehat{A}}P(S)^{{\rm dim} S}$ as left $A$-modules by
Wedderburn-Artin theorem.

Let $H$ be a finite-dimensional Hopf algebra.
A left integral in $H$ is an element $t\in H$ such that
$ht=\varepsilon(h)t$ for all $h\in H$. A right integral in $H$ is
an element $t'\in H$ such that $t'h=\varepsilon(h)t'$ for all $h\in H$.
$\int_H^l$ denotes the space of left integrals, and $\int_H^r$
denotes the space of right integrals. $H$ is called unimodular if
$\int_H^l=\int_H^r$. Note that $\int_H^l$ and $\int_H^r$ are each
one-dimensional (see \cite{Mon93}).

A $k$-algebra $A$ is called symmetric if there exists a nondegenerate
$k$-bilinear form $\beta:A\times A\rightarrow k$, which is
associative and symmetric. A symmetric algebra $A$ is self-injective,
that is, the left regular module $A$ is injective. A finite
dimensional Hopf algebra $H$ is a symmetric algebra if and only if
$H$ is unimodular and $S^2$ is inner, where $S$ is the antipode of $H$
\cite{Lorenz, ObSchn}.

Throughout this paper, we work over an algebraically closed field
$k$ with a positive characteristic $p$. All algebras, Hopf algebras
and modules are finite dimensional over $k$. Unless otherwise
stated, all maps are $k$-linear, dim and $\otimes$ stand for dim$_k$
and $\otimes_k$, respectively.

\section{The Hopf Algebras $\mathcal{B}(V)\# kG$ and $H(\lambda, \mu)$}\label{2}

Let $n>1$ be a positive integer with $p|n$. Let $G=\langle g\rangle$
be the cyclic group of order $n$. Then $kG$ has a 2-dimensional
indecomposable right-right Yetter-Drinfeld module $V$.
$V$ has a basis $\{v_1, v_2\}$ such that the right $kG$-action and $kG$-coaction
are defined by
$$v_1\cdot g=v_1,\ v_2\cdot g=v_1+v_2,\ \rho(v)=v\otimes g,\ v\in V.$$
Then one can form a Nichols algebra $\mathcal{B}(V)$ and the corresponding
pointed Hopf algebra $\mathcal{B}(V)\# kG$.
$\mathcal{B}(V)\# kG$ is a finite dimensional
graded Hopf algebra, which is generated as an algebra by three elements $g$,
$a$ and $b$ (see \cite{Cib09}).

When $p=2$, the generators $g$, $a$ and $b$ of $\mathcal{B}(V)\# kG$ are subject to
the relations:
\begin{eqnarray*}
& g^n=1,\ g^{-1}ag=a,\ g^{-1}bg=a+b,&
\end{eqnarray*}
\begin{eqnarray*}
& a^2=0, \ b^4=0, \ baba=abab, \ {b^2}a=ab^2+aba. &
\end{eqnarray*}
When $p>2$, the generators $g$, $a$ and $b$ of $\mathcal{B}(V)\# kG$ are subject to
the relations:
\begin{eqnarray*}
& g^n=1, \ g^{-1}ag=a,\  g^{-1}bg=a+b, &
\end{eqnarray*}
\begin{eqnarray*}
& a^p=0, \ b^p=0, \ ba=ab+{\frac{1}{2}}a^2. &
\end{eqnarray*}
The coalgebra structure and the antipode of $\mathcal{B}(V)\# kG$ are determined by
\begin{eqnarray*}
& \bigtriangleup(g)=g\otimes g, \ \bigtriangleup(a)={a\otimes
1}+{g\otimes a}, \ \bigtriangleup(b)={b\otimes 1}+{g\otimes b};&
\end{eqnarray*}
\begin{eqnarray*} & \varepsilon(g)=1,\ \varepsilon(a)=\varepsilon(b)=0;&
\end{eqnarray*}
\begin{eqnarray*}
S(g)=g^{-1}, \ S(a)=-g^{-1}a, \ S(b)=-g^{-1}b.
\end{eqnarray*}
Note that $kG$ is the coradical of $\mathcal{B}(V)\# kG$ and $kG$ is a Hopf subalgebra
of $\mathcal{B}(V)\# kG$.

Furthermore, one may construct filtered pointed Hopf algebras as ``lifting"
of $\mathcal{B}(V)\# kG$, that is those whose associated graded algebra is
$\mathcal{B}(V)\# kG$. In the case of $p>2$, Cibils, Lauve and Witherspoon
gave some examples of liftings of $\mathcal{B}(V)\# kG$, which can be described as follows.

Assume $p>2$, and let $\lambda, \mu\in k$. The Hopf algebra $H(\lambda, \mu)$
is generated, as an algebra, by $g$, $a$ and $b$ with the relations
\begin{eqnarray*}
& g^n=1,\  g^{-1}ag=a, \ g^{-1}bg=a+b, &
\end{eqnarray*}
\begin{eqnarray*} & a^p=\lambda(1-g^p), \ b^p=\mu(1-g^p),\
ba=ab+{\frac{1}{2}}a^2.&
\end{eqnarray*}
The coalgebra structure and the antipode of $H(\lambda, \mu)$ are
determined by the same equations as $\mathcal{B}(V)\# kG$. Note that
$kG$ is the coradical of $H(\lambda, \mu)$ and $kG$ is a Hopf
subalgebra of $H(\lambda, \mu)$. Moreover, when $\lambda=\mu=0$,
$H(0,0)=\mathcal{B}(V)\# kG$.

\begin{lem}\label{2.1}
When $p=2$, in $\mathcal{B}(V)\# kG$ we have

 $(1)$ $bg^i=ig^ia+g^ib$, $i\geqslant 0$. In particular, $g^2$ is central
in $\mathcal{B}(V)\# kG$.

$(2)$ $\mathcal{B}(V)\# kG$ is a symmetric Hopf algebra.
\end{lem}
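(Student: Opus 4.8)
The plan is to establish part (1) by a straightforward induction on $i$, and then to derive part (2) from the characterization of symmetric Hopf algebras recalled in the introduction, namely that a finite-dimensional Hopf algebra $H$ is symmetric if and only if $H$ is unimodular and $S^2$ is inner. For part (1), I would start from the defining relation $g^{-1}bg=a+b$, or equivalently $bg=g(a+b)=ga+gb$, and note that $g^{-1}ag=a$ gives $ga=ag$, so in fact $bg=ag+gb$. The claim $bg^i=ig^ia+g^ib$ then follows by induction: assuming it for $i$, multiply on the right by $g$ and use $bg=ag+gb$ together with centrality of $a$ relative to $g$. For the ``in particular'' statement, set $i=2$ (or note $i=n$ forces $ng^na=0$): since $p=2$, the coefficient $i=2$ is zero in $k$, so $bg^2=g^2b$; combined with $ag^2=g^2a$ and $g^2g=gg^2$, this shows $g^2$ commutes with all three algebra generators, hence is central.

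For part (2), the first task is to show $\mathcal{B}(V)\# kG$ is unimodular. I would exhibit an explicit nonzero left integral $t$ and check that it is also a right integral, or equivalently compare $\int^l$ and $\int^r$ directly. A natural candidate is $t = \big(\sum_{i=0}^{n-1} g^i\big)\, a\, b^3$ (the product of the norm element of $kG$ with the top-degree elements in $a$ and $b$), since $a^2=0$ and $b^4=0$ make $ab^3$ essentially the socle generator in the $\{a,b\}$-directions; one verifies $ht=\varepsilon(h)t$ for $h\in\{g,a,b\}$ using the relations, and similarly $th=\varepsilon(h)t$, the point being that $g^i$ summed over $i$ absorbs left and right multiplication by $g$ symmetrically, and that $a,b$ act as zero on both sides because of the nilpotency relations and part (1). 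The second task is to show $S^2$ is inner. From the formulas $S(g)=g^{-1}$, $S(a)=-g^{-1}a$, $S(b)=-g^{-1}b$ one computes $S^2(g)=g$, $S^2(a)=S(-g^{-1}a)=-S(a)S(g^{-1})=g^{-1}a\,g=g^{-1}(ag)=a$ — wait, one must be careful with the anti-automorphism property and the relation $bg=ag+gb$ — in any case $S^2(a)=a$ and $S^2(b)=b+(\text{correction})$, and one checks that $S^2$ is conjugation by $g$ (or by some fixed group-like element, possibly $g^{-1}$), i.e. $S^2(x)=g x g^{-1}$ for all $x$; on generators this reads $g g g^{-1}=g$, $g a g^{-1}=a$, $g b g^{-1}=g(g^{-1}bg)g^{-1}\cdot$, using $g^{-1}bg = a+b$ and hence $gbg^{-1}=b-a$ in characteristic $2$, which must be reconciled with the direct computation of $S^2(b)$. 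Once both conditions are verified, the cited result gives that $\mathcal{B}(V)\# kG$ is symmetric.

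The main obstacle I anticipate is the bookkeeping in the $S^2$-inner verification: the antipode is an algebra anti-homomorphism, so $S^2$ is an algebra homomorphism, but the noncommutativity of $g$ with $b$ (governed by part (1)) means the naive computation of $S^2(b)$ must be done carefully and then matched against conjugation by the correct group-like element. A secondary obstacle is confirming that the proposed integral is genuinely annihilated (up to $\varepsilon$) on both sides by $a$ and $b$; this again leans on part (1) to push factors of $g$ past $a$ and $b$ so that the nilpotency relations $a^2=0$, $b^4=0$ can be applied. Neither step involves any deep idea — the work is entirely in the relations of $\mathcal{B}(V)\# kG$ — but both require attention to the characteristic-$2$ arithmetic where coefficients like $2$, $4$, and binomial coefficients vanish.
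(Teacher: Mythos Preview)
Your approach for part (1) and the overall strategy for part (2) match the paper exactly: prove unimodularity by exhibiting an explicit two-sided integral, then check that $S^2$ is conjugation by a group-like element.

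The genuine gap is your candidate integral. You propose $t=(\sum_i g^i)\,ab^3$, reasoning from $a^2=0$ and $b^4=0$ that $ab^3$ is the ``top element''. But in the $p=2$ case the Nichols algebra $\mathcal{B}(V)$ is \emph{not} generated by $a,b$ subject only to $a^2=b^4=0$; the additional relations $baba=abab$ and $b^2a=ab^2+aba$ do not force $ba$ into the span of $ab$ alone, and in fact $\mathcal{B}(V)$ is $16$-dimensional with top-degree basis element $abab^3$ (degree $6$), not $ab^3$ (degree $4$). Concretely, your $t$ fails to be a left integral: using part (1),
\[
bt=\sum_i bg^i\,ab^3=\sum_i(ig^ia+g^ib)ab^3=\Big(\sum_i g^i\Big)bab^3,
\]
and $bab^3$ is a nonzero basis element, so $bt\neq 0=\varepsilon(b)t$. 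The paper takes $t=(\sum_i g^i)\,abab^3$; then $at=0$ from $a^2=0$, and $bt=(\sum_i g^i)babab^3=(\sum_i g^i)abab\cdot b^3=(\sum_i g^i)abab^4=0$ using $baba=abab$ and $b^4=0$. The right-integral checks for $g,a,b$ are similar and use the same relations.

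Your hesitation over $S^2$ is unnecessary. Directly, $S^2(b)=S(-g^{-1}b)=-S(b)S(g^{-1})=g^{-1}bg$, so $S^2$ is conjugation by $g^{-1}$ on all three generators; there is nothing to reconcile. (Your computation $gbg^{-1}=b-a$ is also $a+b$ in characteristic $2$, so conjugation by $g$ would have worked equally well.)
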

\begin{proof}
(1) It can be proved by induction on $i$ from the relation
$g^{-1}bg=a+b$.

(2) Let $H=\mathcal{B}(V)\# kG$. Then the set $\{g^iabab^3|0\leqslant i\leqslant n-1\}$
are linearly independent in $H$ by \cite[Theorem 3.1 and Corollary 3.4]{Cib09}.
Let $t=(\sum\limits_{0\leqslant i\leqslant n-1}g^i)abab^3$.
Then $t$ is a non-zero element of $H$.
Since $g^n=1$, $g(\sum\limits_{0\leqslant i\leqslant n-1}g^i)=\sum\limits_{0\leqslant i\leqslant n-1}g^i$.
It follows that $gt=t=\varepsilon(g)t$. By the definition of
$H$, we also have $at=(\sum\limits_{0\leqslant i\leqslant n-1}g^i)a^2bab^3=0=\varepsilon(a)t$ and
$bt=(\sum\limits_{0\leqslant i\leqslant n-1}bg^i)abab^3=\sum\limits_{0\leqslant i\leqslant n-1}(ig^ia+g^ib)abab^3
=\sum\limits_{0\leqslant i\leqslant n-1}g^ibabab^3=\sum\limits_{0\leqslant i\leqslant n-1}g^iabab^4=0=\varepsilon(b)t$.
Since $g, a, b$ are generators of $H$, it follows that $\int_H^l=kt$.
On the other hand, we have $a(a+b)=a^2+ab=ab$ and $bg=g(a+b)$. Hence
$tg=(\sum\limits_{0\leqslant i\leqslant n-1}g^i)abab^3g=(\sum\limits_{0\leqslant i\leqslant n-1}g^i)ga(a+b)a(a+b)^3
=(\sum\limits_{0\leqslant i\leqslant n-1}g^i)abab^3=\varepsilon(g)t$.
We also have $ta=(\sum\limits_{0\leqslant i\leqslant n-1}g^i)abab^3a
=(\sum\limits_{0\leqslant i\leqslant n-1}g^i)abab(ab^2+aba)
=(\sum\limits_{0\leqslant i\leqslant n-1}g^i)baba(ab^2+aba)
=0=\varepsilon(a)t$ and
$tb=(\sum\limits_{0\leqslant i\leqslant n-1}g^i)abab^4=0=\varepsilon(b)t$.
Thus, $\int_H^r=kt=\int_H^l$, and so $H$ is unimodular. It is easy to check
that $S^2(g)=g$, $S^2(a)=g^{-1}ag$ and $S^2(b)=g^{-1}bg$. Hence $S^2$ is
inner since $S^2$ is an algebra automorphism. It follows that $H$ is a symmetric
Hopf algebra.
\end{proof}

In the rest of this section, we assume $p>2$. Let $n=p^st$ with
$p\nmid t$ and $s\geqslant 1$. Let $\lambda, \mu\in k$. Now we give some properties of
$H(\lambda,\mu)$.

\begin{lem}\label{2.2} In $H(\lambda,\mu)$, we have

$(1)$ $bg^i=ig^ia+g^ib$, $ba^j=a^jb+\frac{j}{2}a^{j+1}$ and
$bg^ia^j=(i+\frac{j}{2})g^ia^{j+1}+g^ia^jb$ for all $i,\ j\geqslant 0$.
In particular, $g^p$ is central in $H(\lambda,\mu)$.

$(2)$ If $1\leqslant m\leqslant p-1$, then
$$ab^m =\sum_{0\leqslant i\leqslant m}\alpha_{m,i}b^{m-i}a^{i+1},$$
where $\alpha_{m,i}\in k$ with $\alpha_{m,0}=1$,
$\alpha_{m,1}=-\frac{m}{2}$ and $\alpha_{m,2}=\frac{1}{4}m(m-1)$.

$(3)$ If $1\leqslant m\leqslant p-1$, then
$$gb^m =\sum_ {0\leqslant i\leqslant m}\beta_{m,i}b^{m-i}ga^i,$$
where $\beta_{m,i}\in k$ with $\beta_{m,0}=1$,
$\beta_{m,1}=-m$ and $\beta_{m,2}=\frac{3}{4}m(m-1)$.
\end{lem}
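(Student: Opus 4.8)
The plan is to prove all three identities by induction, exploiting the fundamental relations $g^{-1}bg = a+b$ (equivalently $bg = g(a+b)$) and $ba = ab + \frac12 a^2$, together with the fact that $g$ commutes with $a$ (so $ga^j = a^j g$).

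For part (1), I would first establish $bg^i = ig^i a + g^i b$ by induction on $i$: the base case $i=0$ is trivial, and the inductive step uses $bg^{i+1} = (bg^i)g = (ig^ia + g^ib)g = ig^{i+1}a + g^i(bg) = ig^{i+1}a + g^i g(a+b) = (i+1)g^{i+1}a + g^{i+1}b$, using $ag = ga$. Similarly $ba^j = a^j b + \frac{j}{2}a^{j+1}$ follows by induction on $j$ from $ba^{j+1} = (ba^j)a = (a^j b + \frac{j}{2}a^{j+1})a = a^j(ba) + \frac{j}{2}a^{j+2} = a^j(ab + \frac12 a^2) + \frac{j}{2}a^{j+2} = a^{j+1}b + \frac{j+1}{2}a^{j+2}$. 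The combined formula $bg^ia^j = (i+\frac{j}{2})g^i a^{j+1} + g^i a^j b$ then follows by first moving $b$ past $g^i$ and then past $a^j$ (using that $g$ and $a$ commute to keep the powers bundled). Finally, taking $i=p$ gives $bg^p = pg^p a + g^p b = g^p b$ since $\mathrm{char}\,k = p$, and $g^p$ manifestly commutes with $g$ and with $a$; since $g,a,b$ generate $H(\lambda,\mu)$, this shows $g^p$ is central.

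For parts (2) and (3), the idea is the same: induct on $m$. For (2), assuming $ab^m = \sum_{0\le i\le m}\alpha_{m,i}b^{m-i}a^{i+1}$, multiply on the right by $b$ and push each $a^{i+1}$ past the single $b$ using the $ba^j$-formula from (1), namely $a^{i+1}b = b a^{i+1} - \frac{i+1}{2}a^{i+2}$; collecting terms in $b^{(m+1)-i}a^{i+1}$ yields a recursion $\alpha_{m+1,i} = \alpha_{m,i} - \frac{i}{2}\alpha_{m,i-1}$ (with appropriate boundary conventions), from which $\alpha_{m+1,0}=1$, $\alpha_{m+1,1} = \alpha_{m,1} - \frac12\alpha_{m,0} = -\frac{m}{2} - \frac12 = -\frac{m+1}{2}$, and $\alpha_{m+1,2} = \alpha_{m,2} - \alpha_{m,1} = \frac14 m(m-1) + \frac{m}{2} = \frac14 m(m+1) = \frac14(m+1)m$, matching the claimed formulas. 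Part (3) is analogous: starting from $gb^m = \sum_{0\le i\le m}\beta_{m,i}b^{m-i}ga^i$, multiply on the left by... actually one multiplies $gb^{m+1} = (gb^m)b$ is the wrong direction, so instead write $gb^{m+1} = (gb)b^m = g(a+b)b^m$; wait — cleaner is to use $gb^{m+1} = g(b^m \cdot b)$ and apply the inductive hypothesis to the leftmost $g$... I would in fact set it up as $gb^{m+1} = (gb^m)b = \sum \beta_{m,i}b^{m-i}(ga^i)b = \sum\beta_{m,i}b^{m-i}g a^i b$ and then move $a^ib$ to $ba^i - \frac{i}{2}a^{i+1}$ via (1); this gives $gb^{m+1} = \sum\beta_{m,i}b^{m-i}g(ba^i - \frac{i}{2}a^{i+1})$. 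The $b^{m-i}gb$ needs one more application of $gb = g b$... here one uses $b^{m-i}g = g\,b^{m-i}\cdot(\text{correction})$; to avoid a double induction it is better to instead commute $b$ across $g$ at the very end via $b^{m-i}g = \sum_j \binom{m-i}{?}\dots$. The recursion works out to $\beta_{m+1,i} = \beta_{m,i} - i\,\beta_{m,i-1}$ after accounting for the extra factor from moving $g$ past the remaining $b$'s, and one checks $\beta_{m+1,1} = \beta_{m,1}-\beta_{m,0} = -m-1$, $\beta_{m+1,2}=\beta_{m,2}-2\beta_{m,1} = \frac34 m(m-1)+2m = \frac34 m(m+1)$, confirming the stated coefficients.

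The main obstacle I anticipate is part (3): because $g$ does \emph{not} commute with $b$ (only $g^p$ does, from part (1)), the inductive step for $gb^m$ forces an interaction between moving $b$ past $a$ and moving $b$ past $g$, and one must organize the computation so that it does not collapse into a nested double induction over both $m$ and the exponents of $a$. The cleanest route is probably to prove (2) first in full, then derive (3) by combining (2) with the relation $gb = g(a+b) - ga = \dots$; more precisely, I expect the slick argument to be: $gb^m = g b \cdot b^{m-1}$, rewrite $gb$ and iterate, or alternatively use that in the graded algebra $\mathcal B(V)\#kG$ the analogous identity holds and the lifting terms ($a^p, b^p$ proportional to $1-g^p$) do not affect these low-degree relations since $m \le p-1$. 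Tracking the constants $\beta_{m,1}, \beta_{m,2}$ through whichever recursion emerges will be the one genuinely fiddly computation, but it is purely mechanical once the recursion is correctly set up.
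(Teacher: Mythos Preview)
Your treatment of parts (1) and (2) is correct and is exactly what the paper does: induction on the exponent, using $a^{i+1}b = ba^{i+1} - \frac{i+1}{2}a^{i+2}$ to obtain the recursion $\alpha_{m+1,i} = \alpha_{m,i} - \frac{i}{2}\alpha_{m,i-1}$.

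The gap is in part (3). Your setup $gb^{m+1} = (gb^m)b = \sum_i \beta_{m,i}\, b^{m-i} g a^i b$ is fine and is what the paper does, but you then get tangled worrying about moving $g$ past powers of $b$. That is unnecessary: treat $g a^i b$ as a single unit. From part (1) one has $a^i b = b a^i - \tfrac{i}{2} a^{i+1}$, and from $bg = g(a+b)$ one has $gb = bg - ga$; combining these gives
\[
g a^i b \;=\; (gb)a^i - \tfrac{i}{2}\, g a^{i+1} \;=\; (bg - ga)a^i - \tfrac{i}{2}\, g a^{i+1} \;=\; b\, g a^i \;-\; \tfrac{i+2}{2}\, g a^{i+1}.
\]
Substituting back yields terms only of the form $b^{m+1-i} g a^i$ and $b^{m-i} g a^{i+1}$, so no double induction is needed, and the recursion is
\[
\beta_{m+1,i} \;=\; \beta_{m,i} - \tfrac{i+1}{2}\,\beta_{m,i-1},
\]
which is the recursion the paper records. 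Your proposed recursion $\beta_{m+1,i} = \beta_{m,i} - i\,\beta_{m,i-1}$ is wrong, and your ``verification'' $\tfrac34 m(m-1) + 2m = \tfrac34 m(m+1)$ contains an arithmetic slip (the left side is $\tfrac{3m^2+5m}{4}$, not $\tfrac{3m^2+3m}{4}$) that would have flagged the error. With the correct recursion, $\beta_{m+1,2} = \tfrac34 m(m-1) - \tfrac32(-m) = \tfrac34 m(m+1)$ as required.
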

\begin{proof}
(1) The first two equalities can be proved by induction on $i$ and $j$, respectively.
The third one follows from the first two equalities.

(2) By the relations of the generators, $ab^m$ can be
expressed as $ab^m = \sum\limits_{0\leqslant i\leqslant m}\alpha_{m,i}b^{m-i}a^{i+1}$
for some $\alpha_{m,i}\in k$. Then for $1\leqslant m< p-1$, by Part (1) we have
\begin{equation*}
\begin{split}
ab^{m+1}&=(\sum\limits_{0\leqslant i\leqslant m}\alpha_{m,i}b^{m-i}a^{i+1})b\\
&=\sum\limits_{0\leqslant i\leqslant m}\alpha_{m,i}b^{m-i}(a^{i+1}b)\\
&=\sum\limits_{0\leqslant i\leqslant m}\alpha_{m,i}b^{m-i}(ba^{i+1}-\frac{i+1}{2}a^{i+2})\\
&=\sum\limits_{0\leqslant i\leqslant m}\alpha_{m,i}b^{m+1-i}a^{i+1}-\sum\limits_{0\leqslant i\leqslant
m}\frac{i+1}{2}\alpha_{m,i}b^{m-i}a^{i+2}.
\end{split}
\end{equation*}
Hence one gets that $\alpha_{m+1,0}=\alpha_{m,0}$, $\alpha_{m+1,m+1}=-\frac{m+1}{2}\alpha_{m,m}$
and $\alpha_{m+1,i}=\alpha_{m,i}-\frac{i}{2}\alpha_{m,i-1}$
for all $1\leqslant i\leqslant m$.
From the definition of $H(\lambda, \mu)$, we know that $\alpha_{1,0}=1$ and $\alpha_{1,1}=-\frac{1}{2}$.
Then by induction on $m$, it is easy to check that
$\alpha_{m,0}=1$, $\alpha_{m,1}=-\frac{m}{2}$ and
$\alpha_{m,2}=\frac{1}{4}m(m-1)$ for all $1\leqslant m\leqslant p-1$.

(3) It is similar to Part (2). We also have $\beta_{1, 0}=1$,
$\beta_{1, 1}=-1$, $\beta_{m+1, 0}=\beta_{m, 0}$,
$\beta_{m+1, m+1}=-\frac{m+2}{2}\beta_{m, m}$ and
$\beta_{m+1, i}=\beta_{m, i}-\frac{i+1}{2}\beta_{m, i-1}$
for all $1\leqslant i\leqslant m<p-1$.
\end{proof}

\begin{lem}\label{2.3} $H(\lambda,\mu)$ is a symmetric Hopf algebra.
\end{lem}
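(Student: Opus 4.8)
The plan is to invoke the criterion recalled in Section~\ref{1}: a finite dimensional Hopf algebra is a symmetric algebra if and only if it is unimodular and its squared antipode $S^2$ is inner. So I must establish two things for $H(\lambda,\mu)$: that $S^2$ is an inner automorphism, and that it possesses a two-sided (left $=$ right) integral.

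The first point is quick. From $S(g)=g^{-1}$, $S(a)=-g^{-1}a$, $S(b)=-g^{-1}b$ together with the fact that $S$ is an anti-algebra map, a direct computation on generators gives $S^2(g)=g$, $S^2(a)=g^{-1}ag=a$ and $S^2(b)=g^{-1}bg=a+b$. Thus $S^2$ agrees on the generators $g,a,b$ with the inner automorphism $x\mapsto g^{-1}xg$ (conjugation by the unit $g^{-1}$); since both maps are algebra endomorphisms coinciding on a generating set, they are equal, so $S^2$ is inner.

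For the integral, following the $p=2$ case of Lemma~\ref{2.1}, I set $\Lambda=\sum_{0\leqslant i\leqslant n-1}g^i$ and $t=\Lambda a^{p-1}b^{p-1}$, which is nonzero by the PBW basis of $H(\lambda,\mu)$ from \cite{Cib09}. Everything hinges on two facts: $g^p$ is central by Lemma~\ref{2.2}(1), hence $1-g^p$ and the elements $a^p=\lambda(1-g^p)$, $b^p=\mu(1-g^p)$ are central; and $\Lambda(1-g^p)=0$ together with $\Lambda_1(1-g^p)=0$, where $\Lambda_1=\sum_{0\leqslant i\leqslant n-1}ig^i$ (here I use $p\mid n$, so both $p$ and $n$ vanish in $k$). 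First I verify $t$ is a left integral: $gt=t$ is immediate from $g\Lambda=\Lambda$; since $a$ commutes with $g$, $at=\Lambda a^pb^{p-1}=\lambda\,\Lambda(1-g^p)b^{p-1}=0$; and using $b\Lambda=\Lambda_1 a+\Lambda b$ and $ba^{p-1}=a^{p-1}b+\tfrac{p-1}{2}a^p$ from Lemma~\ref{2.2}(1), each summand of $bt$ carries a central factor $1-g^p$ multiplied by $\Lambda$ or $\Lambda_1$, so $bt=0$. Hence $\int_H^l=kt$.

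It remains to show $t$ is also a right integral, i.e. $tg=t$, $ta=0$, $tb=0$; this is the crux of the argument. The relation $tb=\Lambda a^{p-1}b^p=\mu\,\Lambda(1-g^p)a^{p-1}=0$ is immediate, but $ta$ and $tg$ are more delicate, and the clean way to handle them is to pass to the associated graded algebra $\mathcal{B}(V)\#kG=H(0,0)$, where $a^p=b^p=0$. The key structural observation is that under $ba=ab+\tfrac12a^2$ the $a$-degree of a monomial never decreases as one reorders into PBW form $a^ib^j$, so any monomial of $a$-degree $\geqslant p$ reduces to $0$ in $\mathcal{B}(V)\#kG$; consequently $a^{p-1}b^{p-1}a=0$ there, and (using $b^{p-1}g=g(a+b)^{p-1}$) $a^{p-1}b^{p-1}g=g\,a^{p-1}(a+b)^{p-1}=g\,a^{p-1}b^{p-1}$, since every term of $a^{p-1}(a+b)^{p-1}$ other than $a^{p-1}b^{p-1}$ has $a$-degree $\geqslant p$. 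Lifting these identities to $H(\lambda,\mu)$ alters them only through the substitutions $a^p=\lambda(1-g^p)$, $b^p=\mu(1-g^p)$, so the discrepancies all lie in the central ideal $(1-g^p)H$; multiplying on the left by $\Lambda$ and using $\Lambda g=\Lambda$, $\Lambda(1-g^p)=0$ then gives $ta=0$ and $tg=t$. Therefore $t$ is a right integral, $\int_H^r=kt=\int_H^l$, $H(\lambda,\mu)$ is unimodular, and with $S^2$ inner it is symmetric. I expect the identity $tg=t$ to be the main obstacle, precisely because it forces one to control the top-degree product $a^{p-1}(a+b)^{p-1}$; the filtration argument above — $a$-degree monotonicity plus reduction modulo the central ideal $(1-g^p)H$ — is what replaces an unwieldy direct expansion.
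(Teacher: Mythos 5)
Your proof is correct and follows essentially the same route as the paper: the same integral $t=\bigl(\sum_{0\leqslant i\leqslant n-1}g^i\bigr)a^{p-1}b^{p-1}$, the same key facts ($g^p$ central, $\Lambda(1-g^p)=\Lambda_1(1-g^p)=0$, every surplus term carrying a central factor $a^p=\lambda(1-g^p)$), and the same unimodular-plus-$S^2$-inner criterion. Your "$a$-degree monotonicity" packaging of the identities $b^{p-1}a=a(b+\tfrac12a)^{p-1}$ and $(a+b)^{p-1}=b^{p-1}+a\sum_j\alpha_ja^jb^{p-2-j}$ is just a cleaner phrasing of the computation the paper carries out explicitly.
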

\begin{proof}
From $g^n=1$ and char$k=p$,
it is easy to check that $g(\sum\limits_{0\leqslant i\leqslant n-1}g^i)=\sum\limits_{0\leqslant
i\leqslant n-1}g^i$ and $(1-g^p)(\sum\limits_{0\leqslant i\leqslant
n-1}ig^i)=0$. Since $\{g^ia^{p-1}b^{p-1}|0\leqslant i\leqslant
n-1\}$ are linearly independent (see \cite{Cib09}), $t=(\sum\limits_{0\leqslant
i\leqslant n-1}g^i)a^{p-1}b^{p-1}$ is a non-zero element of
$H(\lambda,\mu)$. Then we have $gt=t=\varepsilon(g)t$,
$at=a^p(\sum\limits_{0\leqslant i\leqslant
n-1}g^i)b^{p-1}=\lambda(1-g^p)(\sum\limits_{0\leqslant i\leqslant
n-1}g^i)b^{p-1}=0=\varepsilon(a)t$ and
\begin{equation*}
\begin{split}
bt&=(\sum_{0\leqslant i\leqslant
n-1}bg^ia^{p-1})b^{p-1}\\
&=[\sum_{0\leqslant i\leqslant
n-1}(i+\frac{p-1}{2})g^ia^p+g^ia^{p-1}b]b^{p-1}\\
&=a^p(\sum_{0\leqslant i\leqslant
n-1}ig^i)b^{p-1}+\frac{p-1}{2}a^p(\sum_{0\leqslant i\leqslant
n-1}g^i)b^{p-1}+b^p(\sum_{0\leqslant i\leqslant n-1}g^i)a^{p-1}\\
 &=0=\varepsilon(b)t.
\end{split}
\end{equation*}
Since $g,a,b$ are generators of $H(\lambda,\mu)$, one gets that
$\int_H^l=kt$. On the other hand, since $ba=a(b+\frac{1}{2}a)$, we
have $(a+b)^{p-1}=b^{p-1}+a\sum\limits_{0\leqslant j\leqslant
p-2}\alpha_ja^jb^{p-2-j}$ for some $\alpha_j\in k$. Hence
\begin{equation*}
\begin{split}
tg&=(\sum_{0\leqslant i\leqslant
n-1}g^i)a^{p-1}b^{p-1}g\\
&=(\sum_{0\leqslant i\leqslant
n-1}g^i)a^{p-1}g(a+b)^{p-1}\ \ ({\rm by}\ bg=g(a+b))\\
&=(\sum_{0\leqslant i\leqslant
n-1}g^i)a^{p-1}[b^{p-1}+a\sum_{0\leqslant j\leqslant p-2}\alpha_ja^jb^{p-2-j}]\\
&=t+a^p(\sum_{0\leqslant i\leqslant
n-1}g^i)(\sum_{0\leqslant j\leqslant p-2}\alpha_ja^jb^{p-2-j})\\
&=t=\varepsilon(g)t,\\
\end{split}
\end{equation*}
\begin{equation*}
\begin{split}
ta&=(\sum_{0\leqslant i\leqslant
n-1}g^i)a^{p-1}b^{p-1}a\\
&=(\sum_{0\leqslant i\leqslant
n-1}g^i)a^{p-1}a(b+\frac{1}{2}a)^{p-1}\ ({\rm by}\  ba=a(b+\frac{1}{2}a))\\
&=a^p(\sum_{0\leqslant i\leqslant n-1}g^i)(b+\frac{1}{2}a)^{p-1}\\
&=0=\varepsilon(a)t\\
\end{split}
\end{equation*}
and
\begin{equation*}
\begin{split}
tb&=b^p(\sum_{0\leqslant i\leqslant
n-1}g^i)a^{p-1}=0=\varepsilon(b)t,
\end{split}
\end{equation*}
where we use the facts that $a^p=\lambda(1-g^p)$ and $b^p=\mu(1-g^p)$
are central elements in $H(\lambda, \mu)$.
Thus, $\int_H^r=kt=\int_H^l$, and so $H$ is unimodular. It is easy to check
that $S^2$ is inner. It follows that $H$ is a symmetric Hopf algebra.
\end{proof}

\begin{lem}\label{2.4}
Let $J$ be the Jacobson radical of $H(\lambda,\mu)$. Then

$(1)$ If $t=1$, then $a, b\in J$.

$(2)$ If $t>1$ and $\lambda\mu\neq0$, then $a,b \not\in J$.
\end{lem}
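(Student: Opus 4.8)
The plan is to analyze $H=H(\lambda,\mu)$ through its structure over the central subalgebra generated by $g^p$, exploiting the decomposition $n=p^st$ with $p\nmid t$. Since $g$ has order $n=p^st$, the element $g^p$ has order $p^{s-1}t$, and crucially $g^{p^s}$ generates a cyclic group of order $t$ with $p\nmid t$; by Maschke's theorem $k\langle g^{p^s}\rangle$ is semisimple, so it splits as a product of copies of $k$ indexed by the $t$-th roots of unity. This gives a corresponding block decomposition of $H$. The radical $J$ will be controlled block-by-block, and the position of $a,b$ relative to $J$ is governed by whether the relations $a^p=\lambda(1-g^p)$, $b^p=\mu(1-g^p)$ force $a,b$ to be nilpotent in each block.

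For Part (1), when $t=1$ we have $n=p^s$, so $g$ is a $p$-element and $1-g$ is nilpotent in $k\langle g\rangle$ (indeed $(1-g)^{p^s}=1-g^{p^s}=1-g^n=0$ over characteristic $p$). First I would observe that the augmentation-type quotient: the two-sided ideal generated by $1-g$ (equivalently, by $a^p,b^p$, since $1-g^p=-(1-g)^p\cdot(\text{unit})$... more carefully, $1-g^p$ lies in the ideal generated by $1-g$, being a multiple of it in the commutative ring $k\langle g\rangle$) — shows that modulo this ideal, $a^p=b^p=0$. Then I would argue $H$ is generated by $g,a,b$ with $1-g$, $a$, $b$ all nilpotent: $1-g$ is nilpotent, and modulo the ideal generated by $1-g$ the images $\bar a,\bar b$ satisfy $\bar a^p=\bar b^p=0$ together with the twisted-commutation $\bar b\bar a=\bar a\bar b+\tfrac12\bar a^2$. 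One checks via Lemma \ref{2.2}(2) that the subalgebra generated by $\bar a,\bar b$ is spanned by monomials $\bar b^i\bar a^j$ with $0\le i,j\le p-1$, hence finite-dimensional with every element of the augmentation ideal nilpotent, so that ideal is nilpotent. Therefore the ideal of $H$ generated by $\{1-g,a,b\}$ is nilpotent, and since the quotient by it is $k$ (all generators die), this ideal is exactly $J$; in particular $a,b\in J$.

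For Part (2), when $t>1$ and $\lambda\mu\ne0$, I would exhibit a block in which $a$ (resp. $b$) acts invertibly modulo the radical, which forces $a,b\notin J$. Pick a primitive $t$-th root of unity $\omega\ne1$ in $k$ (possible since $t>1$ and $p\nmid t$, so $k$ contains $t$ distinct $t$-th roots of unity), and let $e$ be the central idempotent of $k\langle g^{p^s}\rangle\subseteq Z(H)$ on which $g^{p^s}$ acts as $\omega$. In the block $He$, the element $g^p$ is no longer unipotent: since $(g^p)^{p^{s-1}t}=1$ and $g^{p^s}=\omega\cdot 1$ on $He$, the element $g^p$ is a unit with $(g^p-1)$ not nilpotent (as $g^p$ has a nontrivial $t$-th-power part $\omega\ne1$). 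Hence $1-g^p$ is a unit in $He$ (its $p^{s-1}$-th power, up to unipotent corrections, is $1-\omega\ne0$), and consequently $a^p=\lambda(1-g^p)e$ and $b^p=\mu(1-g^p)e$ are both invertible in $He$ because $\lambda\mu\ne0$. An element whose $p$-th power is invertible cannot lie in the (nilpotent) radical, so $ae,be\notin J(He)=Je$; therefore $a,b\notin J$.

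The main obstacle I anticipate is the bookkeeping in Part (1): one must verify carefully that the ideal generated by $\{1-g,a,b\}$ is genuinely nilpotent and not merely locally nilpotent, which requires a clean spanning set for $H$ modulo that ideal (or for the relevant subalgebra), and this is where Lemma \ref{2.2} — giving normal forms for $ab^m$ and $gb^m$ — does the real work of showing the monomials $g^k b^i a^j$ ($0\le k\le n-1$, $0\le i,j\le p-1$) span $H$, so that the augmentation ideal modulo $(1-g)$ is spanned by finitely many nilpotents closed under multiplication. A secondary subtlety in Part (2) is confirming that $1-g^p$ really is a unit in the chosen block rather than just nonzero; this follows from the Chinese Remainder decomposition of $k\langle g^p\rangle$ along the $p\nmid t$ directions, but it should be stated explicitly.
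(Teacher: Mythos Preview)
Your Part~(2) is correct but more elaborate than necessary. The paper simply observes that when $t>1$ and $\lambda\mu\neq 0$, the element $a$ is not nilpotent: since $g^{p^m}\neq 1$ for every $m\geqslant 0$, one has $a^{p^m}=\lambda^{p^{m-1}}(1-g^{p^m})\neq 0$ for all $m$, so $a\notin J$; likewise for $b$. Your block decomposition and the verification that $(1-g^p)e$ is a unit in $He$ (via $(1-g^p)^{p^{s-1}}e=(1-\omega)e$) are valid and yield the stronger statement that $ae$ is actually a unit in that block, but the one-line non-nilpotency check already suffices.

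Your Part~(1) has a genuine gap. From ``$1-g$ is a nilpotent element'' together with ``the augmentation ideal of $H/\langle 1-g\rangle$ is nilpotent'' you conclude that $\langle 1-g,a,b\rangle$ is nilpotent, but this inference requires the two-sided ideal $\langle 1-g\rangle$ itself to be nilpotent, not merely its generator. Since $g$ is not central (it fails to commute with $b$: one has $bg=g(a+b)$), the ideal $\langle 1-g\rangle$ is not simply $H(1-g)$, and nilpotency of the element does not propagate to the ideal automatically. The obstacle you flag is real, but the difficulty is not producing a spanning set for $H$; it is the non-normality of $1-g$.

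The paper's remedy is to peel off $a$ first rather than $1-g$. The crucial observation is that $a$ is normal: from $ag=ga$ and $ab=(b-\tfrac12 a)a$ one gets $aH=Ha$, so $\langle a\rangle=Ha$ and hence $\langle a\rangle^n=Ha^n=0$ (since $a^{p^s}=\lambda^{p^{s-1}}(1-g^{p^s})=\lambda^{p^{s-1}}(1-g^n)=0$). Thus $a\in J$ immediately. Passing to $H/\langle a\rangle$, the relation $g^{-1}bg=a+b$ collapses to $\bar g\,\bar b=\bar b\,\bar g$, so the quotient is commutative; then $\bar b^{\,n}=0$ forces $\langle\bar b\rangle^n=0$ and $b\in J$. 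If you wish to salvage your layered approach, replace the inner layer $\langle 1-g\rangle$ by $\langle a\rangle$: normality of $a$ is precisely the structural fact that makes the first step go through.
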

\begin{proof}
(1) Assume $t=1$. Then $n=p^s$. Since char$k$=$p$ and $a^p=\lambda(1-g^p)$, we have
$a^n=a^{p^s}=[\lambda(1-g^p)]^{p^{s-1}}=\lambda ^{p^{s-1}}(1-g^{p^s})=
\lambda^{p^{s-1}}(1-g^n)=0$.
On the other hand, we have $ag=ga$ and $ab=ba-\frac{1}{2}a^2=(b-\frac{1}{2}a)a$.
Hence $aH(\lambda, \mu)=H(\lambda,\mu)a$, and consequently $H(\lambda, \mu)a$ is
equal to the ideal $\langle a\rangle$ of $H(\lambda, \mu)$ generated by $a$.
It follows that $(H(\lambda, \mu)a)^n=H(\lambda, \mu)a^n=0$.
Thus, $H(\lambda, \mu)a\subseteq J$, and so $a\in J$. Similarly, we have $b^n=0$.
Consider the quotient algebra $H(\lambda, \mu)/\langle a\rangle$ of $H(\lambda, \mu)$
modulo $\langle a\rangle$. Then $H(\lambda, \mu)/\langle a\rangle$ is generated,
as an algebra, by $\overline{g}$ and $\overline{b}$. In this case, we have
$\overline{g}\overline{b}=\overline{b}\overline{g}$.
It follows that the ideal $\langle\overline{b}\rangle$ of $H(\lambda, \mu)/\langle a\rangle$
generated by $\overline{b}$ satisfies $\langle\overline{b}\rangle^n=0$.
Therefore, $b\in J$.

(2) Assume $t>1$ and $\lambda\mu\neq0$. Then $g^{p^m}\neq1$ for all $m\geqslant 0$. Hence
$a^{p^m}=\lambda^{p^{m-1}}(1-g^{p^m})\neq0$ for all $m\geqslant 0$. This means
that $a$ is not a nilpotent element, and so $a\notin J$. Similarly,
$b\notin J$.
\end{proof}

\begin{lem}\label{2.5} If $\lambda\neq0$, then $H(\lambda,\mu)\cong
H(1,\lambda^{-1}\mu)$.
\end{lem}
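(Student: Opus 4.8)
The plan is to construct an explicit algebra isomorphism $\phi\colon H(\lambda,\mu)\to H(1,\lambda^{-1}\mu)$ by rescaling the generator $a$ and leaving $g$ essentially alone. Since $H(\lambda,\mu)$ is presented by generators $g,a,b$ subject to the stated relations, it suffices to define $\phi$ on the generators, check that the defining relations of $H(\lambda,\mu)$ are sent to relations that hold in $H(1,\lambda^{-1}\mu)$, and then verify that the inverse rescaling gives a two-sided inverse. First I would pick a scalar $c\in k^\times$ with $c^p=\lambda$ (possible since $k$ is algebraically closed) and set $\phi(g)=g$, $\phi(a)=c a$, $\phi(b)=b$, where on the right-hand side $g,a,b$ denote the generators of $H(1,\lambda^{-1}\mu)$.

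The key step is the relation check. The group-like relations $g^n=1$ and $g^{-1}ag=a$ are obviously preserved. For $g^{-1}bg=a+b$ we would need $\phi(g)^{-1}\phi(b)\phi(g)=\phi(a)+\phi(b)$, i.e. $g^{-1}bg=ca+b$ in $H(1,\lambda^{-1}\mu)$, which forces $c=1$ — so a naive pure rescaling of $a$ alone does not work, and I would instead rescale \emph{both} $a$ and $b$ by the same factor $c$: set $\phi(a)=ca$, $\phi(b)=cb$. Then $g^{-1}(cb)g=c(a+b)=\phi(a)+\phi(b)$ holds. The relation $ba=ab+\tfrac12 a^2$ scales homogeneously: $\phi(b)\phi(a)=c^2 ba=c^2(ab+\tfrac12 a^2)=\phi(a)\phi(b)+\tfrac12\phi(a)^2$. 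For the power relations, $\phi(a)^p=c^p a^p=\lambda\cdot 1\cdot(1-g^p)=\lambda(1-g^p)$ since $a^p=1\cdot(1-g^p)$ in $H(1,\lambda^{-1}\mu)$; and $\phi(b)^p=c^p b^p=\lambda\cdot(\lambda^{-1}\mu)(1-g^p)=\mu(1-g^p)$. Thus all defining relations of $H(\lambda,\mu)$ are respected, so $\phi$ extends to a well-defined algebra homomorphism. The same construction with $c^{-1}$ in place of $c$ gives an algebra homomorphism $\psi\colon H(1,\lambda^{-1}\mu)\to H(\lambda,\mu)$, and $\psi\phi$, $\phi\psi$ fix the generators, hence are identities; therefore $\phi$ is an isomorphism.

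The only genuine subtlety — the ``main obstacle'' — is realizing that one must rescale $a$ and $b$ by the \emph{same} scalar (and not, say, rescale $a$ to kill $\lambda$ and independently rescale $b$ to kill $\mu$), because the relation $g^{-1}bg=a+b$ rigidly couples the two generators; this is also why the target is $H(1,\lambda^{-1}\mu)$ rather than $H(1,1)$ or $H(1,0)$, and why the hypothesis $\lambda\neq 0$ (so that a $p$-th root $c$ exists in $k^\times$) is exactly what is needed. Everything else is a routine verification that homogeneous rescaling is compatible with relations that are themselves homogeneous in $\{a,b\}$ of a consistent degree on each side. One may also note in passing that $\phi$ is automatically a coalgebra map up to the same rescaling — $\Delta(ca)=ca\otimes1+g\otimes ca$ — so it is in fact a Hopf algebra isomorphism, though for the stated lemma only the algebra isomorphism is required.
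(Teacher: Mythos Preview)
Your proof is correct and takes essentially the same approach as the paper: both construct the isomorphism by fixing $g$ and rescaling $a$ and $b$ by the same scalar $c=\lambda^{1/p}$, then verify the defining relations and build the inverse via $c^{-1}$. The paper is slightly more terse (it does not record the false start of rescaling only $a$), but the substance is identical, including the remark that the map is in fact a Hopf algebra isomorphism.
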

\begin{proof}
Assume $\lambda\neq0$.
Let $g$, $a$, $b$ and $g_0$, $a_0$, $b_0$ denote the generators of $H(\lambda,\mu)$ and
$H(1,\lambda^{-1}\mu)$, respectively. Then in $H(1,\lambda^{-1}\mu)$ we have
$g_0^n=1$, $g_0^{-1}(\lambda^{\frac{1}{p}}a_0)g=\lambda^{\frac{1}{p}}a_0$,
$g_0^{-1}(\lambda^{\frac{1}{p}}b_0)g_0=\lambda^{\frac{1}{p}}a_0+\lambda^{\frac{1}{p}}b_0$.
$(\lambda^{\frac{1}{p}}a_0)^p=\lambda(1-g_0^p)$, $(\lambda^{\frac{1}{p}}b_0)^p=\mu(1-g_0^p)$,
and $(\lambda^{\frac{1}{p}}b_0)(\lambda^{\frac{1}{p}}a_0)
=(\lambda^{\frac{1}{p}}a_0)(\lambda^{\frac{1}{p}}b_0)+{\frac{1}{2}}(\lambda^{\frac{1}{p}}a_0)^2$.
It follows that there is an algebra map
$\varphi:H(\lambda,\mu)\rightarrow H(1,\lambda^{-1}\mu)$
such that $\varphi(g)=g_0$, $\varphi(a)=\lambda^{\frac{1}{p}}a_0$ and
$\varphi(b)=\lambda^{\frac{1}{p}}b_0$. It is easy to see that
$\varphi$ is a Hopf algebra homomorphism. Similarly,
there exists a Hopf algebra homomorphism
$\psi: H(1, \lambda^{-1}\mu)\rightarrow H(\lambda, \mu)$
such that $\psi(g_0)=g$, $\psi(a_0)=\lambda^{-\frac{1}{p}}a$ and
$\psi(b_0)=\lambda^{-\frac{1}{p}}b$. Obviously, $\varphi\circ\psi={\rm id}$
and $\psi\circ\varphi={\rm id}$, and so $H(\lambda,\mu)\cong
H(1,\lambda^{-1}\mu)$.
\end{proof}

\section{Simple modules and Projective Modules over $H(\lambda, \mu)$}\label{3}

Throughout this section, assume $p>2$. Let $n=p^st$ with $p\nmid t$ and $s\geqslant 1$.
Let $\xi$ be a $t$-$th$ primitive root of unity in $k$. Let $\lambda, \mu\in k$.
We will investigate simple modules and projective modules
over $H(\lambda, \mu)$ in this section. Note that $kG$ is the coradical of $H(\lambda, \mu)$.

Since $p|n$, we know that $kG$ is not semisimple. It has $t$ non-isomorphic simple modules, which are
all 1-dimensional and given by the corresponding algebra homomorphisms
$\rho_i: kG\rightarrow k$, $\rho_i(g)=\xi^i$, $0\leqslant i\leqslant t-1$.
Moreover, $kG$ has $n$ non-isomorphic indecomposable modules, which can be
described by the matrix representations as follows:
\[
\rho_{r,i}(g)=\left(\begin{array}{ccccc}
\xi^i&1&\cdots&0&0\\
0&\xi^i&\cdots&0&0\\
\cdots&\cdots&\cdots&\cdots&\cdots\\
0&0&\cdots&\xi^i&1\\
0&0&\cdots&0&\xi^i
\end{array}\right)_{r\times r}
\]
where $1\leqslant r\leqslant p^s$ and $0\leqslant i\leqslant t-1$ (see \cite{Don09}).

\begin{theo}\label{3.1}If $t=1$, there is only one simple module $S$
over $H(\lambda,\mu)$, which is $1$-dimensional and given by $g\cdot v=v$, $a\cdot v=0$
and $b\cdot v=0$ for all $v\in S$.
In particular, $H(\lambda,\mu)$ is a local algebra in this case.
\end{theo}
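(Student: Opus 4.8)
The plan is to exploit Lemma \ref{2.4}(1), which already tells us that when $t=1$ (equivalently $n=p^s$) the generators $a$ and $b$ lie in the Jacobson radical $J$ of $H(\lambda,\mu)$. Since $H(\lambda,\mu)/J$ is a semisimple quotient and $a,b\in J$, every simple module $S$ must be annihilated by $a$ and $b$; hence the action factors through the quotient algebra $H(\lambda,\mu)/\langle a,b\rangle$. First I would identify this quotient: killing $a$ and $b$ collapses all the defining relations except $g^n=1$, so $H(\lambda,\mu)/\langle a,b\rangle\cong kG = k\langle g\rangle/(g^n-1)$.

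Next I would analyze $kG$ in characteristic $p$ with $n=p^s$. Here $g^n-1=(g-1)^{p^s}$ in $k[g]$, so $kG\cong k[x]/(x^{p^s})$ where $x=g-1$; this is a local ring with unique maximal ideal $(x)$, and its only simple module is $1$-dimensional with $g$ acting as $1$ (since $x$ acts as $0$). Pulling this back, the unique simple $H(\lambda,\mu)$-module $S$ is $1$-dimensional with $g\cdot v=v$, $a\cdot v=0$, $b\cdot v=0$, which is exactly the asserted description. To conclude that $H(\lambda,\mu)$ is local, I would invoke the standard fact that a finite-dimensional algebra with a unique simple module (up to isomorphism) is local; equivalently, $H(\lambda,\mu)/J\cong k$ because the simple quotient is $1$-dimensional.

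Concretely, I would write: let $S$ be any simple $H(\lambda,\mu)$-module. By Lemma \ref{2.4}(1), $a,b\in J$, so $J\supseteq\langle a,b\rangle$ and $a,b$ act as zero on $S$. Thus $S$ is a simple module over $H(\lambda,\mu)/\langle a,b\rangle\cong kG$, and since $n=p^s$ and $\mathrm{char}\,k=p$, the group algebra $kG\cong k[x]/(x^{p^s})$ is local with unique $1$-dimensional simple module on which $g$ acts trivially. Hence $S$ is $1$-dimensional with $g\cdot v=v$, $a\cdot v=b\cdot v=0$, and it is unique up to isomorphism. Since the unique simple module is $1$-dimensional, $H(\lambda,\mu)/J\cong k$, so $H(\lambda,\mu)$ is local.

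I do not anticipate a serious obstacle here — the result is essentially an immediate corollary of Lemma \ref{2.4}(1) together with the structure of $kG$ for a $p$-group in characteristic $p$. The only point requiring a little care is justifying that $H(\lambda,\mu)/\langle a,b\rangle$ really is isomorphic to $kG$ rather than a proper quotient: one checks that the two-sided ideal $\langle a,b\rangle$ does not force any extra relation on $g$, using that the relations $a^p=\lambda(1-g^p)$, $b^p=\mu(1-g^p)$, $ba=ab+\tfrac12 a^2$, $g^{-1}ag=a$, $g^{-1}bg=a+b$ all lie in $\langle a,b\rangle$ automatically, leaving only $g^n=1$.
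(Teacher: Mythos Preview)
Your proposal is correct and follows essentially the same route as the paper: invoke Lemma~\ref{2.4}(1) to get $a,b\in J$, pass to the quotient $H(\lambda,\mu)/\langle a,b\rangle\cong kG$, and use that $kG$ with $|G|=p^s$ in characteristic $p$ is local with unique (trivial) simple module. The paper's proof is a two-line version of exactly this argument; your write-up simply fills in the details (the identification $kG\cong k[x]/(x^{p^s})$ and the locality conclusion) that the paper leaves implicit, relying on the description of simple $kG$-modules stated at the start of Section~\ref{3}.
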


\begin{proof}
Assume $t=1$. Then by Lemma \ref{2.4}(1), we know that $a,b\in J$,
the Jacobson radical of $H(\lambda, \mu)$, and
$H(\lambda,\mu)/\langle a, b\rangle\cong kG$, where $\langle a,
b\rangle$ is the ideal of $H(\lambda, \mu)$ generated by $a$ and
$b$. Hence the theorem follows.
\end{proof}

In the rest of this section, assume $t>1$.

\begin{lem}\label{3.2}
Let $M$ be an $H(\lambda,\mu)$-module. If there exists an element
$0\neq v\in M$ such that $g\cdot v=\alpha v$ and $a\cdot v=\beta v$
for some $\alpha,\beta\in k$ with $\beta\neq 0$, then the following statements
holds:

$(1)$ If $1\leqslant m\leqslant p-1$, then
$$ab^m \cdot v= \sum_{0\leqslant j\leqslant m}
\alpha_{m,j}b^j\cdot v\ \mbox{ and }\ gb^m\cdot v=\sum_ {0\leqslant j\leqslant m}\beta_{m, j}b^j\cdot v,$$
where $\alpha_{m,j}, \beta_{m, j}\in k$
with $\alpha_{m,m}=\beta$, $\alpha_{m, m-1}=-{\frac{m}{2}}\beta^2$,
$\beta_{m, m}=\alpha$, and $\beta_{m, m-1}=-m\alpha\beta$.

$(2)$ $N ={\rm span}\{v, b\cdot v, \cdots, b^{p-1}\cdot v\}$ is an
submodule of $M$.

$(3)$ $\{v, b\cdot v, \cdots, b^{p-1}\cdot v\}$ are linearly
independent.

$(4)$ Consider the actions of $g$ and $a$ on $N$.
Then $\alpha$ and $\beta$ are the only eigenvalues
of $g$ and $a$, respectively, with multiplicity $p$.
Moreover, $v$ is the unique common eigenvector of $g$ and $a$
up to a non-zero scale multiple.

$(5)$ $N$ is a simple $H(\lambda,\mu)$-module.
\end{lem}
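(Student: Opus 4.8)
The plan is to prove the five statements in order, each leaning on the previous. Part (1) is a direct specialization of Lemma \ref{2.2}(2),(3) at $v$: since $a\cdot v=\beta v$ we have $a^{i+1}\cdot v=\beta^{i+1}v$ and $g a^{i}\cdot v=\alpha\beta^{i}v$, so $ab^{m}\cdot v=\sum_{0\le i\le m}\alpha_{m,i}\beta^{i+1}\,b^{m-i}\cdot v$ and $gb^{m}\cdot v=\sum_{0\le i\le m}\beta_{m,i}\alpha\beta^{i}\,b^{m-i}\cdot v$, where $\alpha_{m,i},\beta_{m,i}$ are the coefficients of Lemma \ref{2.2}. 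Re-indexing by $j=m-i$ and reading off the two top coefficients from $\alpha_{m,0}=\beta_{m,0}=1$, $\alpha_{m,1}=-\tfrac{m}{2}$, $\beta_{m,1}=-m$ gives exactly $\alpha_{m,m}=\beta$, $\alpha_{m,m-1}=-\tfrac{m}{2}\beta^{2}$, $\beta_{m,m}=\alpha$, $\beta_{m,m-1}=-m\alpha\beta$ (the symbols being reused for the new coefficients). Part (2) then follows since $g,a,b$ generate $H(\lambda,\mu)$: the operators $g$ and $a$ carry each $b^{j}\cdot v$ into $N$ by part (1), $b$ sends $b^{j}\cdot v$ to $b^{j+1}\cdot v$ for $j<p-1$, and $b\cdot(b^{p-1}\cdot v)=b^{p}\cdot v=\mu(1-g^{p})\cdot v=\mu(1-\alpha^{p})v\in N$ using $b^{p}=\mu(1-g^{p})$ and $g^{p}\cdot v=\alpha^{p}v$.

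The technical heart is part (3). I would introduce $X=a-\beta$ as an operator on $M$; note $X\cdot v=0$, and part (1) gives $X\cdot(b^{m}\cdot v)=-\tfrac{m}{2}\beta^{2}\,b^{m-1}\cdot v+(\text{a combination of }b^{j}\cdot v,\ j\le m-2)$, with leading coefficient $-\tfrac{m}{2}\beta^{2}\neq0$ for $1\le m\le p-1$ since $m\not\equiv0$ in $k$ and $\beta\neq0$. A short induction then yields $X^{m}\cdot(b^{m}\cdot v)=c_{m}v$ with $c_{m}=\prod_{l=1}^{m}(-\tfrac{l}{2}\beta^{2})\neq0$, while $X^{m}\cdot(b^{j}\cdot v)=0$ whenever $j<m$. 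Hitting a hypothetical nontrivial relation $\sum_{j=0}^{p-1}d_{j}b^{j}\cdot v=0$ with $X^{k}$, where $k$ is the largest index with $d_{k}\neq0$, isolates $d_{k}c_{k}v=0$ and forces $d_{k}=0$ --- a contradiction; hence $\{v,b\cdot v,\dots,b^{p-1}\cdot v\}$ is a basis and $\dim N=p$. The same computation gives part (4): in this ordered basis the matrices of $g$ and $a$ on $N$ are, by part (1), triangular with constant diagonal entries $\alpha$ and $\beta$, so $\alpha$ and $\beta$ are their only eigenvalues, each of multiplicity $p=\dim N$; moreover the vectors $X\cdot(b^{m}\cdot v)$ for $1\le m\le p-1$ have distinct leading terms $-\tfrac{m}{2}\beta^{2}\,b^{m-1}\cdot v$, hence are linearly independent, so $X|_{N}$ has rank $p-1$ and $\ker X|_{N}=kv$; any common eigenvector $w$ of $g$ and $a$ on $N$ must have $a\cdot w=\beta w$, so $w\in kv$.

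For part (5), let $0\neq N'\subseteq N$ be a submodule. Since $g^{-1}ag=a$ the operators $g$ and $a$ commute, and both preserve $N'$, so over the algebraically closed field $k$ they have a common eigenvector $w\in N'$; by part (4), $w\in kv$, so $v\in N'$. Then $N'$ contains $b^{j}\cdot v$ for all $j$, hence contains $\mathrm{span}\{v,b\cdot v,\dots,b^{p-1}\cdot v\}=N$ by parts (2)--(3), giving $N'=N$. So $N$ is simple. I expect part (3) to be the only genuine obstacle: once the triangular shape of the $g$- and $a$-actions with nonvanishing leading and sub-leading coefficients has been pulled out of Lemma \ref{2.2} and part (1), parts (4) and (5) reduce to short linear-algebra arguments whose only external inputs are the commutation $ag=ga$ and the existence of a common eigenvector for commuting operators over $\bar{k}$.
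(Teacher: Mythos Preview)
Your proof is correct and closely tracks the paper's argument: parts (1), (2), and (5) are handled identically, with the same inputs (Lemma~\ref{2.2} for (1), the relation $b^{p}=\mu(1-g^{p})$ for (2), and the common-eigenvector argument for (5)). The only notable variation is in (3). The paper argues by contradiction: it takes the minimal $m$ for which $\{v,b\cdot v,\dots,b^{m+1}\cdot v\}$ becomes dependent, writes $b^{m+1}\cdot v$ as a combination of the lower vectors, applies $a$ to both sides, and reads off a contradiction from the nonvanishing subleading coefficient $-\tfrac{m+1}{2}\beta^{2}$. You instead package the same computation as a lowering operator $X=a-\beta$, showing $X^{m}\cdot(b^{m}\cdot v)=c_{m}v$ with $c_{m}=\prod_{l=1}^{m}\bigl(-\tfrac{l}{2}\beta^{2}\bigr)\neq0$ while $X^{m}$ kills all $b^{j}\cdot v$ with $j<m$, and then hit a putative relation with $X^{k}$. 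This is a clean reformulation that also immediately gives $\mathrm{rk}(X|_{N})=p-1$ and hence $\ker X|_{N}=kv$, which you use for (4); the paper just asserts that (4) follows from (1) and (3) without making the rank computation explicit. Both routes rest on exactly the same key input: the explicit top coefficients $\alpha_{m,m}=\beta$ and $\alpha_{m,m-1}=-\tfrac{m}{2}\beta^{2}$ extracted from Lemma~\ref{2.2}.
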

\begin{proof}
(1) It follows from Parts (2) and (3) of Lemma \ref{2.2}.

(2) Since $b^p=\mu(1-g^p)$, it follows from Part (1).

(3) Suppose that $\{v, b\cdot v, \cdots, b^{p-1}\cdot v\}$
are linearly dependent. Since $v\neq 0$, there exists an $m$ with $0\leqslant m< p-1$
such that $\{v, b\cdot v, \cdots, b^m\cdot v\}$ are linearly independent,
but $\{v, b\cdot v, \cdots, b^m\cdot v, b^{m+1}\cdot v\}$ are linearly
dependent. Hence there are some $\alpha_i\in k$ such that
$b^{m+1}\cdot v=\sum\limits_{0\leqslant i\leqslant m}\alpha_ib^i\cdot v$.
Thus, $ab^{m+1}\cdot v=\sum\limits_{0\leqslant i\leqslant m}\alpha_iab^i\cdot v$.
By Part (1), we have $ab^{m+1} \cdot v= \sum\limits_{0\leqslant j\leqslant m+1}
\alpha_{m+1,j}b^j\cdot v=\beta b^{m+1}\cdot v+ \sum\limits_{0\leqslant j\leqslant m}
\alpha_{m+1,j}b^j\cdot v$ and
$$\begin{array}{rcl}
\sum\limits_{0\leqslant i\leqslant m}\alpha_iab^i\cdot v
&=&\sum\limits_{0\leqslant i\leqslant m}
\sum\limits_{0\leqslant j\leqslant i}\alpha_i\alpha_{i, j}b^j\cdot v\\
&=&\alpha_m\beta b^m\cdot v+\sum\limits_{0\leqslant j\leqslant m-1}\gamma_jb^j\cdot v,\\
\end{array}$$
where $\gamma_j\in k$ for $0\leqslant j\leqslant m-1$.
Hence we have
$$\begin{array}{rcl}
ab^{m+1}\cdot v-\beta b^{m+1}\cdot v&=&\sum\limits_{0\leqslant j\leqslant m}
\alpha_{m+1,j}b^j\cdot v\\
&=&-\frac{m+1}{2}\beta^2b^m\cdot v+\sum\limits_{0\leqslant j\leqslant m-1}
\alpha_{m+1,j}b^j\cdot v\\
\end{array}$$
and
$$a(\sum\limits_{0\leqslant i\leqslant m}\alpha_ib^i\cdot v)-
\beta(\sum\limits_{0\leqslant i\leqslant m}\alpha_ib^i\cdot v)
=\sum\limits_{0\leqslant j\leqslant m-1}(\gamma_j-\alpha_j\beta)b^j\cdot v.$$
It follows that $-\frac{m+1}{2}\beta^2b^m\cdot v+\sum\limits_{0\leqslant j\leqslant m-1}
\alpha_{m+1,j}b^j\cdot v=\sum\limits_{0\leqslant j\leqslant m-1}(\gamma_j-\alpha_j\beta)b^j\cdot v.$
Since $-\frac{m+1}{2}\beta^2\neq0$, one gets that $\{v, b\cdot v, \cdots, b^m\cdot v\}$
are linearly dependent, a contradiction.

(4) It follows from Parts (1) and (3).

(5) Let $N_0$ be a non-zero submodule of $N$. Then $N_0$ must
contain a common eigenvector of $g$ and $a$. Hence $v\in N_0$ by
Part (4), and so $N_0=N$. This shows that $N$ is a simple module.
\end{proof}

Now we will compute simple modules over $H(\lambda,\mu)$. Note that
$H(\lambda, \mu)=\mathcal{B}(V)\# kG$ if $\lambda=\mu=0$.
We first consider the case of $\lambda=0$.

\begin{theo}\label{3.3} Let $\mu\in k$. Then there are $t$ non-isomorphic simple modules $T_i$ over $H(0,\mu)$,
$0\leqslant i\leqslant t-1$. Each $T_i$ is $1$-dimensional and given by
$$g\cdot v=\xi^iv,\  a\cdot v=0,\ b\cdot v=\mu^\frac{1}{p}(1-\xi^i)v,\ v\in T_i.$$
\end{theo}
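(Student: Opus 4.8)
The plan is to show that over $H(0,\mu)$ the algebra relations force the action of $a$ to be nilpotent hence zero on any simple module, then analyze the residual action of $g$ and $b$ on what becomes essentially a $kG$-type situation.

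First I would reduce to understanding what $a$ can do. Since $\lambda=0$, we have $a^p=0$ in $H(0,\mu)$, so $a$ acts nilpotently on any finite-dimensional module $M$. Let $S$ be a simple $H(0,\mu)$-module. If $a$ acted nonzero on $S$, then since $\ker(a|_S)$ is nonzero (as $a$ is nilpotent) and $g$ commutes with $a$ (because $g^{-1}ag=a$), the subspace $\ker(a|_S)$ is $g$-stable; picking inside it an eigenvector $v$ of $g$ (which exists since $k$ is algebraically closed) gives a vector with $g\cdot v=\alpha v$ and $a\cdot v=0$. One then wants to conclude $a$ acts as zero on all of $S$. The clean way: the left ideal $H(0,\mu)a$ equals the two-sided ideal $\langle a\rangle$ — this follows from $ag=ga$ and $ab=(b-\tfrac12 a)a$, exactly as in the proof of Lemma \ref{2.4}(1), so $\langle a\rangle = H(0,\mu)a$, and $\langle a\rangle^p = H(0,\mu)a^p = 0$ since $a^p=0$. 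Thus $\langle a\rangle$ is a nilpotent ideal, hence contained in the Jacobson radical $J$, hence annihilates every simple module. Therefore $a\cdot S=0$, and $S$ is naturally a module over $H(0,\mu)/\langle a\rangle$.

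Next I would identify this quotient. Modulo $\langle a\rangle$ the relations become $\bar g^n=1$, $\bar g^{-1}\bar b\bar g=\bar b$, $\bar b^p=\mu(1-\bar g^p)$, $\bar b\bar a=\bar a\bar b$ (trivial), so $H(0,\mu)/\langle a\rangle$ is the commutative algebra $k[\bar g,\bar b]/(\bar g^n-1,\ \bar b^p-\mu(1-\bar g^p))$. Its simple modules are $1$-dimensional, corresponding to pairs $(\alpha,\gamma)\in k^2$ with $\alpha^n=1$ and $\gamma^p=\mu(1-\alpha^p)$. Writing $\alpha=\xi^i$ for the unique $i\in\{0,\dots,t-1\}$ with the right $p$-power class — actually since $\alpha^n=1$ and $k$ has characteristic $p$ with $n=p^st$, the $n$-th roots of unity in $k$ are exactly the $t$-th roots of unity, so $\alpha=\xi^i$ for a unique $0\le i\le t-1$ — the equation $\gamma^p=\mu(1-\xi^{ip})$ has the unique solution $\gamma=\mu^{1/p}(1-\xi^{ip})^{1/p}=\mu^{1/p}(1-\xi^i)$, using that $x\mapsto x^p$ is a bijection (Frobenius) on $k$ and that $(1-\xi^i)^p = 1-\xi^{ip}$. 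This produces exactly the $t$ one-dimensional modules $T_i$ described in the statement, and they are pairwise non-isomorphic since the scalar $\xi^i$ by which $g$ acts distinguishes them.

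The main obstacle is the argument that $a$ must act as zero, i.e.\ that $\langle a\rangle$ is nilpotent; once that is in hand the rest is the routine classification of simple modules over a finite-dimensional commutative algebra via Frobenius. I expect the nilpotency of $\langle a\rangle$ to go through cleanly by the same mechanism as Lemma \ref{2.4}(1): the key identities $ag=ga$ and $ab=(b-\tfrac12 a)a$ show $H(0,\mu)a$ is already a two-sided ideal, and then $\big(H(0,\mu)a\big)^p = H(0,\mu)a^p = 0$. One small point to verify is that no simple module is lost by passing to the quotient and no spurious one is gained — but that is automatic, since simple $H(0,\mu)$-modules killed by the nilpotent ideal $\langle a\rangle$ are precisely simple $H(0,\mu)/\langle a\rangle$-modules.
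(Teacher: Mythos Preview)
Your proposal is correct and follows essentially the same approach as the paper: show that $\langle a\rangle=H(0,\mu)a$ is a nilpotent ideal (via $ag=ga$, $ab=(b-\tfrac12 a)a$, and $a^p=0$), pass to the commutative quotient $H(0,\mu)/\langle a\rangle$, and classify its one-dimensional simple modules by solving $\alpha^n=1$ and $\gamma^p=\mu(1-\alpha^p)$ using Frobenius. The paper's proof is identical in structure and detail.
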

\begin{proof}
Let $0\leqslant i\leqslant t-1$. Then it is easy to see that
there is an algebra map $\rho_i: H(0, \mu)\rightarrow k$ such that
$\rho_i(g)=\xi^i$, $\rho_i(a)=0$ and $\rho_i(b)=\mu^\frac{1}{p}(1-\xi^i)$.
It follows that $T_0, T_1, \cdots, T_{t-1}$ given in the theorem are non-isomorphic
1-dimensional simple $H(0, \mu)$-modules.

By the proof of Lemma \ref{2.4}(1), one knows that the ideal
$\langle a\rangle$ of $H(0, \mu)$ generated by $a$ is equal to $H(0,
\mu)a=aH(0, \mu)$. Since $a^p=0$, $\langle a\rangle^p=(H(0,
\mu)a)^p=H(0, \mu)a^p=0$. Hence $\langle a\rangle\subseteq J$, the
Jacobson radical of $H(0, \mu)$. Thus, any simple $H(0, \mu)$-module
is a simple module over the quotient algebra $H(0, \mu)/\langle
a\rangle$. However, $H(0, \mu)/\langle a\rangle$ is a commutative
algebra and $k$ is an algebraically closed field. It follows that
any simple $H(0, \mu)$-module is 1-dimensional and determined by an
algebra map from $H(0, \mu)$ to $k$. Now let $\rho: H(0,
\mu)\rightarrow k$ be an algebra map. Then $\rho(a)=0$. Since
$\rho(g)^n=\rho(g^n)=\rho(1)=1$, $\rho(g)=\xi^i$ for some
$0\leqslant i\leqslant t-1$. Since $b^p=\mu(1-g^p)$,
$\rho(b)^p=\mu(1-\rho(g)^p)=\mu(1-\xi^{ip})=(\mu^{\frac{1}{p}}(1-\xi^i))^p$,
and so $\rho(b)=\mu^{\frac{1}{p}}(1-\xi^i)$. Thus, $\rho=\rho_i$.
This completes the proof.
\end{proof}

For the case of $\lambda\neq 0$, by Lemma \ref{2.5}, we may assume
$\lambda=1$. Let $S_0$ be the trivial $H(1, \mu)$-module given by
the counit $\varepsilon: H(1, \mu)\rightarrow k$. Then dim$S_0=1$,
and
$$g\cdot v=v,\ a\cdot v=0,\ b\cdot v=0,\ v\in S_0.$$

Now let $A$ be the subalgebra of $H(1, \mu)$ generated by $g$ and $a$.
Then $A$ is a Hopf subalgebra of $H(1, \mu)$. Hence $H(1, \mu)$ is a free
right (left) $A$-module \cite{Mon93}. Note that $A$ is a commutative algebra.
For $1\leqslant i\leqslant t-1$, there is an algebra map
$\rho_i: A\rightarrow k$ defined by $\rho_i(g)=\xi^i$ and $\rho_i(a)=1-\xi^i$.
Let $X_i$ denote the corresponding left $A$-module. Then dim$X_i=1$,
$g\cdot x=\xi^ix$ and $a\cdot x=(1-\xi^i)x$ for all $x\in X_i$.
Let $S_i=H(1, \mu)\otimes_AX_i$. Then $S_i$ is a non-zero left cyclic
$H(1, \mu)$-module generated by $1\otimes x$, where $0\neq x\in X_i$.

\begin{theo}\label{3.4} Let $0\leqslant i\leqslant t-1$. Then we have

$(1)$ $S_0, S_1, \cdots, S_{t-1}$ are non-isomorphic simple $H(1, \mu)$-modules.

$(2)$ If $i\neq 0$, ${\rm dim}S_i=p$ and there is a $0\neq v\in S_i$ such that
$g\cdot v=\xi^i v$ and
$a\cdot v=(1-\xi^i)v$. Moreover,
$\{v, b\cdot v, \cdots, b^{p-1}\cdot v\}$ is a basis of $S_i$.

$(3)$ If $M$ is a simple $H(1, \mu)$-module, then $M$ is isomorphic to some $S_i$.
\end{theo}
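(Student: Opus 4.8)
The plan is to treat the three parts in the order (2), (1), (3), since the structural information in (2) feeds directly into the other two. For part (2), assume $i\neq 0$. First I would produce the distinguished element: the generator $1\otimes x$ of $S_i=H(1,\mu)\otimes_A X_i$ is a common eigenvector, since $g\cdot(1\otimes x)=\xi^i(1\otimes x)$ and $a\cdot(1\otimes x)=(1-\xi^i)(1\otimes x)$, and because $i\neq 0$ we have $\beta:=1-\xi^i\neq 0$. Now invoke Lemma \ref{3.2} with this $v=1\otimes x$, $\alpha=\xi^i$, $\beta=1-\xi^i$: by parts (2) and (3) of that lemma, $N=\mathrm{span}\{v,b\cdot v,\dots,b^{p-1}\cdot v\}$ is a $p$-dimensional submodule, and by part (5) it is simple. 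It remains to show $N=S_i$, i.e.\ that $\dim S_i=p$. For this I would use that $H(1,\mu)$ is free as a right $A$-module (as noted before the theorem, via \cite{Mon93}): by the PBW-type basis coming from \cite{Cib09}, the monomials $\{b^j : 0\leqslant j\leqslant p-1\}$ form a basis of $H(1,\mu)$ as a free right $A$-module (one checks $b^p=\mu(1-g^p)\in A$ and that $g,a$-monomials times these $b$-powers span), so $\dim S_i=\dim\big(H(1,\mu)\otimes_A X_i\big)=p\cdot\dim X_i=p$. Hence $S_i=N$ is simple of dimension $p$ with the asserted basis.

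For part (1): $S_0$ is $1$-dimensional and every $S_i$ with $i\neq 0$ is $p$-dimensional by part (2), so $S_0\not\cong S_i$ for $i\neq 0$. For $1\leqslant i<j\leqslant t-1$, suppose $S_i\cong S_j$. By Lemma \ref{3.2}(4) applied to $S_i$, the only eigenvalue of $g$ on $S_i$ is $\xi^i$ (with multiplicity $p$); similarly the only eigenvalue of $g$ on $S_j$ is $\xi^j$. An isomorphism would force $\xi^i=\xi^j$, hence $i\equiv j\pmod t$, contradicting $1\leqslant i<j\leqslant t-1$. Therefore $S_0,S_1,\dots,S_{t-1}$ are pairwise non-isomorphic simple modules.

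For part (3), let $M$ be a simple $H(1,\mu)$-module. I would split on whether $a$ acts nilpotently or not. Since $A=k\langle g,a\rangle$ is commutative and $k$ is algebraically closed, $g$ and $a$ have a common eigenvector $0\neq w\in M$, say $g\cdot w=\alpha w$, $a\cdot w=\beta w$; moreover $\alpha^n=1$ forces $\alpha=\xi^i$ for some $0\leqslant i\leqslant t-1$, and then $a^p=\lambda(1-g^p)=1-g^p$ gives $\beta^p=1-\xi^{ip}=(1-\xi^i)^p$, so $\beta=1-\xi^i$. If $i\neq 0$, then $\beta\neq 0$ and Lemma \ref{3.2}(5) shows $N=\mathrm{span}\{w,b\cdot w,\dots,b^{p-1}\cdot w\}$ is a simple submodule of $M$, hence $M=N$; mapping $1\otimes x\mapsto w$ gives a nonzero, hence (by simplicity of both sides and equality of dimensions from part (2)) bijective, $H(1,\mu)$-module map $S_i\to M$, so $M\cong S_i$. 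If $i=0$, then $\beta=0$, so $w$ generates the trivial module: $a\cdot w=0$ and, since $b^p\cdot w=\mu(1-\xi^0)w=0$, the element $b\cdot w$ satisfies $g\cdot(b\cdot w)=bg\cdot w+\,(\text{lower})$ — more directly, one checks $kw$ is itself a submodule after verifying $b\cdot w\in kw$ using the relation $gb=bg-(\text{terms in }a)$ and $a\cdot w=0$, forcing $b\cdot w=0$ by a degree/eigenvalue argument; hence $M=kw\cong S_0$. The main obstacle is this last case $i=0$: showing that a common $g,a$-eigenvector with eigenvalue $(1,0)$ actually spans a submodule, i.e.\ that $b$ must kill it. I expect to handle it by the same Lemma \ref{3.2}-style computation run in the degenerate case $\beta=0$: the vectors $w, b\cdot w, b^2\cdot w,\dots$ span a submodule on which $g$ is unipotent and $a$ is nilpotent, and simplicity of $M$ plus the fact that such a module of $kG$-type is uniserial forces $M$ to be $1$-dimensional, hence $M\cong S_0$.
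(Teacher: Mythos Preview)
Your approach is essentially the paper's. Two remarks.

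For part (2), your dimension count via freeness of $H(1,\mu)$ over $A$ works (the PBW basis $\{g^{i_1}a^{i_2}b^{i_3}\}$ rearranges to $\{b^{i_3}g^{i_1}a^{i_2}\}$ by a triangular change, so $\{1,b,\dots,b^{p-1}\}$ is indeed a right $A$-basis), but the paper avoids it: since $S_i$ is \emph{cyclic} on $v=1\otimes x$ and $N$ is a submodule containing $v$, one has $S_i=H(1,\mu)\cdot v\subseteq N$, hence $S_i=N$ directly.

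The case $i=0$ in part (3) is, as you say, the only real issue, and your sketch is not yet an argument: it is not true in general that $b\cdot w=0$ for the common eigenvector $w$ you start with, so ``$kw$ is a submodule'' is precisely what needs proof, not something to ``check''. The clean fix---which is what your ``uniserial'' remark is reaching for, and is what the paper does---is: since $b^p\cdot w=\mu(1-g^p)\cdot w=0$, choose the largest $m$ with $0\leqslant m\leqslant p-1$ and $b^m\cdot w\neq 0$. By Lemma~\ref{2.2}(2),(3) applied with $g\cdot w=w$ and $a\cdot w=0$, one gets $a\cdot(b^m\cdot w)=0$ and $g\cdot(b^m\cdot w)=b^m\cdot w$; and $b\cdot(b^m\cdot w)=0$ by maximality of $m$. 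Hence $k(b^m\cdot w)$ is a nonzero submodule, so $M=k(b^m\cdot w)\cong S_0$ by simplicity.
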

\begin{proof}
We have already known that $S_0$ is a simple $H(1, \mu)$-module
and dim$S_0=1$. Now let $1\leqslant i\leqslant t-1$ and take $0\neq x\in X_i$.
Let $v=1\otimes x\in S_i$. Then $g\cdot v=\xi^i v$ and
$a\cdot v=(1-\xi^i)v$. Since $S_i$ is a cyclic $H(1, \mu)$-module generated by $v$,
it follows from Lemma \ref{3.2} that $S_i$ is a simple $H(1, \mu)$-module
with dim$S_i=p$. Moreover, $\{v, b\cdot v, \cdots, b^{p-1}\cdot v\}$ is a basis of $S_i$,
and $v$ is the unique common eigenvector of the actions of $g$ and $a$ on $S_i$
up to a non-zero scale multiple. Thus, $S_0, S_1, \cdots, S_{t-1}$ are non-isomorphic
simple $H(1, \mu)$-modules. This shows Parts (1) and (2).

Now let $M$ be a simple $H(1,\mu)$-module. Since $k$ is an
algebraically closed field and $ga=ag$, there is a non-zero vector
$v\in M$ such that $g\cdot v=\alpha v$ and $a\cdot v=\beta v$ for
some $\alpha, \beta\in k$. Hence $A\cdot v=kv$. Since $g^n=1$,
$\alpha^n=\alpha^{p^st}=(\alpha^t)^{p^s}=1$. Hence $\alpha^t=1$, and
consequently $\alpha=\xi^i$ for some $0\leqslant
i\leqslant t-1$. Since $a^p=1-g^p$, we have
$\beta^p=1-\xi^{ip}=(1-\xi^i)^p$. It follows that $\beta=1-\xi^i$.
Since $M$ is a simple $H(1, \mu)$-module and $H(1,
\mu)=\sum\limits_{0\leqslant j\leqslant p-1}b^jA$, one gets that
$M=H(1, \mu)\cdot v={\rm span}\{v, b\cdot v, \cdots, b^{p-1}\cdot
v\}$. We divide the discussion into the following two cases.

For the case: $i=0$. In this case, $g\cdot v=v$, $a\cdot v=0$ and
$b^p\cdot v=\mu(1-g^p)\cdot v=0$. Hence there is an integer $m$ with
$0\leqslant m\leqslant p-1$ such that $b^m\cdot v\neq0$ but
$b^{m+1}\cdot v=0$. If $m=0$, then $g\cdot v=v$, $a\cdot v=0$ and
$b\cdot v=0$. Hence $M=kv\cong S_0$ since $M$ is simple. If $m>0$,
then by Lemma \ref{2.2} it follows that $ab^m\cdot v=0$ and
$gb^m\cdot v=b^m\cdot v$. Thus, $k\{b^m\cdot v\}$ is a non-zero
$H(1,\mu)$-submodule of $M$, and so $M=k(b^m\cdot v)\cong S_0$ since
$M$ is simple. In this case, $v=\gamma b^m\cdot v$ for some $0\neq
\gamma\in k$, which implies that $b\cdot v=0$, and so $m=0$, a
contradiction.

For the case: $1\leqslant i\leqslant t-1$. In this case, $a\cdot
v=(1-\xi^i)v\neq0$. Since $M$ is a simple $H(1, \mu)$-module, it
follows from Lemma \ref{3.2} that $k\{v, b\cdot v, \cdots,
{b^{p-1}\cdot v}\}$ is a basis of $M$. In this case, $M$ is
isomorphic to $S_i$. In fact, let $0\neq x\in X_i$. Then there is an
$A$-module isomorphism $f: X_i\rightarrow kv$, $f(x)=v$, where $kv$
is obviously an $A$-submodule of $M$. Since $M=H(1, \mu)\cdot v$, we
have an $H(1, \mu)$-module epimorphism
$$\psi: S_i=H(1, \mu)\otimes_AX_i\xrightarrow{{\rm id}\otimes f}
H(1, \mu)\otimes_A(kv)\xrightarrow{\cdot}M$$
given by $\psi(h\otimes x)=h\cdot f(x)=h\cdot v$, $h\in H(1, \mu)$.
Since both $S_i$ and $M$ are simple, $\psi$ must be an isomorphism.
\end{proof}

For any integer $i$, let $0\leqslant\overline{i}\leqslant t-1$
with $\overline{i}\equiv i$ (mod $t)$. For any positive integer $m$,
let $I_m$ denote the identity $m\times m$-matrix over $k$. For any matrix $X$ over
$k$, let $r(X)$ denote the rank of $X$.

For $1\leqslant i, j\leqslant t-1$, let $\{b^{i_1}\cdot v\}_{0\leqslant i_1\leqslant
p-1}$ and $\{b^{j_1}\cdot w\}_{0\leqslant j_1\leqslant p-1}$ be the basis of
$S_i$ and $S_j$ as stated in Theorem \ref{3.4}, respectively. Then $\{b^{i_1}\cdot v\otimes
b^{j_1}\cdot w\}_{0\leqslant i_1,j_1\leqslant p-1}$ is a basis of
$S_i\otimes S_j$. For any $0\neq u=\sum
x_{i_1,j_1}b^{i_1}\cdot v \otimes b^{j_1}\cdot w\in S_i\otimes S_j$, let
$h(u)={\rm max}\{i_1+j_1|x_{i_1,j_1}\neq0\}$ and let
$$u(1)={\rm max}\{i_1|x_{i_1,j_1}\neq0 \mbox{ for some $j_1$}\}\mbox{ and }
u(2)={\rm max}\{j_1|x_{u(1),j_1}\neq0\}.$$
With the above notations, we have the following lemma.

\begin{lem}\label{3.5}
Let $0\neq u\in S_i\otimes S_j$ with $h(u)=u(1)=l>0$.
Assume $v_1=g\cdot u-\xi^{i+j}u\neq 0$. Then

$(1)$ $h(v_1)< l$.

$(2)$ If $v_1(2)=0$, then there is an element $u'\in S_i\otimes S_j$
with $h(u')\leqslant l$ and $u'(1)=$
$v_1(1)$ such that $g\cdot u''-\xi^{i+j}u''=0$,
or $(g\cdot u''-\xi^{i+j}u'')(1)<v_1(1)$, where $u''=u+u'$.

$(3)$ If $v_1(2)>0$, then there is an element $u'\in S_i\otimes S_j$
with $h(u')\leqslant l$ and $u'(1)=$
$v_1(1)$ such that $g\cdot u''-\xi^{i+j}u''=0$,
or $(g\cdot u''-\xi^{i+j}u'')(1)<v_1(1)$,
or $(g\cdot u''-$ $\xi^{i+j}u'')(1)=v_1(1)$ and
$(g\cdot u''-\xi^{i+j}u'')(2)<v_1(2)$, where $u''=u+u'$.
\end{lem}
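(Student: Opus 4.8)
The plan is to analyze how the operator $g - \xi^{i+j}$ acts on $S_i \otimes S_j$ in terms of the chosen monomial basis, and then use a triangular-elimination argument on the "leading data" $\big(h(u), u(1), u(2)\big)$ ordered lexicographically. First I would compute $g \cdot (b^{i_1}\cdot v \otimes b^{j_1}\cdot w)$ explicitly: by the coproduct $\triangle(g) = g \otimes g$ we have $g\cdot(x\otimes y) = (g\cdot x)\otimes(g\cdot y)$, and by Lemma \ref{3.2}(1) (the $gb^m$ formula with the $\beta_{m,i}$ coefficients, recalling $\beta_{m,m} = \xi^i$ on $S_i$ and $\beta_{m,m-1} = -m\xi^i(1-\xi^i)$, and similarly on $S_j$) we get
\[
g\cdot(b^{i_1}\cdot v\otimes b^{j_1}\cdot w) = \xi^{i+j}\, b^{i_1}\cdot v\otimes b^{j_1}\cdot w + (\text{lower terms}),
\]
where "lower terms" means a combination of $b^{a}\cdot v\otimes b^{c}\cdot w$ with $a+c < i_1 + j_1$. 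This already gives Part (1): since $g\cdot u - \xi^{i+j}u$ has, in its expansion, only basis vectors $b^a\cdot v\otimes b^c\cdot w$ with $a+c < h(u) = l$, we get $h(v_1) < l$ immediately; I would just need to observe that the top-degree terms $b^{i_1}\cdot v\otimes b^{j_1}\cdot w$ with $i_1 + j_1 = l$ all cancel against $\xi^{i+j}u$.

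For Parts (2) and (3) the idea is to "correct" $u$ by adding a suitable $u'$ so that the leading obstruction $v_1$ is pushed strictly downward in the lex order on $(\,\cdot(1),\cdot(2)\,)$ (while keeping $h$ from growing past $l$). The natural choice is to take $u'$ to be a scalar multiple of a single basis vector, or a small combination, chosen so that when we apply $g - \xi^{i+j}$ to $u' $ the leading term (in the relevant index) matches and cancels the leading term of $v_1$. Concretely, I would look at the coefficient of $g - \xi^{i+j}$ applied to $b^{l}\cdot v \otimes b^{0}\cdot w$ (or more generally $b^{v_1(1)+1}\cdot v\otimes b^{v_1(2)}\cdot w$, adjusted so that the degree bound is respected): using the $\beta$-coefficient computation the term $(g - \xi^{i+j})\cdot(b^{r}\cdot v\otimes b^{s}\cdot w)$ has a well-controlled component in the $b^{r-1}\cdot v\otimes b^{s}\cdot w$ direction (coming from the $\beta_{r,r-1}$ term on the first factor, with a nonzero scalar since $1\le i\le t-1$ forces $1-\xi^i\ne 0$ and since $0 < r \le p-1$ forces $r\not\equiv 0$), plus possibly a $b^{r}\cdot v\otimes b^{s-1}\cdot w$ component. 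In case $v_1(2)=0$ only the first type of term is in play, so a single corrective multiple of $b^{v_1(1)+1}\cdot v\otimes b^{v_1(2)}\cdot w$ kills the leading index, giving Part (2). In case $v_1(2)>0$ the same correction may leave a residual term of the form $b^{v_1(1)}\cdot v\otimes b^{v_1(2)-1}\cdot w$, which is exactly the allowed "third alternative" in the statement of Part (3).

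The main obstacle I anticipate is bookkeeping: I must verify that adding the corrective term $u'$ does not raise $h$ above $l$ and does not reintroduce a term of index $(v_1(1)+1, *)$ or higher after reapplying $g - \xi^{i+j}$. This requires tracking which basis vectors $b^{a}\cdot v\otimes b^{c}\cdot w$ can appear in $(g-\xi^{i+j})\cdot u'$: the degree bound $a+c \le (\deg\text{ of }u') \le l$ handles $h$, and the explicit form of the $\beta$-expansion (only $\beta_{m,m}$, $\beta_{m,m-1}$, and lower, with $\beta_{m,m}$ cancelling in $g-\xi^{i+j}$) shows that the first-index of $(g-\xi^{i+j})\cdot u'$ drops by exactly one from that of $u'$, which is $v_1(1)+1$, landing at $v_1(1)$ — consistent with all three alternatives. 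The subtle point is the interaction between the first-factor and second-factor contributions when $v_1(2) > 0$: I need to choose the scalar in $u'$ to cancel the $b^{v_1(1)}\cdot v\otimes b^{v_1(2)}\cdot w$ coefficient of $v_1$ and then check that everything else is genuinely strictly lower in the lex order. Once the $\beta$-coefficients are written out this is a finite, explicit check, and there are no deeper difficulties; the argument is essentially Gaussian elimination adapted to the filtration by $h$ and the lexicographic order on $(\cdot(1),\cdot(2))$, following the method of \cite{Ch00}.
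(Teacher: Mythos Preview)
Your treatment of Part (1) is fine and matches the paper's one-line reduction to Lemma \ref{3.2}(1). The problem is with your choice of corrective term $u'$ in Parts (2) and (3).

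You propose $u'$ proportional to $b^{v_1(1)+1}\cdot v\otimes b^{v_1(2)}\cdot w$, i.e.\ you raise the \emph{first} index by one. The paper does the opposite: it raises the \emph{second} index, taking $u'$ proportional to $b^{v_1(1)}\cdot v\otimes b^{v_1(2)+1}\cdot w$. This is not a stylistic difference; your choice fails in two ways. First, the lemma explicitly demands $u'(1)=v_1(1)$, whereas your $u'$ has $u'(1)=v_1(1)+1$, so even in case (2) you are not proving the statement as written (and the condition $u'(1)<l$ is exactly what Theorem \ref{3.6} uses to keep $u_q(1)=l$ through the iteration). Second, and more seriously, your claim that ``the first-index of $(g-\xi^{i+j})\cdot u'$ drops by exactly one from that of $u'$'' is false when $u'(2)>0$. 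Writing $r=v_1(1)+1$, $s=v_1(2)$, the expansion of $(g-\xi^{i+j})\cdot(b^{r}\cdot v\otimes b^{s}\cdot w)$ contains, besides the $b^{r-1}\cdot v\otimes b^{s}\cdot w$ term you want, also the term $-s\,\xi^{i+j}(1-\xi^j)\,b^{r}\cdot v\otimes b^{s-1}\cdot w$ coming from the drop in the second tensor factor; this term has first index $r=v_1(1)+1$, it is nonzero (since $1\le s<p$ and $1-\xi^j\ne 0$), and it does not appear in $v_1$. Hence $(g\cdot u''-\xi^{i+j}u'')(1)=v_1(1)+1>v_1(1)$, and none of the three alternatives in (3) holds. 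Your stated residual ``$b^{v_1(1)}\cdot v\otimes b^{v_1(2)-1}\cdot w$'' is off by one in the first index.

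The fix is exactly the paper's move: take $u'=\alpha_s(s+1)^{-1}\xi^{-(i+j)}(1-\xi^j)^{-1}\,b^{m}\cdot v\otimes b^{s+1}\cdot w$ (and, in case (2), $u'=\alpha\,\xi^{-(i+j)}(1-\xi^j)^{-1}\,b^{m}\cdot v\otimes b\cdot w$). Then $u'(1)=m=v_1(1)$ as required, $h(u')=m+s+1\le l$ since $m+s\le h(v_1)<l$, and the only contribution of $(g-\xi^{i+j})\cdot u'$ at first index $m$ is the $b^{m}\cdot v\otimes b^{s}\cdot w$ term (from the second-factor drop), which is tuned to cancel the leading term of $v_1$; everything else has first index strictly below $m$ or first index $m$ with second index strictly below $s$.
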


\begin{proof}
Let $v_1(1)=m$ and $v_1(2)=s$.

(1) It follows from Lemma \ref{3.2}(1).

(2) Assume $s=0$. By Part (1), we have $0\leqslant m<l$. Hence
$v_1=\alpha b^m\cdot v\otimes w+\sum\limits_{i_1<m}\alpha_{i_1, j_1}b^{i_1}\cdot v\otimes b^{j_1}\cdot w$
for some $\alpha$, $\alpha_{i_1, j_1}\in k$ with $\alpha\neq 0$.
Take $u'=\alpha\xi^{-(i+j)}(1-\xi^j)^{-1}b^m\cdot v\otimes b\cdot w$ and let $u''=u+u'$.
Then $h(u')=m+1\leqslant l$, $u'(1)=m$ and
$$g\cdot u'-\xi^{i+j}u'=-\alpha b^m\cdot v\otimes w+\sum\limits_{i_1<m, j_1\leqslant 1}
\beta_{i_1, j_1}b^{i_1}\cdot v\otimes b^{j_1}\cdot w.$$
Since $g\cdot u''-\xi^{i+j}u''=v_1+g\cdot u'-\xi^{i+j}u'$, we know that
$g\cdot u''-\xi^{i+j}u''=0$, or $(g\cdot u''-\xi^{i+j}u'')(1)<m$.

(3) Assume $s>0$. Then
$$v_1=\sum\limits_{0\leqslant j_1\leqslant s}\alpha_{j_1}b^m\cdot v
\otimes b^{j_1}\cdot w+\sum\limits_{i_1<m}\alpha_{i_1, j_1}b^{i_1}\cdot v\otimes b^{j_1}\cdot w$$
for some $\alpha_{j_1}$, $\alpha_{i_1, j_1}\in k$ with $\alpha_s\neq 0$.
Note that $m+s\leqslant h(v_1)<l\leqslant p-1$. Hence $s<p-1$ and so $1<s+1<p$.
Let $u'=\alpha_s(s+1)^{-1}\xi^{-(i+j)}(1-\xi^j)^{-1}b^m\cdot v\otimes b^{s+1}\cdot w$
and $u''=u+u'$. Then $h(u')=m+s+1\leqslant l$, $u'(1)=m$ and
$$g\cdot u'-\xi^{i+j}u'=-\alpha_s b^m\cdot v\otimes b^s\cdot w+\sum\limits_{j_1<s}
\beta_{j_1}b^m\cdot v\otimes b^{j_1}\cdot w+\sum\limits_{i_1<m, j_1\leqslant s+1}
\beta_{i_1, j_1}b^{i_1}\cdot v\otimes b^{j_1}\cdot w.$$
Since $g\cdot u''-\xi^{i+j}u''=v_1+g\cdot u'-\xi^{i+j}u'$, we know that
$g\cdot u''-\xi^{i+j}u''=0$, or $(g\cdot u''-\xi^{i+j}u'')(1)<m$,
or $(g\cdot u''-\xi^{i+j}u'')(1)=m$ and
$(g\cdot u''-\xi^{i+j}u'')(2)<s$.
\end{proof}

\begin{theo}\label{3.6}
Let $0\neq u\in S_i\otimes S_j$ with $h(u)=u(1)=l>0$. If $g\cdot u\neq\xi^{i+j}u$,
then there is an element $\overline{u}\in S_i\otimes S_j$ with
$h(\overline{u})\leqslant l$ and $\overline{u}(1)<l$
such that $g\cdot\underline{u}=\xi^{i+j}\underline{u}$, where $\underline{u}=u+\overline{u}$.
\end{theo}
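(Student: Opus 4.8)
The plan is to iterate Lemma \ref{3.5} in a descent procedure. Give $\mathbb{Z}_{\geqslant 0}\times\mathbb{Z}_{\geqslant 0}$ the lexicographic order, which is a well-ordering, and for each nonzero $w\in S_i\otimes S_j$ set $\Phi(w)=(w(1),w(2))$. The idea is that by adding to $u$ suitable elements of ``height'' $\leqslant l$ with leading first-index $<l$, as supplied by Lemma \ref{3.5}, we can drive the error $g\cdot u-\xi^{i+j}u$ down to $0$, the quantity $\Phi$ of this error strictly decreasing at each step.

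First I would build the sequence. Put $u_0=u$, and suppose $u_k$ has been produced with $h(u_k)=u_k(1)=l$. If $g\cdot u_k=\xi^{i+j}u_k$, stop. Otherwise put $w_k=g\cdot u_k-\xi^{i+j}u_k\neq 0$; by Lemma \ref{3.5}(1) we have $h(w_k)<l$, so in particular $w_k(1)<l$. Apply Lemma \ref{3.5}(2) if $w_k(2)=0$ and Lemma \ref{3.5}(3) if $w_k(2)>0$ to obtain $u_k'\in S_i\otimes S_j$ with $h(u_k')\leqslant l$ and $u_k'(1)=w_k(1)<l$ such that $u_{k+1}:=u_k+u_k'$ satisfies either $g\cdot u_{k+1}=\xi^{i+j}u_{k+1}$, or $\Phi(g\cdot u_{k+1}-\xi^{i+j}u_{k+1})<\Phi(w_k)$ in the lexicographic order. (In the case $w_k(2)=0$ the first coordinate of the error already drops below $w_k(1)$; in the case $w_k(2)>0$ either the first coordinate drops, or it is unchanged and the second coordinate drops.)

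Next I would check that the hypothesis of Lemma \ref{3.5} is inherited by $u_{k+1}$, so the construction can be continued. Since $u_k'(1)<l$, no basis monomial $b^{i_1}\cdot v\otimes b^{j_1}\cdot w$ with $i_1=l$ occurs in $u_k'$, so the coefficient in $u_{k+1}$ of each monomial of $u_k$ with $i_1=l$ is unchanged; as $u_k(1)=l$ this forces $u_{k+1}(1)=l$. Also $h(u_{k+1})\leqslant\max\{h(u_k),h(u_k')\}\leqslant l$ while $h(u_{k+1})\geqslant u_{k+1}(1)=l$, so $h(u_{k+1})=l$. Thus, as long as the procedure has not stopped, $\Phi(w_0)>\Phi(w_1)>\cdots$ is a strictly decreasing sequence in the well-ordered set $(\mathbb{Z}_{\geqslant 0}\times\mathbb{Z}_{\geqslant 0},<)$, so the procedure must stop after finitely many steps, say at $u_N$ with $g\cdot u_N=\xi^{i+j}u_N$.

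Finally, set $\overline{u}=u_N-u=\sum_{k=0}^{N-1}u_k'$ and $\underline{u}=u_N$. Every monomial with nonzero coefficient in $\overline{u}$ has nonzero coefficient in some $u_k'$, hence satisfies $i_1+j_1\leqslant h(u_k')\leqslant l$ and $i_1\leqslant u_k'(1)<l$; therefore $h(\overline{u})\leqslant l$ and, since $\overline{u}\neq 0$ (otherwise $g\cdot u=\xi^{i+j}u$, against the hypothesis), $\overline{u}(1)<l$. Then $\underline{u}=u+\overline{u}$ is the required element. The only step needing genuine care is the bookkeeping that keeps $h(u_k)=u_k(1)=l$ at every stage — so that Lemma \ref{3.5} remains applicable — together with the identification of the lexicographic pair $(w(1),w(2))$ as the quantity that Lemma \ref{3.5}(2),(3) force to descend, which turns termination into a consequence of well-ordering.
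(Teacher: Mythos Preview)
Your proof is correct and follows essentially the same approach as the paper: both iterate Lemma \ref{3.5}, maintain the invariant $h(u_k)=u_k(1)=l$ so the lemma remains applicable, and terminate via descent on the pair $(v_k(1),v_k(2))$. Your use of the lexicographic well-ordering on $\mathbb{Z}_{\geqslant 0}\times\mathbb{Z}_{\geqslant 0}$ is a clean way to phrase the termination argument that the paper carries out more explicitly by hand.
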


\begin{proof}
Let $u_1=u$, $v_1=g\cdot u_1-\xi^{i+j}u_1\neq 0$, $m_1=v_1(1)$ and $s_1=v_1(2)$.
Then it follows from Lemma \ref{3.5} that $m_1<l$ and
there is an elements $u_1'\in S_i\otimes S_j$
with $h(u_1')\leqslant l$ and $u_1'(1)=m_1<l$ such that $g\cdot u_2=\xi^{i+j}u_2$,
or $(g\cdot u_2-\xi^{i+j}u_2)(1)<m_1$,
or $(g\cdot u_2-\xi^{i+j}u_2)(1)=m_1$ and $(g\cdot u_2-\xi^{i+j}u_2)(2)<s_1$, where $u_2=u_1+u_1'$.
If $g\cdot u_2=\xi^{i+j}u_2$, then the theorem follows. Otherwise,
let $v_2=g\cdot u_2-\xi^{i+j}u_2\neq0$, $v_2(1)=m_2$ and $v_2(2)=s_2$. Since $u_1(1)=l$ and $u_1'(1)=m_1<l$,
$u_2(1)=l$, and so $h(u_2)=l$. By replacing $u_1$ with $u_2$, it follows from Lemma \ref{3.5}
that there is an $u_2'\in S_i\otimes S_j$ with $h(u_2')\leqslant l$ and $u_2'(1)=m_2<l$
such that $g\cdot u_3=\xi^{i+j}u_3$, or $(g\cdot u_3-\xi^{i+j}u_3)(1)<m_2$,
or $(g\cdot u_3-\xi^{i+j}u_3)(1)=m_2$ and $(g\cdot u_3-\xi^{i+j}u_3)(2)<s_2$, where $u_3=u_2+u_2'$.
Since $h(u_1')\leqslant l$ and $h(u_2')\leqslant l$, $h(u_1'+u_2')\leqslant l$. Furthermore, we have
$u_2'(1)=m_2<m_1=u_1'(1)$, or $u_2'(1)=m_2=m_1=u_1'(1)$ and $u_2'(2)=s_2<s_1$.
It follows that $(u_1'+u_2')(1)\leqslant m_1<l$. We also have $u_3=u_2+u_2'=u_1+u_1'+u_2'$.
If $g\cdot u_3=\xi^{i+j}u_3$, then the theorem follows. Otherwise,
let $v_3=g\cdot u_3-\xi^{i+j}u_3\neq0$, $v_3(1)=m_3$ and $v_3(2)=s_2$. Since $u_2(1)=l$ and $u_2'(1)=m_2<l$,
$u_3(1)=l$, and so $h(u_3)=l$. Then we may repeat the above procedure by replacing $u_2$ with $u_3$, and continue.
Thus one may get a series of elements $u_1', u_2', u_3', \cdots$ in $S_i\otimes S_j$
with $h(u_q')\leqslant l$ and $u_q'(1)=m_q<l$ such that $g\cdot u_{q+1}=\xi^{i+j}u_{q+1}$,
or $m_{q+1}:=(g\cdot u_{q+1}-\xi^{i+j}u_{q+1})(1)<m_q$,
or $m_{q+1}:=(g\cdot u_{q+1}-\xi^{i+j}u_{q+1})(1)=m_q$ and $s_{q+1}:=(g\cdot u_{q+1}-\xi^{i+j}u_1)(2)<s_q$,
where $u_{q+1}=u_q+u_q'$, $q=1, 2, 3, \cdots$.

We claim that the above procedure will stop.
In fact, if $v_q=g\cdot u_q-\xi^{i+j}u_q\neq 0$ for all $q\geqslant 1$,
then $m_{q+1}<m_q$, or $m_{q+1}=m_q$ and $s_{q+1}<s_q$
for all $q\geqslant 1$. Since $l>m_1\geqslant m_2\geqslant m_3\geqslant\cdots\geqslant 0$,
there is a $q\geqslant 1$ such that $m_q=m_{q+1}=m_{q+2}=\cdots$.
Then it follows that $s_q>s_{q+1}>s_{q+2}>\cdots\geqslant 0$.
This is impossible. Thus, there exists an integer $m\geqslant 1$ such that
$v_q=g\cdot u_q-\xi^{i+j}u_q\neq 0$ for all $1\leqslant q\leqslant m$,
but $g\cdot u_{m+1}-\xi^{i+j}u_{m+1}=0$. Then the theorem follows.
\end{proof}

\begin{theo}\label{3.7}
Let $\{S_i\}_{0\leqslant i\leqslant t-1}$ be the complete set of non-isomorphic simple
$H(1,\mu)$-modules defined in Theorem \ref{3.4}. Then soc$(S_i\bigotimes
S_j)\cong S_{\overline{i+j}}$ and $S_i\bigotimes S_j$ is
indecomposable. In particular, $S_0\otimes S_i\cong S_i$ and
$S_i\otimes S_0\cong S_i$. Here $0\leqslant i, j\leqslant t-1$.
\end{theo}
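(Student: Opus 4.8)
The plan is to show first that $S_i \otimes S_j$ has simple socle isomorphic to $S_{\overline{i+j}}$, since a finite-dimensional module with simple socle over any algebra is automatically indecomposable. To locate the socle, I would look for the common eigenvectors of the commuting operators $g$ and $a$ acting on $S_i \otimes S_j$, because by Theorem \ref{3.4} (and Lemma \ref{3.2}(4)) a simple $H(1,\mu)$-module is pinned down by the eigenvalue pair $(\xi^k, 1-\xi^k)$ of $(g,a)$ on its (essentially unique) common eigenvector, together with the fact that it is then generated by $b$-powers of that vector. So the strategy is: (i) produce a nonzero $\underline{u} \in S_i \otimes S_j$ with $g\cdot\underline{u} = \xi^{i+j}\underline{u}$ and $a\cdot\underline{u} = (1-\xi^{\overline{i+j}})\underline{u}$; (ii) check it generates a copy of $S_{\overline{i+j}}$; (iii) show any nonzero submodule of $S_i \otimes S_j$ contains such a vector, whence the socle is exactly this one simple module.

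For step (i), the coalgebra structure is the essential tool: $\Delta(g) = g \otimes g$ means $g$ acts diagonally, so on the basis vectors $b^{i_1}\cdot v \otimes b^{j_1}\cdot w$ the operator $g$ is upper-triangular (in the ordering by $(i_1,j_1)$) with all diagonal entries... actually with leading term governed by $\xi^{i+j}$, by Lemma \ref{3.2}(1) which expands $gb^m\cdot v$ in lower $b$-powers with top coefficient $\xi^i$. Thus $\xi^{i+j}$ is the only eigenvalue of $g$ on $S_i\otimes S_j$. The content of Theorem \ref{3.6} is precisely that, starting from \emph{any} $u$ with $h(u)=u(1)=l>0$ that is not already a $\xi^{i+j}$-eigenvector, one can correct it by a term of no larger $h$-degree and strictly smaller $u(1)$-index to kill the obstruction; iterating and invoking Theorem \ref{3.6} repeatedly drives $u(1)$ down to $0$, producing an eigenvector $\underline{u}$ with $\underline{u}(1)=0$, i.e. supported on $v\otimes b^{j_1}\cdot w$. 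A short separate argument (the case $l=0$, handled directly as in the proof of Theorem \ref{3.4}) then identifies the $a$-eigenvalue: on the space spanned by $v\otimes b^{j_1}\cdot w$, using $\Delta(a) = a\otimes 1 + g\otimes a$ and $a\cdot v = (1-\xi^i)v$, one computes the $a$-action and extracts the eigenvector, whose $a$-eigenvalue must be $(1-\xi^i)(\text{something}) $ combining to $1-\xi^{\overline{i+j}}$; here the relation $a^p = 1-g^p$ forces the eigenvalue to be a $p$-th root of $1-\xi^{(i+j)p}$, hence $1-\xi^{\overline{i+j}}$. By Lemma \ref{3.2}(5) the submodule generated by $\underline{u}$ is simple of dimension $p$ (or $1$ if $\overline{i+j}=0$), hence $\cong S_{\overline{i+j}}$.

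For step (iii): let $N_0$ be any nonzero submodule. Since $k$ is algebraically closed and $g$ is a single operator, $N_0$ contains a $g$-eigenvector, necessarily with eigenvalue $\xi^{i+j}$; among all such, by the same degree-reduction argument (Theorem \ref{3.6}) applied \emph{inside} $N_0$ — the corrections produced there stay in the span of available basis vectors, but one must check they can be taken inside $N_0$, which follows because $g\cdot u \in N_0$ and $u\in N_0$ force each correction to lie in $N_0$ — one obtains an eigenvector of both $g$ and $a$ in $N_0$. By the eigenvalue computation above it spans (a translate of) $\underline{u}$, and since the simple module it generates is all of $\mathrm{soc}$, we get $\mathrm{soc}(S_i\otimes S_j)\subseteq N_0$; combined with uniqueness of the common eigenvector up to scalar (Lemma \ref{3.2}(4), applied to the simple submodule), the socle is simple and equals $S_{\overline{i+j}}$. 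Finally, the special cases $S_0\otimes S_i$ and $S_i\otimes S_0$: here $\dim(S_0\otimes S_i)=\dim S_i$, the tensor product has a simple submodule $\cong S_{\overline{i}}=S_i$ of the same dimension, hence equals it.

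I expect the main obstacle to be step (iii), specifically the bookkeeping needed to guarantee that the degree-reduction corrections of Theorem \ref{3.6}, which a priori live in the whole module, can be carried out \emph{within} an arbitrary submodule $N_0$; phrasing Theorem \ref{3.6} as ``any $g$-eigenvalue-obstructed vector $u$ can be corrected by an element of $H(1,\mu)\cdot u$'' rather than by an arbitrary element would make this automatic, and I would check the proof of Theorem \ref{3.6} actually delivers this stronger statement (the corrections $u_q'$ are built from $b^m\cdot v\otimes b^{s+1}\cdot w$-type terms and it should be possible to realize them as images of $u$ under algebra elements, or else to rerun the argument abstractly). The eigenvalue computation for $a$ in step (i) is routine but needs care with the $\frac{1}{2}a^2$ term in $ba = ab + \frac{1}{2}a^2$ and with the coproduct of $a$; I would organize it so that $a^p = 1-g^p$ does the final identification, avoiding an explicit messy formula.
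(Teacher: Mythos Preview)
Your overall strategy---reduce to uniqueness of a common $(g,a)$-eigenvector---is the right one, but step~(iii) has a genuine gap, and it is not quite the one you flag. You are correct that the corrections $u'_q$ in Lemma~\ref{3.5}/Theorem~\ref{3.6} are explicit tensor monomials (a scalar times $b^m\cdot v\otimes b^{s+1}\cdot w$), not images of $u$ under algebra elements, so they generally do \emph{not} lie in an arbitrary submodule $N_0$; the rephrasing you propose does not go through. However, you do not need to run the reduction inside $N_0$ at all: since $g$ and $a$ commute and $k$ is algebraically closed, any nonzero $N_0$ already contains a common $(g,a)$-eigenvector. The missing piece is the \emph{global} uniqueness of that common eigenvector in all of $S_i\otimes S_j$. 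Your appeal to Lemma~\ref{3.2}(4) only gives uniqueness inside one fixed simple submodule, not across the whole tensor product; without global uniqueness you cannot conclude that every $N_0$ contains the same simple.

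The paper closes this gap by first determining the entire $g$-eigenspace $V_{\xi^{i+j}}$: a rank estimate on the block upper-triangular matrix of $g$ gives $\dim V_{\xi^{i+j}}\leqslant p$, and Theorem~\ref{3.6} is used (globally, not inside $N_0$) to construct an explicit basis $u_0,\dots,u_{p-1}$ of $V_{\xi^{i+j}}$ with $u_l(1)=l$. One then analyzes $a$ on this $p$-dimensional space and shows, when $i+j\neq t$, that the coefficient of $b^{l-1}\cdot v\otimes w$ in $(a-(1-\xi^{i+j}))\cdot u$ is nonzero for $l\geqslant 1$, so $u_0$ is the unique common eigenvector and the socle is simple. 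A second gap in your outline is the case $i+j=t$: here $1-\xi^{\overline{i+j}}=0$, so Lemma~\ref{3.2} (which requires $\beta\neq 0$) does not apply, and in fact $a$ annihilates all of $V_1$, so common $(g,a)$-eigenvectors are far from unique. The paper handles this by showing $V_1=\mathrm{span}\{b^l\cdot u_0:0\leqslant l\leqslant p-1\}$ with $b$ acting as a shift, so the only vector in $V_1$ also killed by $b$ is $b^{p-1}\cdot u_0$; this pins down $\mathrm{soc}\cong S_0$. Your parenthetical ``(or $1$ if $\overline{i+j}=0$)'' acknowledges the case split but does not supply this argument.
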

\begin{proof}
It is obvious that $S_0\otimes S_i\cong S_i$ and
$S_i\otimes S_0\cong S_i$ for all $0\leqslant i\leqslant t-1$.
Now let $1\leqslant i, j\leqslant t-1$. Let $\{b^{i_1}\cdot v|0\leqslant i_1\leqslant
p-1\}$ and $\{b^{j_1}\cdot w|0\leqslant j_1\leqslant p-1\}$ be the bases of
$S_i$ and $S_j$ as stated in Theorem \ref{3.4}, respectively. Then $\{b^{i_1}\cdot v\otimes
b^{j_1}\cdot w|0\leqslant i_1, j_1\leqslant p-1\}$ is a basis of
$S_i\bigotimes S_j$. By Lemma \ref{3.2}(1), the matrix of the action of $g$ on $S_i\bigotimes S_j$
with respect to the basis
$\{v\otimes w, v\otimes b\cdot w, \cdots, v\otimes b^{p-1}\cdot
w, b\cdot v\otimes w, b\cdot v\otimes b\cdot w, \cdots, b\cdot v\otimes b^{p-1}\cdot
w, \cdots, b^{p-1}\cdot v\otimes w, b^{p-1}\cdot v\otimes b\cdot w,
\cdots, b^{p-1}\cdot v\otimes b^{p-1}\cdot w\}$ has the form
\[
G_0=\left(\begin{array}{cccc}
G_{11}&G_{12}&\cdots&G_{1p}\\
0&G_{22}&\cdots&G_{2p}\\
\cdots&\cdots&\cdots&\cdots\\
0&0&\cdots&G_{pp}
\end{array}\right)
\]
where each $G_{st}$ $(s\leqslant t)$ is a upper triangular $p\times p$-matrix, and $G_{ss}$
has the form
\[
\left(\begin{array}{ccccc}
\\\xi^{i+j}&\alpha_{12}&*&\cdots&*\\
0&\xi^{i+j}&\alpha_{23}&\cdots&*\\
0&0&\xi^{i+j}&\ldots&*\\
\cdots&\cdots&\cdots&\cdots&\cdots\\
0&0&0&\cdots&\xi^{i+j}
\end{array}\right)
\]
with $\alpha_{i_1,i_1+1}\neq 0$.
Hence $\xi^{i+j}$ is the unique eigenvalue of the action of $g$ on $S_i\bigotimes S_j$.
Moreover, $r(\xi^{i+j}I_p-G_{ss})=p-1$. It follows that $r(\xi^{i+j}I_{p^2}-G_0)\geqslant p(p-1)$.
Thus, dim$V_{\xi^{i+j}}\leqslant p$, where $V_{\xi^{i+j}}$ is the eigenspace
of the action of $g$ on $S_i\otimes S_j$.

Obviously, $u_0=v\otimes w\in V_{\xi^{i+j}}$.
For any $1\leqslant l\leqslant p-1$, let $u_{(l)}=b^l\cdot v\otimes w$.
Then $h(u)=u(1)=l>0$. It follows from Lemma \ref{3.2}(1) that $g\cdot
u_{(l)}\neq\xi^{i+j}u_{(l)}$. Then by Theorem \ref{3.6}, there is an element
$u_{(l)}'\in S_i\otimes S_j$ with $h(u_{(l)}')\leqslant l$ and $u_{(l)}'(1)<l$
such that $g\cdot u_l=\xi^{i+j}u_l$, where $u_l=u_{(l)}+u_{(l)}'$.
Obviously, $u_l(1)=l$ and $h(u_l)=l$ for all $0\leqslant l\leqslant p-1$.
It follows that $\{u_0, u_1, \cdots, u_{p-1}\}\subset V_{\xi^{i+j}}$ are linearly independent
over $k$. Thus, $\{u_0, u_1, \cdots, u_{p-1}\}$ is a $k$-basis of $V_{\xi^{i+j}}$.

Let $v_l=g\cdot u_{(l)}-\xi^{i+j}u_{(l)}$. Then it follows from Lemma \ref{3.2} that
$v_l=-l\xi^{i+j}(1-\xi^i)b^{l-1}\cdot v\otimes w
+\sum\limits_{i_1<l-1}\alpha_{i_1}b^{i_1}\cdot v\otimes w$.
Hence $v_l(1)=l-1$ and $v_l(2)=0$. Since $g\cdot u_l-\xi^{i+j}u_l=0$,
$v_l+g\cdot u'_{(l)}-\xi^{i+j}u'_{(l)}=0$. Hence $(g\cdot u'_{(l)}-\xi^{i+j}u'_{(l)})(1)=l-1$
and $(g\cdot u'_{(l)}-\xi^{i+j}u'_{(l)})(2)=0$.
By Lemma \ref{3.2}, we know that $l-1=(g\cdot u'_{(l)}-\xi^{i+j}u'_{(l)})(1)\leqslant u'_{(l)}(1)
<l$, which forces that $u'_{(l)}(1)=l-1$.
Since $u'_{(l)}(1)+u'_{(l)}(2)\leqslant h(u'_{(l)})\leqslant l$, $u'_{(l)}(2)\leqslant 1$.
If $u'_{(l)}(2)=0$, then it follows from Lemma \ref{3.2} that
$l-1=(g\cdot u'_{(l)}-\xi^{i+j}u'_{(l)})(1)< u'_{(l)}(1)=l-1$, a contradiction.
Therefore, $u'_{(l)}(2)=1$, and so $h(u'_{(l)})=l$. Thus we have
$$u'_{(l)}=\alpha b^{l-1}\cdot v\otimes b\cdot w+\beta b^{l-1}\cdot v\otimes w
+\sum\limits_{i_1<l-1}\alpha_{i_1,j_1}b^{i_1}\cdot v\otimes b^{j_1}\cdot w.$$
Again by Lemma \ref{3.2}, one gets
$$g\cdot u'_{(l)}-\xi^{i+j}u'_{(l)}=-\alpha\xi^{i+j}(1-\xi^j) b^{l-1}\cdot v\otimes w
+\sum\limits_{i_1<l-1}\beta_{i_1,j_1}b^{i_1}\cdot v\otimes b^{j_1}\cdot w.$$
Since $v_l+g\cdot u'_{(l)}-\xi^{i+j}u'_{(l)}=0$, $\alpha=-l(1-\xi^i)(1-\xi^j)^{-1}$, and hence
$$u'_{(l)}=-l(1-\xi^i)(1-\xi^j)^{-1}b^{l-1}\cdot v\otimes b\cdot w+\beta b^{l-1}\cdot v\otimes w
+\sum\limits_{i_1<l-1}\alpha_{i_1,j_1}b^{i_1}\cdot v\otimes b^{j_1}\cdot w.$$
Since $ga=ag$, $a\cdot V_{\xi^{i+j}}\subseteq V_{\xi^{i+j}}$. Consider
the action of $a$ on $V_{\xi^{i+j}}$. Then $a\cdot u_0=(1-\xi^{i+j})u_0$.
For $1\leqslant l\leqslant p-1$, let
$u=u_l+\alpha_1u_{l-1}+\ldots+\alpha_lu_0$ be an element in $V_{\xi^{i+j}}$.
If $a\cdot u=\alpha u$ for some $\alpha\in k$, then by comparing their coefficients of
the item $b^l\cdot v\otimes w$, we find that $\alpha=1-\xi^{i+j}$.
It follows that $1-\xi^{i+j}$ is the unique eigenvalue for the action
of $a$ on $V_{\xi^{i+j}}$. Using Lemma \ref{3.2}, one finds that
the coefficient of the item
$b^{l-1}\cdot v\otimes w$ in $a\cdot u-(1-\xi^{i+j})u$ is
$-\frac{l}{2}(1-\xi^i)(1-\xi^{i+j})$. We divide the discussion into the following two
cases.

For case 1: $i+j\neq t$. In this case, $a\cdot u-(1-\xi^{i+j})u\neq0$, and hence
$u$ is not an eigenvector of the action of $a$. It follows that $u_0$
is the unique common eigenvector of the action of $g$ and $a$ up to a non-zero
scale multiple. It follows from Theorem \ref{3.4} that
soc$(S_i\otimes S_j)\cong S_{\overline{i+j}}$.

For case 2: $i+j=t$. In this case, $1$ is the unique eigenvalue of
the action of $g$. It follows from Theorem \ref{3.4} that any simple
submodule of $S_i\otimes S_j$ is isomorphic to $S_0$, and is spanned
by a non-zero vector $v'$ with $g\cdot v'=v'$, $a\cdot v'=0$ and
$b\cdot v'=0$. Now we have $g\cdot u_0=u_0$ and $a\cdot u_0=0$. By
Lemma \ref{2.2}(2), it follows that $g\cdot(b^l\cdot u_0)=b^l\cdot
u_0$ and $a\cdot(b^l\cdot u_0)=0$ for all $1\leqslant l\leqslant
p-1$. Since $\Delta(b)={b\otimes 1}+{g\otimes b}$, one can see that
$(b^l\cdot u_0)(1)=l$, $(b^l\cdot u_0)(2)=0$. It follows that
$\{u_0, b\cdot u_0, \cdots, b^{p-1}\cdot u_0 \}$ are linearly
independent and contained in $V_{\xi^{i+j}}=V_1$. Furthermore,
$b\cdot(b^{p-1}\cdot u_0)=b^p\cdot u_0=0$. Thus, soc$(S_i\otimes
S_j)=k(b^{p-1}\cdot u_0)\cong S_0$.

This completes the proof.
\end{proof}

Now we are going to investigate the indecomposable projective modules over $H(\lambda, \mu)$.

Let $e_i=\frac{1}{t}\sum_{j=0}^{t-1}(\xi^{-ip^s}g^{p^s})^j$. Then
$\{e_0, e_1, \cdots, e_{t-1}\}$ is a set of primitive orthogonal
idempotents in $kG$ since $\xi^{p^s}$ is also a $t$-$th$ primitive
root of unity. Now we have
$(1-\xi^{-ip^s}g^{p^s})e_i=\frac{1}{t}[1-(\xi^{-ip^s}g^{p^s})^t]=0$,
that is, $g^{p^s}e_i=\xi^{ip^s}e_i$. Hence $\{g^{i_1}e_i|0\leqslant
i_1\leqslant p^s-1\}$ is a basis of $kGe_i$ and dim$kGe_i=p^s$.
Under this basis, the matrix of the action of $g$ on $kGe_i$ is
\[
\left(\begin{array}{ccccc}
0&0&\cdots&0&\xi^{ip^s}\\
1&0&\cdots&0&0\\
\cdots&\cdots&\cdots&\cdots&\cdots\\
0&0&\cdots&0&0\\
0&0&\cdots&1&0
\end{array}\right)_{p^s\times p^s}.
\]
The characteristic polynomial of $g$ is
$p(x)=x^{p^s}-\xi^{ip^s}=(x-\xi^i)^{p^s}$. Acting on $kGe_i$, $g$
has a unique eigenvalue $\xi^i$ with multiplicity $p^s$. By Lemma
\ref{2.2}, $g^p\in Z(H(\lambda,\mu))$, the center of
$H(\lambda,\mu)$. Hence $\{e_0, e_1, \cdot, e_{t-1}\}$ is a set of
central orthogonal idempotents of $H(\lambda, \mu)$. It follows that
$H(\lambda,\mu)=\bigoplus_{0\leqslant i\leqslant
t-1}H(\lambda,\mu)e_i$ is a decomposition of the left regular module
$H(\lambda,\mu)$, which is also a composition of $H(\lambda, \mu)$
as two-sided ideals. Thus, the action of $g$ on $H(\lambda,\mu)e_i$
has the unique eigenvalue $\xi^i$ (with multiplicity of $p^{s+2}$).
So $g$ has the unique eigenvalue $\xi^i$ when it acts on every
principal projective module occurring in $H(\lambda,\mu)e_i$.

Note that dim$H(\lambda, \mu)$=dim$(\mathcal{B}(V)\# kG)=p^2n=p^{s+2}t$
and
$$H(\lambda, \mu)e_i={\rm span}\{g^{i_1}a^{i_2}b^{i_3}e_i|
0\leqslant i_1\leqslant p^s-1, 0\leqslant i_2, i_3\leqslant
p-1\}.$$
Hence dim$H(\lambda, \mu)e_i=p^{s+2}$
and $\{g^{i_1}a^{i_2}b^{i_3}e_i|
0\leqslant i_1\leqslant p^s-1, 0\leqslant i_2, i_3\leqslant p-1\}$ is a basis of $H(\lambda,\mu)e_i$.

Now we can prove the main results of this section.

\begin{theo}\label{3.8}
Let $\{T_0, T_1, \cdots, T_{t-1}\}$ be the complete set of
non-isomorphic simple $H(0,\mu)$-modules given in Theorem \ref{3.3}.
Let $P(T_i)$ denote the projective cover of $T_i$. Then $P(T_i)\cong
H(0,\mu)e_i$, where $0\leqslant i\leqslant t-1$.
\end{theo}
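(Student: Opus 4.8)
The plan is to show that $H(0,\mu)e_i$ is an indecomposable projective module with top isomorphic to $T_i$; since $T_i$ is simple and $P(T_i)$ is by definition the projective cover of $T_i$, this will identify $H(0,\mu)e_i$ with $P(T_i)$. First I would note that $e_i$ is a primitive idempotent in $H(0,\mu)$: we already know $\{e_0,\dots,e_{t-1}\}$ is a set of central orthogonal idempotents and that $\{g^{i_1}a^{i_2}b^{i_3}e_i\mid 0\le i_1\le p^s-1,\ 0\le i_2,i_3\le p-1\}$ is a $k$-basis of $H(0,\mu)e_i$, so $\dim H(0,\mu)e_i=p^{s+2}$. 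To see $e_i$ is primitive it suffices to show $H(0,\mu)e_i$ has a simple top, i.e.\ that $H(0,\mu)e_i/J\cdot H(0,\mu)e_i$ is one-dimensional, where $J$ is the Jacobson radical. By Lemma \ref{2.4}(1) (applied with $\lambda=0$) we have $a,b\in J$, and in fact the ideal $\langle a,b\rangle$ is nilpotent modulo which $H(0,\mu)\cong kG$; moreover by the proof of Theorem \ref{3.3} the ideal $\langle a\rangle=H(0,\mu)a$ is contained in $J$, and working modulo $\langle a\rangle$ the element $b$ is central and nilpotent, so $\langle a,b\rangle\subseteq J$. Since $H(0,\mu)/\langle a,b\rangle\cong kG$ and $kGe_i$ is one-dimensional (because $t\nmid$ nothing forces it: actually $kG$ modulo its radical is $k^t$ and the image of $e_i$ is the primitive idempotent cutting out the $\rho_i$-isotypic line), we get that $H(0,\mu)e_i$ has one-dimensional top on which $g$ acts by $\xi^i$, $a$ and $b$ act by $0$ — that is, the top is $T_i$.

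Next I would confirm that $H(0,\mu)e_i$ is projective. This is immediate: $H(0,\mu)=\bigoplus_{j}H(0,\mu)e_j$ as a left module, so each direct summand $H(0,\mu)e_i$ is projective, being a direct summand of the free module $H(0,\mu)$. Combining the two observations, $H(0,\mu)e_i$ is a projective module with simple top $T_i$, hence it is the projective cover $P(T_i)$ by the uniqueness of projective covers over an Artin algebra (recalled in Section \ref{1}).

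The main subtlety — and the step I expect to carry the most weight — is verifying that the top of $H(0,\mu)e_i$ is exactly $T_i$ rather than merely being a module whose composition factors we have not pinned down, and in particular that $H(0,\mu)e_i$ is indecomposable (equivalently $e_i$ primitive in $H(0,\mu)$, not just in $kG$). The cleanest route is the one above: reduce modulo the nilpotent ideal $\langle a,b\rangle$, so $H(0,\mu)e_i$ has the same top as $kGe_i$, and $kGe_i$ is a single indecomposable $kG$-module (the cyclic group algebra $kG$ with $G=\langle g\rangle$ of order $n=p^st$ decomposes as $\bigoplus_i kGe_i$ with each $kGe_i$ uniserial of length $p^s$, hence with simple top $\cong\rho_i=T_i|_{kG}$). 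Therefore $H(0,\mu)e_i$ has simple top $T_i$, it is projective, and so $P(T_i)\cong H(0,\mu)e_i$. As a consistency check one notes $\sum_i (\dim T_i)\dim P(T_i)=\sum_i p^{s+2}=p^{s+2}t=\dim H(0,\mu)$, matching the Wedderburn–Artin decomposition ${}_{H}H\cong\bigoplus_i P(T_i)^{\dim T_i}$ recalled in Section \ref{1}.
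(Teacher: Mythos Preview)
Your argument has a genuine gap when $\mu\neq 0$. You invoke Lemma \ref{2.4}(1) to conclude $a,b\in J$, but that lemma is stated only for $t=1$, whereas Theorem \ref{3.8} lives under the standing hypothesis $t>1$. More seriously, the claim that $\bar b$ is nilpotent in $H(0,\mu)/\langle a\rangle$ is false for $\mu\neq 0$: from $b^p=\mu(1-g^p)$ one gets $\bar b^{p^m}=\mu^{p^{m-1}}(1-\bar g^{p^m})$, and $1-g^{p^m}$ is never nilpotent when $t>1$. Indeed $b\notin J$ in this case, since by Theorem \ref{3.3} the element $b$ acts on $T_i$ by the nonzero scalar $\mu^{1/p}(1-\xi^i)$ for $i\neq 0$; your description of the top as having ``$a$ and $b$ act by $0$'' is therefore incorrect. (Relatedly, $H(0,\mu)/\langle a,b\rangle\not\cong kG$ when $\mu\neq 0$: killing $b$ forces $1-g^p=0$, collapsing the quotient to $k[g]/(g^p-1)$.)

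Your strategy can be salvaged block by block: inside $H(0,\mu)e_i$ the shifted element $b-\mu^{1/p}(1-\xi^i)$ \emph{is} nilpotent, since $(b-\mu^{1/p}(1-\xi^i))^pe_i=\mu(\xi^{ip}-g^p)e_i$ and $(g^p-\xi^{ip})^{p^{s-1}}e_i=(g^{p^s}-\xi^{ip^s})e_i=0$; together with $a$ and $g-\xi^i$ it then generates a nilpotent ideal of the block with one-dimensional quotient, on which $g,a,b$ act by $\xi^i,0,\mu^{1/p}(1-\xi^i)$, i.e.\ the top is $T_i$ as desired. The paper's own proof sidesteps all of this: it observes that $g$ has unique eigenvalue $\xi^i$ on $H(0,\mu)e_i$, so each $P(T_j)$ can occur as a summand only of $H(0,\mu)e_j$; since $\dim T_i=1$ for all $i$, the decomposition ${}_HH\cong\bigoplus_iP(T_i)$ together with $H=\bigoplus_i He_i$ forces $H(0,\mu)e_i\cong P(T_i)$ by dimension.
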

\begin{proof}
Since $\xi^i$ is an eigenvalue of the action of $g$ on
$T_i\cong P(T_i)/{\rm rad}(P(T_i))$, $\xi^i$ is the unique eigenvalue of
the action of $g$ on $P(T_i)$. It follows that $P(T_i$) must be the unique summand of $H(0,\mu)e_i$
up to isomorphism of $H(0, \mu)$-modules.
Since dim$T_i=1$, the left regular module $H(0, \mu)$ has the decomposition
$H(0, \mu)\cong\bigoplus_{0\leqslant i\leqslant t-1}P(T_i)$, which forces that $P(T_i)\cong H(0,\mu)e_i$.
\end{proof}

Now we are going to consider the case of $\lambda=1$. Let us first
show the following lemma for the case of $\mu=0$.

\begin{lem}\label{3.9-1}
In the Hopf algebra $H(1, 0)$, we have
$$b^mab^{p-1}=\frac{m!}{2^m}a^{m+1}b^{p-1},\ \ m\geqslant 0.$$
\end{lem}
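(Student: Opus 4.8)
The plan is to argue by induction on $m$, using only the commutation rule of Lemma~\ref{2.2}(1) together with the defining relation $b^p=\mu(1-g^p)$, which in $H(1,0)$ collapses to $b^p=0$ because $\mu=0$.

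For the base case $m=0$ the asserted identity is just $ab^{p-1}=ab^{p-1}$, so nothing is to prove. For the inductive step I would assume $b^m a b^{p-1}=\frac{m!}{2^m}a^{m+1}b^{p-1}$, multiply on the left by $b$, and push $b$ past $a^{m+1}$ using Lemma~\ref{2.2}(1) in the form $ba^{m+1}=a^{m+1}b+\frac{m+1}{2}a^{m+2}$. This yields
$$b^{m+1}ab^{p-1}=\frac{m!}{2^m}\,ba^{m+1}\,b^{p-1}=\frac{m!}{2^m}\,a^{m+1}b^{p}+\frac{m!}{2^m}\cdot\frac{m+1}{2}\,a^{m+2}b^{p-1}.$$
Since $b^p=0$ in $H(1,0)$, the first summand disappears and the second is exactly $\frac{(m+1)!}{2^{m+1}}a^{m+2}b^{p-1}$, which closes the induction. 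The denominators make sense because $2\in k$ is invertible when $p>2$; and when $m\geqslant p$ both sides vanish, consistent with $m!=0$ in characteristic $p$ and with $b^p=0$.

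There is essentially no obstacle here: the whole content is the observation that the hypothesis $\mu=0$ forces $b^p=0$, and it is precisely this vanishing that eliminates the stray term $a^{m+1}b^p$ produced at each step of the recursion; the commutation identity needed is already available from Lemma~\ref{2.2}(1), so it can be quoted directly without further computation.
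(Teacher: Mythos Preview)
Your proof is correct and essentially identical to the paper's own argument: induction on $m$, with the inductive step using $ba^{m+1}=a^{m+1}b+\frac{m+1}{2}a^{m+2}$ from Lemma~\ref{2.2}(1) and the relation $b^p=0$ in $H(1,0)$ to kill the extraneous term. Your additional remarks about invertibility of $2$ and the vanishing for $m\geqslant p$ are fine but not needed.
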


\begin{proof}
We prove the equation $b^mab^{p-1}=\frac{m!}{2^m}a^{m+1}b^{p-1}$ by
induction on $m$. If $m=0$, it is obvious. Now let $m\geqslant 0$
and assume $b^mab^{p-1}=\frac{m!}{2^m}a^{m+1}b^{p-1}$. Since
$b^p=0$, by Lemma \ref{2.2}(1) we have
$$\begin{array}{rcl}
b^{m+1}ab^{p-1}&=&\frac{m!}{2^m}ba^{m+1}b^{p-1}\\
&=&\frac{m!}{2^m}(a^{m+1}b+\frac{m+1}{2}a^{m+2})b^{p-1}\\
&=&\frac{m!}{2^m}(a^{m+1}b^p+\frac{m+1}{2}a^{m+2}b^{p-1})\\
&=&\frac{(m+1)!}{2^{m+1}}a^{m+2}b^{p-1}.\\
\end{array}$$
This completes the proof.
\end{proof}

\begin{theo}\label{3.9}
Let $\{S_0, S_1, \cdots, S_{t-1}\}$ be the complete set of
non-isomorphic simple $H(1,\mu)$-modules described as Theorem \ref{3.4}.
Let $P(S_i)$ denote the projective cover of $S_i$. Then

$(1)$ $P(S_0)\cong H(1,\mu)e_0$ and ${\rm dim}P(S_0)=p^{s+2}$.

$(2)$ Let $1\leqslant i\leqslant t-1$. Then ${\rm dim}P(S_i)=p^{s+1}$.
Moreover, if $\mu=0$, then
$P(S_i)\cong H(1,0)b^{p-1}e_i$ and
$\{g^{i_1}a^{i_2}b^{p-1}e_i|0\leqslant i_1\leqslant p^s-1, 0\leqslant
i_2\leqslant p-1\}$ is a basis of $H(1, 0)b^{p-1}e_i$.
If $\mu\neq0$ and $s=1$, then $P(S_i)\cong H(1,\mu)b_0^{p-1}e_i$,
and $H(1,\mu)b_0^{p-1}e_i$ has a basis $\{g^{i_1}a^{i_2}b^{p-1}e_i|
0\leqslant i_1\leqslant p^s-1, 0\leqslant i_2\leqslant
p-1\}$, where $b_0=b+\alpha_0$ and $\alpha_0=\mu^{\frac{1}{p}}(\xi^i-1)$.
\end{theo}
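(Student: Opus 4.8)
The plan is to read off the block structure first, and only then to exhibit an explicit copy of $P(S_i)$ inside $H(1,\mu)e_i$. Recall from the discussion preceding the theorem that $H(1,\mu)=\bigoplus_{0\leqslant i\leqslant t-1}H(1,\mu)e_i$ is a decomposition into two-sided ideals on which $g$ acts with the single eigenvalue $\xi^i$, and that $\dim H(1,\mu)e_i=p^{s+2}$. Since $S_0,\dots,S_{t-1}$ have pairwise distinct $g$-eigenvalues $\xi^0,\dots,\xi^{t-1}$ (Theorem \ref{3.4}), the only simple module occurring in the top of $H(1,\mu)e_i$ is $S_i$; hence $H(1,\mu)e_i\cong P(S_i)^{\oplus m_i}$ for some $m_i$, and matching multiplicities of $P(S_i)$ in ${}_HH\cong\bigoplus_j P(S_j)^{\dim S_j}$ with ${}_HH=\bigoplus_i H(1,\mu)e_i$ forces $m_i=\dim S_i$ by Krull--Schmidt. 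For $i=0$ this gives $P(S_0)\cong H(1,\mu)e_0$ with $\dim P(S_0)=p^{s+2}$, which is Part $(1)$; for $1\leqslant i\leqslant t-1$ it gives $\dim P(S_i)=p^{s+2}/\dim S_i=p^{s+1}$, which is the dimension claim of Part $(2)$.

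For the case $\mu=0$, I would first show that the cyclic left ideal $M:=H(1,0)b^{p-1}e_i$ is spanned by $\{g^{i_1}a^{i_2}b^{p-1}e_i\mid 0\leqslant i_1\leqslant p^s-1,\ 0\leqslant i_2\leqslant p-1\}$. Using $b\cdot g^{i_1}a^{i_2}b^{p-1}e_i=(i_1+\tfrac{i_2}{2})g^{i_1}a^{i_2+1}b^{p-1}e_i+g^{i_1}a^{i_2}b^{p}e_i$ from Lemma \ref{2.2}$(1)$ together with $b^p=0$, left multiplication by $b$ keeps this set invariant, and so do $g$ and $a$ (via $a^p=1-g^p$ and $g^{p^s}e_i=\xi^{ip^s}e_i$); thus the set spans $M$, and Lemma \ref{3.9-1} is precisely the explicit iterated form of this collapse of $b$-powers into $a$-powers. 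Since these elements are exactly the $i_3=p-1$ part of the known basis $\{g^{i_1}a^{i_2}b^{i_3}e_i\}$ of $H(1,0)e_i$, they are linearly independent, hence form a basis and $\dim M=p^{s+1}$.

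To finish the $\mu=0$ case I would prove $M\cong P(S_i)$ by showing $M$ has simple top $\cong S_i$. As $M\subseteq H(1,0)e_i$, every simple quotient of $M$ carries $g$-eigenvalue $\xi^i$, hence is $\cong S_i$; so $\operatorname{top}(M)\cong S_i^{\oplus m}$ with $m=\dim_k\operatorname{Hom}_H(M,S_i)$. The generator $z=b^{p-1}e_i$ satisfies $b\cdot z=b^p e_i=0$, so any $\varphi\in\operatorname{Hom}_H(M,S_i)$ sends $z$ into $\{y\in S_i\mid b\cdot y=0\}$. In the basis $\{v,b\cdot v,\dots,b^{p-1}\cdot v\}$ of $S_i$ from Theorem \ref{3.4}, $b$ acts as the regular shift with $b\cdot(b^{p-1}\cdot v)=b^p\cdot v=0$, so this kernel is one-dimensional; hence $m\leqslant1$, and $m=1$ since $M\neq0$. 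Thus $M$ is cyclic with simple top $S_i$, so the projective cover $P(S_i)$ surjects onto $M$; as $\dim P(S_i)=p^{s+1}=\dim M$, the surjection is an isomorphism, giving $P(S_i)\cong H(1,0)b^{p-1}e_i$.

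For $\mu\neq0$ and $s=1$ the same scheme applies after replacing $b$ by $b_0=b+\alpha_0$ with $\alpha_0=\mu^{1/p}(\xi^i-1)$, and produces the corresponding basis $\{g^{i_1}a^{i_2}b_0^{p-1}e_i\}$ of $H(1,\mu)b_0^{p-1}e_i$ (these are the honest basis elements; the listed $g^{i_1}a^{i_2}b^{p-1}e_i$ are their $b$-leading terms). The place where $s=1$ is essential is that then $g^{p}=g^{p^s}$ acts on $H(1,\mu)e_i$ by the scalar $\xi^{ip^s}=\xi^{ip}$, so that $b_0^{p}e_i=\big(\mu(1-g^p)+\alpha_0^{p}\big)e_i=\big(\mu(1-\xi^{ip})+\mu(\xi^{ip}-1)\big)e_i=0$; for $s>1$ this fails, which is why the hypothesis is imposed. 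Since $b_0$ obeys $b_0a=ab_0+\tfrac12a^2$ and $b_0 g^{i_1}a^{i_2}=(i_1+\tfrac{i_2}{2})g^{i_1}a^{i_2+1}+g^{i_1}a^{i_2}b_0$ just as $b$ does, the spanning and independence argument of the second paragraph carries over verbatim with $b$ replaced by $b_0$, giving $\dim H(1,\mu)b_0^{p-1}e_i=p^{s+1}$; and on $S_i$ the operator $b_0$ is a single regular nilpotent block, since $b$ has characteristic polynomial $x^p-\mu(1-\xi^{ip})=(x+\alpha_0)^p$ on $S_i$, so $\{y\in S_i\mid b_0\cdot y=0\}$ is again one-dimensional and the simple-top argument yields $H(1,\mu)b_0^{p-1}e_i\cong P(S_i)$. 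The main obstacle, and the only genuinely delicate point, is this simple-top step: one must check that the single relation $b\cdot z=0$ (resp.\ $b_0\cdot z=0$) already forces $\operatorname{Hom}_H(M,S_i)$ to be one-dimensional, i.e.\ that no further common eigenvector of $g$ and $a$ interferes; everything else is either the block bookkeeping of the first paragraph or a routine computation with the relations of Lemma \ref{2.2}.
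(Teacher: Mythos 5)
Your proof is correct. Part (1) and the dimension count in Part (2) follow the paper's own argument essentially verbatim: the unique $g$-eigenvalue $\xi^i$ on the block $H(1,\mu)e_i$ forces $H(1,\mu)e_i\cong P(S_i)^{\dim S_i}$, whence $\dim P(S_i)=p^{s+2}/\dim S_i$. Where you genuinely diverge is in identifying $P(S_i)$ with $H(1,0)b^{p-1}e_i$ (resp.\ $H(1,\mu)b_0^{p-1}e_i$). The paper uses Lemma \ref{3.9-1} to manufacture an explicit idempotent $\widehat{e_i}=\alpha^{-1}a^{p^s-p+1}b^{p-1}e_i$ with $\alpha=\frac{(p-1)!}{2^{p-1}}(1-\xi^{ip^s})\neq 0$, so that $H(1,0)\widehat{e_i}$ is a direct summand of $H(1,0)e_i$, hence a nonzero sum of copies of $P(S_i)$; the chain $p^{s+1}=\dim P(S_i)\leqslant\dim H(1,0)\widehat{e_i}\leqslant\dim H(1,0)b^{p-1}e_i\leqslant p^{s+1}$ then yields the isomorphism and the basis simultaneously, with only the spanning inequality needed as input. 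You instead compute $\dim H(1,0)b^{p-1}e_i=p^{s+1}$ outright (spanning via Lemma \ref{2.2}(1) and $b^pe_i=0$, independence from the basis of $H(1,\mu)e_i$ recorded before Theorem \ref{3.8}), and then argue that this cyclic module has simple top $S_i$ because its generator is killed by $b$ and $\ker(b|_{S_i})$ (resp.\ $\ker(b_0|_{S_i})$) is one-dimensional, so a surjection from $P(S_i)$ in equal dimension is an isomorphism. Both routes are sound: yours dispenses with Lemma \ref{3.9-1} and the idempotent computation entirely, at the cost of needing the linear independence up front and the check that $b$ (resp.\ $b_0$) acts as a regular nilpotent (resp.\ cyclic) operator on $S_i$; the paper's idempotent buys projectivity of the submodule for free and never has to examine the top. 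Your explanation of why $s=1$ is needed ($b_0^pe_i=\mu(\xi^{ip}-g^p)e_i=0$ only then) matches the paper's computation, and your remark that the honest basis consists of the elements $g^{i_1}a^{i_2}b_0^{p-1}e_i$, of which the $g^{i_1}a^{i_2}b^{p-1}e_i$ listed in the statement are the leading terms, agrees with what the paper's proof actually establishes.
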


\begin{proof}
(1) Since $\xi^i$ is an eigenvalue of the action of $g$ on
$S_i=P(S_i)/{\rm rad}(P(S_i))$, $\xi^i$ is the unique eigenvalue of the action of $g$ on
$P(S_i)$. It follows that $P(S_i$) must be the unique summand of $H(1,\mu)e_i$
up to the isomorphism of $H(1, \mu)$-modules. By
Wedderburn-Artin Theorem, the left regular module $H(1, \mu)$ has the decomposition
$H(1, \mu)\cong\bigoplus_{0\leqslant i\leqslant t-1}P(S_i)^{{\rm dim}S_i}$,
where $P(S_i)^m$ denotes the direct sum of $m$ copies of $P(S_i)$.
It follows that $H(1, \mu)e_i\cong P(S_i)^{{\rm dim}S_i}$ as left $H(1, \mu)$-modules.
Since dim$S_0=1$, one gets that $P(S_0)\cong H(1,\mu)e_0$ and dim$P(S_0)=p^{s+2}$.

(2) Let $1\leqslant i\leqslant t-1$. Since dim$S_i=p$ and dim$H(1, \mu)e_i=p^{s+2}$,
$H(1,\mu)e_i\cong P(S_i)^p$, the direct sum of $p$ copies of $P(S_i)$. Hence dim$P(S_i)=p^{s+1}$.

Assume $\mu=0$. Then by Lemma \ref{3.9-1} we have
$b^{p-1}ab^{p-1}=\frac{(p-1)!}{2^{p-1}}a^pb^{p-1}$. Let
$\widetilde{e_i}=a^{p^s-p+1}b^{p-1}e_i$. Since $a^p=1-g^p$ and $g^p\in Z(H(1, 0))$, we have
$a^p\in Z(H(1, 0))$. Therefore, we have
\begin{equation*}
\begin{split}
\widetilde{e_i}^2&=a^{p^s-p+1}b^{p-1}a^{p^s-p+1}b^{p-1}e_i\\
&=a^{2(p^s-p)+1}b^{p-1}ab^{p-1}e_i\\
&=\frac{(p-1)!}{2^{p-1}}a^{2(p^s-p)+1}a^pb^{p-1}e_i\\
&=\frac{(p-1)!}{2^{p-1}}a^{p^s-p+1}a^{p^s}b^{p-1}e_i\\
&=\frac{(p-1)!}{2^{p-1}}a^{p^s-p+1}b^{p-1}(1-g^{p^s})e_i\\
&=\frac{(p-1)!}{2^{p-1}}(1-\xi^{ip^s})a^{p^s-p+1}b^{p-1}e_i\\
&=\frac{(p-1)!}{2^{p-1}}(1-\xi^{ip^s})\widetilde{e_i}.
\end{split}
\end{equation*}
Then $\widetilde{e_i}^2=\alpha\widetilde{e_i}$ with
$\alpha=\frac{(p-1)!}{2^{p-1}}(1-\xi^{ip^s})\neq0$ in $k$. Let
$\widehat{e_i}=\alpha^{-1}\widetilde{e_i}$. Then
$\widehat{e_i}^2=\widehat{e_i}$. Hence $H(1, 0)\widehat{e_i}$ is a
summand of $H(1, 0)e_i$ as a left $H(1, 0)$-module. It follows that
$H(1, 0)\widehat{e_i}\cong P(S_i)^m$ for some $1\leqslant
m\leqslant{\rm dim}S_i$. Obviously, $H(1, 0)\widehat{e_i}\subseteq
H(1, 0)b^{p-1}e_i$. Since $a^p=1-g^p$ and $b^p=0$, it follows from
Lemma \ref{2.2}(1) that $H(1, 0)b^{p-1}e_i={\rm
span}\{g^{i_1}a^{i_2}b^{p-1}e_i|0\leqslant i_1\leqslant p^s-1,
0\leqslant i_2\leqslant p-1\}$. Hence $p^{s+1}={\rm dim}P(S_i)
\leqslant{\rm dim}(H(1,0)\widehat{e_i})\leqslant{\rm
dim}(H(1,0)b^{p-1}e_i) \leqslant p^{s+1}$. This implies that ${\rm
dim}(H(1,0)\widehat{e_i})={\rm dim} (H(1, 0)b^{p-1}e_i)=p^{s+1}$.
Hence $P(S_i)\cong H(1,0)\widehat{e_i}=H(1, 0)b^{p-1}e_i$, and
consequently $H(1,0)b^{p-1}e_i$ has a basis
$\{g^{i_1}a^{i_2}b^{p-1}e_i|0\leqslant i_1\leqslant p^s-1,
0\leqslant i_2\leqslant p-1\}$.

Now assume $\mu\neq0$ and $s=1$. Let $\alpha_0=\mu^{\frac{1}{p}}(\xi^i-1)\in k\subseteq H(1, \mu)$
and $b_0=b+\alpha_0\in H(1,\mu)$. Then
$b_0^p=\mu(\xi^{ip}-g^p)=\mu(\xi^i-g)^p$, and so $b_0^pe_i=0$.
Since $g^p\in Z(H(1,\mu))$, $b_0^p\in
Z(H(1,\mu))$. An argument similar to Lemma \ref{3.9-1} shows that
$b_0^mab_0^{p-1}e_i=\frac{m!}{2^m}a^{m+1}b_0^{p-1}e_i$ for all
$m\geqslant 0$. Let $e'_i=\frac{2^{p-1}}{(p-1)!}(1-\xi^{ip})^{-1}ab_0^{p-1}e_i$.
Then it follows from an argument similar to the case of $\mu=0$ that
$(e'_i)^2=e'_i$, $P(S_i)\cong H(1,\mu)e'_i=H(1,\mu)b_0^{p-1}e_i$ and
$\{g^{i_1}a^{i_2}b_0^{p-1}e_i|0\leqslant i_1\leqslant p^s-1, 0\leqslant i_2\leqslant
 p-1\}$ is a basis of $H(1, \mu)b_0^{p-1}e_i$.
\end{proof}
\begin{rem}If $p=3,5,7,11$, we find that $b_1^p=[b+\mu^{\frac{1}{p}}(g-1)]^p=0$.
Then the argument in the proof of Theorem \ref{3.9} can be applied to $H(1,\mu)$
with $\mu\neq0$ and $s\geqslant 1$. In this case,
we have that $P(S_i)\cong H(1,\mu)b_1^{p-1}e_i$ and $\{g^{i_1}a^{i_2}b_1^{p-1}e_i|
0\leqslant i_1\leqslant p^s-1, 0\leqslant i_2\leqslant p-1\}$
is a basis of $H(1, \mu)b_1^{p-1}e_i$, where $1\leqslant i\leqslant t-1$.
\end{rem}

\begin{col}\label{3.11} If $t>1$, then $\{e_0, e_1, \cdots, e_{t-1}\}$ is a set of central orthogonal
primitive idempotents of $H(\lambda,\mu)$.
\end{col}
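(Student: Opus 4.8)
The claim is that $\{e_0,\dots,e_{t-1}\}$ is a set of central orthogonal primitive idempotents of $H(\lambda,\mu)$. Most of this is already in hand: it was shown just before Theorem~\ref{3.8} that each $e_i$ is central in $H(\lambda,\mu)$ (because $g^p\in Z(H(\lambda,\mu))$ by Lemma~\ref{2.2}(1) and the $e_i$ are polynomials in $g^{p^s}=(g^p)^{p^{s-1}}$), that they are orthogonal, and that $\sum_i e_i=1$, since these properties already hold in $kG$ and the $e_i$ lie in $kG$. So the only thing left to prove is \emph{primitivity} of each $e_i$ as an idempotent of $H(\lambda,\mu)$ — note that primitivity in $kG$ does not automatically give primitivity in the larger algebra.

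The plan is to show that $H(\lambda,\mu)e_i$ is an indecomposable left $H(\lambda,\mu)$-module, which is equivalent to $e_i$ being primitive. The key tool is the eigenvalue argument already used in the paragraph preceding Theorem~\ref{3.8}: the action of $g$ on $H(\lambda,\mu)e_i$ has the single eigenvalue $\xi^i$. I would split into the two cases handled in the theorems above. First, if $\lambda=0$: by Theorem~\ref{3.8}, $P(T_i)\cong H(0,\mu)e_i$, and a projective cover of a simple module is indecomposable; hence $e_i$ is primitive. Second, if $\lambda\neq0$: by Lemma~\ref{2.5} we may assume $\lambda=1$, and then by the proof of Theorem~\ref{3.9} we have $H(1,\mu)e_i\cong P(S_i)^{\dim S_i}$ — which for $i\neq 0$ is $P(S_i)^p$, a sum of $p>1$ copies and hence \emph{not} indecomposable. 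So for $\lambda\neq 0$ and $i\neq 0$ the idempotent $e_i$ is in fact not primitive, and the corollary as literally stated would be false in that case.

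Given this, I expect the intended reading is that the hypothesis $t>1$ is accompanied by the standing assumption of the relevant subsection, or that "primitive" here is being used loosely; the cleanest correct statement one can actually prove is: \emph{$\{e_0,\dots,e_{t-1}\}$ is a set of central orthogonal idempotents with $\sum_i e_i = 1$, and each $e_i$ is primitive in $H(0,\mu)$ (equivalently when $\lambda=0$), while for $\lambda\neq 0$ only $e_0$ is primitive.} The proof of the part that does hold is then immediate from the three facts quoted above plus Theorem~\ref{3.8} (for $\lambda=0$) or Theorem~\ref{3.9}(1) (for the block $e_0$ when $\lambda\neq 0$, since $P(S_0)\cong H(1,\mu)e_0$ and $P(S_0)$ is indecomposable). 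The main obstacle is thus not a computation but rather this discrepancy: one must decide whether to (a) restrict the statement to $H(0,\mu)$, or (b) interpret "primitive" as "central primitive" (i.e. block idempotent), in which case indecomposability of $H(\lambda,\mu)e_i$ as a \emph{two-sided ideal} is what is needed, and that does follow from the single-eigenvalue observation together with the block decomposition $H(\lambda,\mu)=\bigoplus_i H(\lambda,\mu)e_i$ displayed before Theorem~\ref{3.8}. I would adopt reading (b): each $e_i$ is a primitive central idempotent because the $H(\lambda,\mu)e_i$ are pairwise non-isomorphic indecomposable two-sided ideals, the indecomposability following since any further central idempotent splitting $H(\lambda,\mu)e_i$ would split the $g$-eigenspace decomposition, contradicting that $g$ acts on $H(\lambda,\mu)e_i$ with the unique eigenvalue $\xi^i$.
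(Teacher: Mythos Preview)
Your diagnosis is correct and worth recording: the paper states this corollary without proof, and the phrase ``central orthogonal primitive idempotents'' must be read as ``orthogonal primitive \emph{central} idempotents'' (i.e., block idempotents). As you observe, for $\lambda\neq 0$ and $1\leqslant i\leqslant t-1$ one has $H(1,\mu)e_i\cong P(S_i)^p$ with $p>1$ by the proof of Theorem~\ref{3.9}, so $e_i$ is not a primitive one-sided idempotent. That reading (b) is the intended one is confirmed by the very next corollary and by Section~\ref{4}, which systematically call the $H(\lambda,\mu)e_i$ ``blocks''.

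There is, however, a genuine gap in your final sentence. You claim indecomposability of $H(\lambda,\mu)e_i$ as a two-sided ideal ``since any further central idempotent splitting $H(\lambda,\mu)e_i$ would split the $g$-eigenspace decomposition, contradicting that $g$ acts on $H(\lambda,\mu)e_i$ with the unique eigenvalue $\xi^i$''. But a single $g$-eigenvalue does not by itself rule out a ring decomposition: e.g.\ $k[x]/(x^2)\times k[x]/(x^2)$ has $x$ acting with the sole eigenvalue $0$ yet is decomposable. The argument that actually works uses the ingredients you already assembled. By Theorems~\ref{3.3} and~\ref{3.4}, together with the eigenvalue observation preceding Theorem~\ref{3.8}, the ring $H(\lambda,\mu)e_i$ has \emph{exactly one} isomorphism class of simple modules (namely $T_i$ if $\lambda=0$, and $S_i$ if $\lambda\neq 0$, after reducing to $\lambda=1$ via Lemma~\ref{2.5}). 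If $e_i=f_1+f_2$ with $f_1,f_2$ nonzero orthogonal central idempotents, then each nonzero ring $H(\lambda,\mu)f_j$ carries at least one simple module, and since $f_1$ acts as the identity on one and as zero on the other these are non-isomorphic as $H(\lambda,\mu)e_i$-modules, contradicting uniqueness. Hence each $e_i$ is a primitive central idempotent, which is what the paper is asserting.
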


\begin{col}\label{3.11} If $t>1$, then each block $H(\lambda,\mu)e_i$ of $H(\lambda,\mu)$ is a
symmetric algebra. Moreover,
$H(\lambda,\mu)e_0$ is a local symmetric algebra.
\begin{proof}
It follows from Lemma \ref{2.3} and \cite[Lemma I.3.3]{Erd90}
\end{proof}
\end{col}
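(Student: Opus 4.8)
The plan is to deduce Corollary \ref{3.11} (the second one, about blocks being symmetric algebras) from Lemma \ref{2.3} together with standard facts about idempotent subalgebras of symmetric algebras. First I would recall that by Lemma \ref{2.3}, $H(\lambda,\mu)$ is a symmetric algebra, so there is a nondegenerate associative symmetric bilinear form $\beta$ on $H(\lambda,\mu)$. Then I would invoke the fact that for a central idempotent $e$ of a symmetric algebra $A$, the algebra $eAe = Ae$ is again symmetric, with the form obtained by restriction; this is exactly \cite[Lemma I.3.3]{Erd90}. Since $\{e_0,\ldots,e_{t-1}\}$ is a set of central orthogonal primitive idempotents of $H(\lambda,\mu)$ (established in the preceding corollary, using that $g^p$ is central by Lemma \ref{2.2}), each $H(\lambda,\mu)e_i = e_iH(\lambda,\mu)e_i$ is a symmetric algebra.

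For the second assertion, that $H(\lambda,\mu)e_0$ is a local symmetric algebra: it is symmetric by the first part, so it remains to check locality. I would argue that $H(\lambda,\mu)e_0$ has a unique simple module up to isomorphism. Indeed, from the block decomposition $H(\lambda,\mu) = \bigoplus_i H(\lambda,\mu)e_i$, each $e_i$ being central and primitive, the simple modules lying in the block $H(\lambda,\mu)e_0$ are precisely those on which $g^{p^s}$ acts as $\xi^{0\cdot p^s} = 1$; combined with the classification of simple modules (Theorem \ref{3.3} for $\lambda=0$, or Theorems \ref{3.1} and \ref{3.4} together with Lemma \ref{2.5} for $\lambda\neq 0$), the only such simple is $T_0 = S_0$, the $1$-dimensional trivial-type module. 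A finite-dimensional algebra with a unique simple module of dimension one (equivalently, whose semisimple quotient is $k$) is local. Hence $H(\lambda,\mu)e_0$ is local.

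Alternatively, and perhaps more cleanly, I would observe that since $e_i$ is a \emph{primitive} central idempotent, $H(\lambda,\mu)e_0$ is an indecomposable algebra; for $i=0$ one further checks from the explicit basis $\{g^{i_1}a^{i_2}b^{i_3}e_0\}$ and the module classification that $P(S_0) \cong H(\lambda,\mu)e_0$ has a unique composition factor $S_0$, so $H(\lambda,\mu)e_0/\mathrm{rad}(H(\lambda,\mu)e_0) \cong k$, giving locality directly. The main obstacle, such as it is, is purely bookkeeping: one must be careful that the cited \cite[Lemma I.3.3]{Erd90} really does give symmetry of the corner algebra $eAe$ for a \emph{central} idempotent (it does, since then $eAe=eA=Ae$ is a direct factor and the restricted form stays nondegenerate), and that the identification of $S_0$ as the only simple in the block $e_0$ is consistent across the cases $\lambda = 0$ and $\lambda \neq 0$. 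No delicate computation is needed; the content is entirely a matter of assembling Lemma \ref{2.3}, the idempotent corollary, and the standard homological algebra of symmetric algebras.
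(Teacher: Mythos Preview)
Your proposal is correct and follows essentially the same approach as the paper: invoke Lemma~\ref{2.3} for the symmetry of $H(\lambda,\mu)$ and then \cite[Lemma I.3.3]{Erd90} to pass to each block $H(\lambda,\mu)e_i$, with locality of $H(\lambda,\mu)e_0$ coming from the classification of simple modules in Theorems~\ref{3.3} and~\ref{3.4}. You have simply spelled out in more detail what the paper's one-line proof leaves implicit.
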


\section{Representation types of $\mathcal{B}(V)\# kG$ and $H(\lambda,\mu)$}\label{4}

In this section, we will consider the representation types of
$\mathcal{B}(V)\# kG$ and $H(\lambda,\mu)$. Let us first consider
the simple modules and their projective covers over $\mathcal{B}(V)\# kG$.
When $p>2$, $\mathcal{B}(V)\# kG=H(0, 0)$ as noted in the last section.
In this case, the simple modules and their projective covers over $\mathcal{B}(V)\# kG$
have been described in the last section, see Theorems \ref{3.3} and \ref{3.8}.

Now let us assume $p=2$ and $n=2^st$ with $2\nmid t$ and $s\geqslant 1$. Let $\xi$
be a $t$-$th$ primitive root of unity in $k$. We denote by $H$ the
Hopf algebra $\mathcal{B}(V)\# kG$ defined in Section \ref{2}.

Since $H$ is a finite dimensional graded Hopf algebra $H=\bigoplus_{m\geqslant 0}H_m$
with $H_0=kG$ and $a, b\in H_1$, a left $H$-module $M$ is a simple $H$-module
if and only if $M$ is a simple $kG$-module and $a\cdot M=b\cdot M=0$.
Hence we have the following proposition.

\begin{prop}\label{4.1}
Up to isomorphism, there are $t$ simple left $H$-modules $S_i$,
which are all 1-dimensional and defined by
$$g\cdot x=\xi^ix,\ a\cdot x=b\cdot x=0,\ x\in S_i,$$
where $0\leqslant i\leqslant t-1$. In particular,
if $t=1$, then $H$ is a local algebra.
\end{prop}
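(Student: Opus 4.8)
The plan is to exploit the grading of $H=\mathcal{B}(V)\# kG=\bigoplus_{m\geq 0}H_m$ together with the fact, already noted in the paragraph preceding the statement, that $H_0=kG$ and $a,b\in H_1$. First I would observe that the graded Jacobson radical $J_{\mathrm{gr}}=\bigoplus_{m\geq 1}H_m$ is a two-sided ideal, and since $a,b$ are nilpotent (indeed $a^2=0$, $b^4=0$ when $p=2$) and generate $J_{\mathrm{gr}}$ together with the $kG$-action, $J_{\mathrm{gr}}$ is a nilpotent ideal; hence $J_{\mathrm{gr}}\subseteq J(H)$, the ordinary Jacobson radical. On the other hand $H/J_{\mathrm{gr}}\cong kG$, and since $p=2$ divides $n$, $kG=k[g]/(g^n-1)=k[g]/((g-1)^{\cdots})$ after factoring out the $t$-part; more precisely $kG$ has exactly $t$ simple modules because $n=2^st$ with $t$ odd forces $X^n-1=\prod_{i=0}^{t-1}(X-\xi^i)^{2^s}$ over $k$. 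So the simple $H$-modules are exactly the simple $kG$-modules inflated along $H\twoheadrightarrow kG$, which are the $t$ one-dimensional modules $\rho_i(g)=\xi^i$ recalled at the start of Section~3.

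Next I would translate this into the module-theoretic statement in the proposition. A left $H$-module $M$ is simple iff $M$ is annihilated by $J(H)$ and is simple over $H/J(H)$. Since $J_{\mathrm{gr}}\subseteq J(H)$, any simple $M$ is annihilated by $J_{\mathrm{gr}}$, in particular by $a$ and $b$; thus $a\cdot M=b\cdot M=0$ and $M$ is a simple $kG$-module, necessarily one of the $S_i$. Conversely, each $S_i$ defined by $g\cdot x=\xi^i x$, $a\cdot x=b\cdot x=0$ is a well-defined $H$-module: one checks the defining relations of $H$ are respected, i.e. $g^n=1$ gives $\xi^{in}=1$ (true since $\xi^t=1$ and $t\mid n$), the relations $g^{-1}ag=a$, $g^{-1}bg=a+b$ are satisfied because both sides act as $0$, and $a^2=0$, $b^4=0$, $baba=abab$, $b^2a=ab^2+aba$ all hold trivially since $a,b$ act as $0$. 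These $t$ modules are pairwise non-isomorphic because $\xi^0,\dots,\xi^{t-1}$ are distinct scalars (the eigenvalue of $g$ distinguishes them), and they are one-dimensional, hence simple.

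For the final clause, when $t=1$ there is a unique simple module $S_0$, which is one-dimensional, so $H$ has a unique simple module up to isomorphism and therefore $H$ is a local algebra (a finite-dimensional algebra with a unique simple module is local). Alternatively one can invoke Lemma~\ref{2.1}(1) which gives that $g^2$ is central and, when $t=1$ so that $n=2^s$, that $g-1$ is nilpotent, making $J(H)=\langle g-1,a,b\rangle$ with $H/J(H)\cong k$.

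I do not expect a genuine obstacle here: the only point requiring care is the routine verification that the proposed action on $S_i$ respects all defining relations of $H$ (which is immediate since $a$ and $b$ act as zero), and the standard fact that over an algebraically closed field the group algebra $kG$ of a cyclic group of order $n=2^st$ with $t$ odd has exactly $t$ one-dimensional simple modules indexed by the $t$-th roots of unity. The structural input — that $a,b\in H_1$ generate a nilpotent ideal contained in $J(H)$ with quotient $kG$ — is exactly the ``graded Hopf algebra'' observation made just before the statement, so the proof is short.
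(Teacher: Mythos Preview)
Your proposal is correct and follows exactly the paper's approach: the paper does not give a separate proof but relies on the sentence immediately preceding the proposition, namely that the grading $H=\bigoplus_{m\geqslant 0}H_m$ with $H_0=kG$ and $a,b\in H_1$ forces simple $H$-modules to be simple $kG$-modules annihilated by $a$ and $b$. Your write-up is simply a fleshed-out version of this one-line justification, with the routine verifications spelled out.
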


Let $e_i=\frac{1}{t}\sum_{j=0}^{t-1}(\xi^{-i2^s}g^{2^s})^j$. Then
$\{e_0, e_1, \cdots, e_{t-1}\}$ is a set of primitive orthogonal
idempotents in $kG$ and $g^{2^s}e_i=\xi^{i2^s}e_i$.
$\{g^{i_1}e_i|0\leqslant i_1\leqslant 2^s-1\}$ is a basis of $kGe_i$
and dim$kGe_i=2^s$. By Lemma \ref{2.1}, $g^2\in Z(H)$, the center of
$H$. Hence $\{e_0, e_1, \cdots, e_{t-1}\}$ is a set of central
orthogonal idempotents of $H$. It follows that
$H=\bigoplus_{0\leqslant i\leqslant t-1}He_i$ is a decomposition of
the left regular module $H$, which is also a composition of $H$ as
two-sided ideals. By a discussion similar to that for $H(\lambda,\mu)$ in
Section \ref{3}, we have the following result from Lemma \ref{2.1} and
\cite[Lemma I.3.3]{Erd90}.

\begin{theo}\label{4.2}
Let $\{S_0, S_1, \cdots, S_{t-1}\}$ be the complete set of
non-isomorphic simple $H$-modules given in Proposition \ref{4.1}.
Let $P(S_i)$ denote the projective cover of $S_i$. Then

$(1)$ $P(S_i)\cong He_i$, where $0\leqslant i\leqslant t-1$.

$(2)$ $H$ has $t$ blocks $He_i$. Moreover, each block $He_i$ is a local
symmetric algebra.
\end{theo}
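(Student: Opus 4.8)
The strategy mirrors exactly what was done for $H(\lambda,\mu)$ in Section~\ref{3}. First I would establish that the simple modules $S_i$ are in bijection with the central primitive idempotents $e_i$. Since $g^2 \in Z(H)$ and $\xi^{2^s}$ is again a primitive $t$-th root of unity, the $e_i$ are a complete set of central orthogonal idempotents of $H$, giving the block decomposition $H=\bigoplus_{0\leqslant i\leqslant t-1}He_i$ of the left regular module, which is simultaneously a decomposition into two-sided ideals. Using the matrix form of the $g$-action on $kGe_i$ (upper triangular with sole eigenvalue $\xi^i$), and the fact that $a,b$ raise the graded degree while $g$ acts diagonally on the associated graded, I would argue that $g$ acts on all of $He_i$ with unique eigenvalue $\xi^i$ (with multiplicity $4\cdot 2^s = 2^{s+2}$, since $\dim H = 4n = 2^{s+2}t$ and hence $\dim He_i = 2^{s+2}$). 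Consequently, any indecomposable projective summand of $He_i$ has simple top on which $g$ acts by $\xi^i$, i.e.\ top isomorphic to $S_i$; since $S_i$ is the unique simple with that property, $P(S_i)$ is the unique indecomposable summand of $He_i$ up to isomorphism.

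For part~(1), I would then invoke the Wedderburn–Artin decomposition ${}_HH\cong\bigoplus_{0\leqslant i\leqslant t-1}P(S_i)^{\dim S_i}$ recalled in Section~\ref{1}. Because each $S_i$ is $1$-dimensional (Proposition~\ref{4.1}), this reads ${}_HH\cong\bigoplus_i P(S_i)$, and matching this against $H=\bigoplus_i He_i$ together with the uniqueness just established forces $P(S_i)\cong He_i$, with $\dim P(S_i)=2^{s+2}$.

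For part~(2): the blocks of $H$ are precisely the indecomposable two-sided ideal direct summands, and since each $He_i$ already affords a unique simple module $S_i$, each $He_i$ is a single block; thus $H$ has exactly $t$ blocks. That each $He_i$ is symmetric follows from Lemma~\ref{2.1}(2), which says $H$ is a symmetric Hopf algebra, combined with \cite[Lemma I.3.3]{Erd90} (a block of a symmetric algebra is symmetric). Finally, $He_i$ is local: its unique simple module $S_i$ is $1$-dimensional, so $He_i/\mathrm{rad}(He_i)\cong S_i\cong k$, which characterizes locality. (Here $t=1$ is the degenerate case already noted in Proposition~\ref{4.1}, where $H$ itself is local.)

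The only genuinely non-routine point is the claim that $g$ has a single eigenvalue on the whole block $He_i$. I expect this to be the main obstacle, though it is handled cleanly by the grading: $H=\bigoplus_{m\geqslant 0}H_m$ with $H_0=kG$ and $a,b\in H_1$, so $He_i$ inherits a filtration whose associated graded is built from $kGe_i$ by free action of the span of the $a,b$-monomials, on all of which $g$ continues to act unipotently times $\xi^i$; hence the characteristic polynomial of $g$ on $He_i$ is $(x-\xi^i)^{2^{s+2}}$. Everything else is a direct transcription of the $p>2$ arguments from Theorems~\ref{3.8} and Corollary~\ref{3.11}, replacing $p^s$ by $2^s$ and using $\dim S_i=1$ throughout.
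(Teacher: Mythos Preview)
Your approach is correct and matches the paper's own: the paper simply states that Theorem~\ref{4.2} follows ``by a discussion similar to that for $H(\lambda,\mu)$ in Section~\ref{3}'' together with Lemma~\ref{2.1} and \cite[Lemma I.3.3]{Erd90}, which is exactly the argument you outline. One small correction: for $p=2$ the Nichols algebra $\mathcal{B}(V)$ is $16$-dimensional (the set $I$ listed before Theorem~\ref{4.7} has $16$ elements), so $\dim H = 16n = 2^{s+4}t$ and $\dim He_i = 2^{s+4}$, not $2^{s+2}$; this does not affect your argument, since the key input is only that each $\dim S_i = 1$.
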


\begin{lem}\label{4.3}
Let $0\leqslant i\leqslant t-1$. Let $M$ be an indecomposable module of dimension $2$
over the block $He_i$. Then $M$
has one of the following structures:

$(1)$ There is a $k$-basis $\{v_1, v_2\}$ in $M$ such that
$g\cdot v_1=\xi^i\cdot v_1$, $g\cdot v_2=\xi^i\cdot v_2$, $a\cdot v_1=$
$a\cdot v_2=0$, $b\cdot v_1=0$ and $b\cdot v_2=v_1$.

$(2)$ There is a $k$-basis $\{v_1,v_2\}$ in $M$ such that
$g\cdot v_1=\xi^iv_1$, $g\cdot v_2=\xi^iv_2+v_1$,
$a\cdot v_1=a\cdot v_2=0$, $b\cdot v_1=0$ and
$b\cdot v_2=\gamma v_1$ for some $\gamma\in k$.
\end{lem}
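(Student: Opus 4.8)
The plan is to work inside a single block $He_i$, on which $g$ acts with the unique eigenvalue $\xi^i$, and exploit the relations of $H=\mathcal{B}(V)\# kG$ in characteristic $2$: $g^{-1}ag=a$, $g^{-1}bg=a+b$, $a^2=0$, $b^4=0$, $baba=abab$ and $b^2a=ab^2+aba$. Let $M$ be an indecomposable $He_i$-module with $\dim M=2$. Since $g$ has $\xi^i$ as its only eigenvalue on $M$, the operator $g-\xi^i$ is nilpotent on the $2$-dimensional space $M$, so either $g$ acts as the scalar $\xi^i$ (case where $g-\xi^i=0$ on $M$) or $g$ is, in a suitable basis, a single Jordan block with eigenvalue $\xi^i$. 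These two possibilities will become cases (1) and (2) respectively. The main work is to pin down how $a$ and $b$ act and to rule out everything except the two listed normal forms.

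\textbf{First case: $g$ acts as the scalar $\xi^i$.} First I would show $a\cdot M=0$. From $g^{-1}bg=a+b$ and $g$ scalar we get $b=a+b$ on $M$, hence $a\cdot M=0$; this also makes $a^2=0$ automatic and the relation $b^2a=ab^2+aba$ vacuous. So the $He_i$-module structure on $M$ is just the data of a nilpotent operator $b$ on a $2$-dimensional space with $b^4=0$, i.e.\ an operator with $b^2=0$ or $b$ a single Jordan block with $b^2\ne 0$. If $b=0$ then $M$ is a direct sum of two copies of $S_i$, contradicting indecomposability; if $b$ is a single $2\times2$ Jordan block, then $\operatorname{rank} b=1$ and $b^2=0$ (since $b^2$ would be a $2\times2$ nilpotent of rank forced to $0$), and choosing $v_1$ spanning $\operatorname{im} b$ and $v_2$ with $b\cdot v_2=v_1$ gives exactly the basis in (1). (The subcase $b^2\ne0$ cannot occur in dimension $2$.) This yields structure (1).

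\textbf{Second case: $g$ is a nontrivial Jordan block.} Pick a basis $\{v_1,v_2\}$ with $g\cdot v_1=\xi^i v_1$, $g\cdot v_2=\xi^i v_2+v_1$. Next I would determine $a$. Since $g^{-1}ag=a$, the operator $a$ commutes with $g$; a matrix commuting with a single nontrivial $2\times2$ Jordan block is a polynomial in it, so $a=\alpha I+\beta(g-\xi^i)$ for scalars $\alpha,\beta$. Then $a^2=0$ forces $\alpha^2=0$, so $\alpha=0$ and $a=\beta(g-\xi^i)$; thus $a\cdot v_1=0$, $a\cdot v_2=\beta v_1$. Now use $g^{-1}bg=a+b$: writing $b$ as a $2\times2$ matrix in this basis and computing $g^{-1}bg-b$, one finds $g^{-1}bg-b$ is strictly upper triangular with the $(1,2)$-entry equal to the lower-left entry of $b$ (up to the $\xi^i$-scaling); matching this against $a=\beta(g-\xi^i)$ forces $\beta=0$ (since $a$'s only nonzero entry is in position $(1,2)$ while the $(2,1)$-entry of $b$ must then vanish) — hence $a\cdot M=0$ — and forces the lower-left entry of $b$ to be $0$, i.e.\ $b\cdot v_1\in kv_1$. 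Because $\dim M=2$ and $b$ must be nilpotent ($b^4=0$), together with $b$ commuting "up to $a=0$" one deduces $b\cdot v_1=0$, so $b\cdot v_1=0$ and $b\cdot v_2=\gamma v_1$ for some $\gamma\in k$; the relations $b^4=0$, $baba=abab$, $b^2a=ab^2+aba$ are then all trivially satisfied. This is structure (2). Finally I would check indecomposability is consistent (in case (2) the module is automatically indecomposable as a $kG$-module since $g$ is a single Jordan block).

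\textbf{Main obstacle.} The one genuinely delicate point is the case analysis when $g$ is a Jordan block: I must be careful that solving $g^{-1}bg=a+b$ simultaneously with $a=\beta(g-\xi^i)$ and $a^2=b^4=0$ really forces $a=0$ and $b\cdot v_1=0$, rather than leaving an extra family of modules. Concretely, the potential pitfall is a module where $a\ne 0$; the argument above closes it off because $g^{-1}bg-b$ has zero $(2,1)$-entry while $a=\beta(g-\xi^i)$ also has zero $(2,1)$-entry but the forced shape of $b$ then kills $\beta$. I expect this bookkeeping with the $2\times2$ matrices, plus verifying the remaining Bernstein-type relation $b^2a=ab^2+aba$ holds (it does, since $a=0$), to be the only thing requiring care; everything else is immediate from $\dim M=2$.
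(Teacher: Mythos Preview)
Your overall strategy---split into the two Jordan forms of $g$ and then constrain $a,b$ by the defining relations---is exactly the paper's approach, and your treatment of the scalar case is correct (indeed your observation that $g^{-1}bg=a+b$ with $g$ scalar immediately gives $a=0$ is a bit slicker than the paper's version).

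There is, however, a genuine gap in the Jordan-block case. From $g^{-1}bg-b=a$ together with $A=\beta(g-\xi^i)$ you do \emph{not} get $\beta=0$; what you get is only $b_{21}=0$ and $\beta=\xi^{-i}(b_{11}-b_{22})$. Your parenthetical justification (``$a$'s only nonzero entry is in $(1,2)$ while the $(2,1)$-entry of $b$ vanishes'') does not force $\beta=0$: it only forces $B$ to be upper triangular with $b_{11}-b_{22}=\xi^{i}\beta$. The step that actually kills $\beta$ is nilpotence of $B$, which you invoke \emph{afterwards} and for a different purpose. The correct order is: first the relation $g^{-1}bg=a+b$ makes $B$ upper triangular with diagonal $(b_{22}+\xi^{i}\beta,\,b_{22})$; then $b^4=0$ forces both diagonal entries to vanish, so $b_{22}=0$ and $b_{22}+\xi^{i}\beta=0$, whence $\beta=0$; and only then do you know $a=0$ and $B=\begin{pmatrix}0&\gamma\\0&0\end{pmatrix}$. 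This is precisely how the paper argues. Once you reorder these two steps your proof goes through and coincides with the paper's.
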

\begin{proof}
Let $M$ be a left $He_i$-module of dimension 2. Then $M$ is
a $kGe_i$-module. Since $g^{2^s}e_i=\xi^{i2^s}e_i$, there is a basis $\{v_1, v_2\}$ of $M$ such that
the corresponding matrix $G_1$ of the action of $g$ on $M$ is one of the followings:
\[\begin{matrix}
\begin{pmatrix} \xi^i & 0\\0 & \xi^i \end{pmatrix},&
\begin{pmatrix} \xi^i & 1\\0 & \xi^i \end{pmatrix}.&
\end{matrix}\]
Let $A$ and $B$ denote the matrices of the actions of $a$ and $b$
with respect to the basis $\{v_1, v_2\}$ of $M$, respectively.

Assume $G_1=\begin{pmatrix} \xi^i & 1\\0 & \xi^i
\end{pmatrix}$. Since $ga=ag$, $AG_1=G_1A$. Hence $A=\begin{pmatrix}\alpha_1 & \alpha_2\\0 & \alpha_1
\end{pmatrix}$ for some $\alpha_1, \alpha_2\in k$. Since $a^2=0$, $A$ is
a nilpotent matrix, and so $\alpha_1=0$. From $bg=ga+gb$, one knows that $BG_1=G_1B+G_1A$.
Then it follows that $B=\begin{pmatrix} \beta+\xi^i\alpha_2 & \gamma\\0 & \beta
\end{pmatrix}$ for some $\beta, \gamma\in k$. Since $b^4=0$, $B$ is a nilpotent matrix.
Hence $\beta+\xi^i\alpha_2=\beta=0$, and so $\alpha_2=0$. Thus,
$A=0$ and $B=\begin{pmatrix} 0 & \gamma\\0 & 0
\end{pmatrix}$. In this case, $M$ has the structure described in (2).

Assume $G_1=\begin{pmatrix} \xi^i & 0\\0 & \xi^i
\end{pmatrix}$. Then $G_1B=BG_1$. Since $BG_1=GB+G_1A$, $G_1A=0$, and so $A=0$.
In this case, under any basis of $M$, the matrix of the action of
$g$ is always $G_1$ and $A$ is always 0. If $b\cdot M=0$, then
$M\cong S_i\oplus S_i$, a semisimple module. Hence $b\cdot M\neq 0$.
So we may choose a basis $\{v_1, v_2\}$ of $M$ such that
$B=\begin{pmatrix}0 & 1\\0 & 0\end{pmatrix}$ since $b$ is a
nilpotent element of $H$. Thus, $M$ has the structure described in
(1).

This completes the proof.
\end{proof}

Let $0\leqslant i\leqslant t-1$. For $\gamma\in k$, let $M(\gamma)$ denote the 2-dimensional
module over the block $He_i$ described as in Lemma \ref{4.3}(2).

\begin{lem}\label{4.4}
Let $0\leqslant i\leqslant t-1$ and $\gamma_1, \gamma_2\in k$. Then $M(\gamma_1)\cong M(\gamma_2)$
if and only if $\gamma_1=\gamma_2$.
\end{lem}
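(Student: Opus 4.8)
The plan is to prove both directions of the equivalence, with the ``if'' direction being trivial and essentially all the content living in the ``only if'' direction. The setup is that $M(\gamma)$ has a basis $\{v_1, v_2\}$ with $g\cdot v_1 = \xi^i v_1$, $g\cdot v_2 = \xi^i v_2 + v_1$, $a\cdot v_1 = a\cdot v_2 = 0$, $b\cdot v_1 = 0$, and $b\cdot v_2 = \gamma v_1$. An isomorphism $\varphi: M(\gamma_1) \to M(\gamma_2)$ of $He_i$-modules is a $k$-linear bijection commuting with the actions of $g$, $a$, and $b$.

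First I would fix bases $\{v_1, v_2\}$ of $M(\gamma_1)$ and $\{w_1, w_2\}$ of $M(\gamma_2)$ as above, and write the matrix $P$ of a hypothetical isomorphism $\varphi$ with respect to these bases. The key constraint is that $\varphi$ must commute with $g$. Since on each $M(\gamma_i)$ the action of $g$ has matrix $\begin{pmatrix}\xi^i & 1 \\ 0 & \xi^i\end{pmatrix}$ in the given basis, the commutation condition $PG_1 = G_1 P$ forces $P$ to be upper triangular of the form $\begin{pmatrix}\alpha & \beta \\ 0 & \alpha\end{pmatrix}$ with $\alpha \neq 0$ (invertibility). This is exactly the computation that appeared in the proof of Lemma~\ref{4.3}. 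In particular, $\varphi(v_1) = \alpha w_1$ and $\varphi(v_2) = \beta w_1 + \alpha w_2$.

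Next I would impose commutation with $b$. Compute $\varphi(b\cdot v_2) = \varphi(\gamma_1 v_1) = \gamma_1 \alpha w_1$ on one hand, and $b\cdot \varphi(v_2) = b\cdot(\beta w_1 + \alpha w_2) = \beta (b\cdot w_1) + \alpha (b\cdot w_2) = 0 + \alpha \gamma_2 w_1 = \alpha \gamma_2 w_1$ on the other. Equating gives $\gamma_1 \alpha = \gamma_2 \alpha$, and since $\alpha \neq 0$ we conclude $\gamma_1 = \gamma_2$. (Commutation with $a$ is automatic since $a$ acts as zero on both modules.) Conversely, if $\gamma_1 = \gamma_2$ the identity map on the underlying space, matching $v_i$ to $w_i$, is visibly an isomorphism. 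I do not anticipate a real obstacle here; the only point requiring a little care is making sure the $g$-action matrix is genuinely the Jordan block form in the chosen basis (this is part of the defining data of $M(\gamma)$ from Lemma~\ref{4.3}(2)) so that the shape of $P$ is pinned down, rather than allowing a diagonalizable $g$-action which would give more freedom. Once the form of $P$ is established, the $b$-commutation step is a one-line linear-algebra comparison.
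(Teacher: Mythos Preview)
Your proposal is correct and follows essentially the same approach as the paper: both arguments pass to the matrix $F$ (your $P$) of the isomorphism, use $FG_1=G_1F$ to force $F=\begin{pmatrix}\alpha&\beta\\0&\alpha\end{pmatrix}$ with $\alpha\neq 0$, and then impose compatibility with the $b$-action to conclude $\gamma_1=\gamma_2$. The paper simply states the matrix equations and leaves the routine computation to the reader, whereas you spell it out explicitly.
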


\begin{proof}
Let $G_1=\begin{pmatrix} \xi^i & 1\\0 & \xi^i \end{pmatrix}$,
$B_1=\begin{pmatrix} 0 & \gamma_1\\0 & 0\end{pmatrix}$ and
$B_2=\begin{pmatrix} 0 & \gamma_2\\0 & 0\end{pmatrix}$.
If $M(\gamma_1)\cong M(\gamma_2)$, there exists an invertible
matrix $F\in M_2(k)$ such that $G_1F=FG_1$ and $B_1F=FB_2$.
Then one can get that $\gamma_1=\gamma_2$.
\end{proof}

\begin{rem}\label{4.5}
Let $0\leqslant i\leqslant t-1$ and $\beta, \gamma\in k$.
Then there is an algebra map $f: H\rightarrow M_2(k)$ defined by
$$f(g)=\begin{pmatrix} \xi^i & \beta\\0 & \xi^i\end{pmatrix},\
f(a)=\begin{pmatrix} 0 & 0\\0 & 0\end{pmatrix},\
f(b)=\begin{pmatrix} 0 & \gamma\\0 & 0\end{pmatrix}.$$
Let $M(\beta, \gamma)$ denote the corresponding $H$-module.
Obviously, $M(\beta, \gamma)$ is a module over the block $He_i$.
One can easily check that $M(\beta, \gamma)\cong M(\beta', \gamma')$
if and only if $(\beta, \gamma)=\alpha(\beta', \gamma')$ in $k\times k$
for some $0\neq\alpha\in k$. If $\beta=\gamma=0$,
then $M(\beta, \gamma)\cong S_i\oplus S_i$. Otherwise, $M(\beta, \gamma)$
is indecomposable.

Let $\{v_1, v_2\}$ be the basis of $M(\beta, \gamma)$ such that
the corresponding matrix representation are given as above. Fix a
non-zero element $v\in S_i$. Then there is an exact sequence
$$0\rightarrow S_i\xrightarrow{\theta} M(\beta, \gamma)\xrightarrow{\eta}S_i\rightarrow 0$$
given by $\theta(v)=v_1$, $\eta(v_1)=0$ and $\eta(v_2)=v$.
Denote by $E(\beta, \gamma)$ the extension of $S_i$ by $S_i$.
Then a straightforward verification shows that two extensions $E(\beta, \gamma)$
and $E(\beta', \gamma')$ are equivalent if and only if $(\beta, \gamma)=(\beta', \gamma')$.
Thus, we have the following corollary.
\end{rem}

\begin{col}\label{4.6}
Let $0\leqslant i, j\leqslant t-1$. Then
$${\rm dim(Ext}(S_i,S_j))=
\begin{cases} 2, &{\rm if }\ i=j \\0, &{\rm if }\ i\neq j
\end{cases}$$
\end{col}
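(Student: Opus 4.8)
The plan is to read off the corollary from the extension analysis carried out in Remark \ref{4.5}. The key observation is that $\mathrm{Ext}(S_i,S_j) = \mathrm{Ext}^1_H(S_i,S_j)$ classifies equivalence classes of short exact sequences $0 \to S_j \to M \to S_i \to 0$, and such an $M$ is a $2$-dimensional $H$-module whose composition factors are $S_i$ on top and $S_j$ at the bottom.

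First I would dispose of the case $i \neq j$. If $0 \to S_j \to M \to S_i \to 0$ is exact, then $g$ acts on $M$ with eigenvalues $\xi^i$ and $\xi^j$ (one from each composition factor), which are distinct since $0 \le i,j \le t-1$ and $\xi$ is a primitive $t$-th root of unity. Hence $M$ decomposes as a $kG$-module into the two eigenspaces $kv_i \oplus kv_j$ with $g\cdot v_i = \xi^i v_i$, $g\cdot v_j = \xi^j v_j$; since $a,b$ are nilpotent in $H$ (they lie in the augmentation ideal and $H$ is graded with $a,b \in H_1$), the operators $a,b$ act as strictly triangular maps with respect to the composition series, so $a\cdot v_i, b\cdot v_i \in S_j$ and $a\cdot v_j, b\cdot v_j = 0$; but $a\cdot v_i$ and $b\cdot v_i$ must be $g$-eigenvectors of eigenvalue $\xi^i$ lying in $S_j = kv_j$, which forces them to be $0$. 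Thus $kv_i$ is a submodule, $M \cong S_i \oplus S_j$, and the extension is split. Therefore $\mathrm{dim}\,\mathrm{Ext}(S_i,S_j) = 0$ when $i \neq j$.

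For the case $i = j$, I would invoke Remark \ref{4.5} directly: the extensions of $S_i$ by $S_i$ are parametrized, via the explicit modules $M(\beta,\gamma)$, by pairs $(\beta,\gamma) \in k \times k$, and the remark asserts that $E(\beta,\gamma)$ and $E(\beta',\gamma')$ are equivalent as extensions precisely when $(\beta,\gamma) = (\beta',\gamma')$. To make this a genuine computation of the $\mathrm{Ext}$ group one must check that every extension of $S_i$ by $S_i$ is equivalent to some $E(\beta,\gamma)$ and that the Baer sum corresponds to vector addition of parameters; the first follows by the same eigenvalue/nilpotency analysis as above applied to a $2$-dimensional module all of whose composition factors are $S_i$ — after choosing a basis adapted to the composition series one finds $g$ acts by $\left(\begin{smallmatrix}\xi^i & \beta\\0 & \xi^i\end{smallmatrix}\right)$, $a$ acts as $0$ (using $a^2=0$ and $ga=ag$ to force the off-diagonal entry to vanish, exactly as in the proof of Lemma \ref{4.3}), and $b$ acts by $\left(\begin{smallmatrix}0 & \gamma\\0 & 0\end{smallmatrix}\right)$; the Baer sum statement is the standard linear structure on $\mathrm{Ext}^1$ and is immediate from the concrete cocycle description. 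Hence $\mathrm{Ext}(S_i,S_i) \cong k^2$ as a vector space, so its dimension is $2$.

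The only real subtlety — and the step I would be most careful about — is verifying in the case $i = j$ that the parametrization by $(\beta,\gamma)$ is \emph{exhaustive} (every self-extension arises this way) and that inequivalent parameters give genuinely inequivalent extensions rather than merely non-isomorphic middle terms; the latter point is why Remark \ref{4.5} is phrased in terms of equivalence of extensions (which remembers the maps $\theta,\eta$) rather than isomorphism of modules, and I would make sure the corollary's proof cites that precise statement. Everything else is routine: the nilpotence of $a,b$, the eigenvalue separation for $i \neq j$, and the matrix normalizations are all already established in Lemma \ref{4.3} and its proof.
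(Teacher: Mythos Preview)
Your argument is correct. For $i=j$ you follow exactly the paper's route, deducing the result from Remark \ref{4.5} (and you are right to flag that exhaustiveness of the family $M(\beta,\gamma)$ is supplied by the analysis in Lemma \ref{4.3}). For $i\neq j$ you give a direct splitting argument via $g$-eigenvectors; this works, with one small caveat: $b\cdot v_i$ is a $g$-eigenvector of eigenvalue $\xi^i$ only \emph{after} you have established $a\cdot v_i=0$, since $gb=bg-ga$ rather than $gb=bg$, so the two vanishings should be argued in that order. The paper, by contrast, does not argue the case $i\neq j$ at all --- it is implicit in the block decomposition of Theorem \ref{4.2}, where $S_i$ and $S_j$ live in distinct blocks $He_i$ and $He_j$, making $\mathrm{Ext}^1_H(S_i,S_j)=0$ automatic. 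Your approach is more explicit and self-contained; the block argument is shorter.
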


Now we will consider the representation type of $H$. Since $H$ has $t$
blocks $He_i$, we only need to consider the representation type of
each block $He_i$. Let
$$I=\{1, a, b, ab, ba, b^2, aba, ab^2, bab,b^3, abab, ab^3, bab^2, abab^2, bab^3, abab^3\}.$$
Then by \cite[Theorem 3.1 and Corollary 3.4]{Cib09},
$H$ is a $2^{s+4}t$-dimensional graded Hopf algebra with a basis
$\{g^jx|0\leqslant j\leqslant 2^st -1, x\in I\}$.
Since $g^{2^s}e_i=\xi^{i2^s}e_i$, by a discussion similar to that for $H(\lambda,\mu)$ in
Section \ref{3}, one gets that each block $He_i$ is $2^{s+4}$-dimensinal with a basis
$\{g^jxe_i|0\leqslant j\leqslant 2^s-1, x\in I\}$, where $0\leqslant i\leqslant t-1$.

Note that ${\rm deg}(a)={\rm deg}(b)=1$ in the graded Hopf algebra $H$.

\begin{theo}\label{4.7}
Let $0\leqslant i\leqslant t-1$.
Then the block $He_i$ is of wild representation type.
\end{theo}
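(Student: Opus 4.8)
The plan is to show that $He_i$ is of wild representation type by producing a quotient algebra of $He_i$ that is already wild, or equivalently by exhibiting a finite-dimensional algebra known to be wild as a quotient (or a subalgebra over which $He_i$ is free), since representation-wildness is inherited by the original algebra. Concretely, I would look at the radical layers of $He_i$. We know $He_i$ is a local symmetric algebra of dimension $2^{s+4}$ with unique simple $S_i$, and by Corollary \ref{4.6} we have $\dim \operatorname{Ext}^1(S_i,S_i)=2$, so $J/J^2$ is $2$-dimensional, i.e.\ $He_i$ is generated over $k$ by two elements modulo its radical. This already puts us in the regime where wildness is plausible: a local algebra with a $2$-dimensional space of self-extensions of its simple module is wild as soon as $\dim \operatorname{Ext}^2(S_i,S_i)$ is large enough, by the standard criterion (see e.g.\ the separated quiver / Brauer--Thrall type arguments, or the criterion that an algebra whose Ext-quiver at a vertex has two loops and enough relations-space is wild).

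The cleanest route is to exhibit a surjection $He_i \twoheadrightarrow \Lambda$ where $\Lambda$ is an explicit wild local algebra. A natural candidate is $\Lambda = k\langle x,y\rangle/(x^2, y^2, \text{higher terms})$ truncated appropriately, or better, the algebra $k[x,y]/(x^2,y^2, xyx, yxy)$ or a similar "down-up"-type local algebra; the three-subspace-type or the algebra $k\langle x,y\rangle/(x,y)^3$ are classical wild examples. To build such a quotient, I would pass first to $He_i / \langle g - \xi^i \rangle$ (killing the nilpotent part of $g$), or more carefully to the quotient by the ideal generated by $g-\xi^i$ together with a suitable power of $a$; since $g^{2^s}e_i = \xi^{i2^s}e_i$ and $(g-\xi^i)^{2^s}e_i = 0$, one checks $g - \xi^i$ generates a nilpotent ideal, and the quotient $He_i/\langle g-\xi^i\rangle$ is the local algebra generated by the images $\bar a, \bar b$ subject to $\bar a^2 = 0$, $\bar b^4 = 0$, $\bar b \bar a \bar b \bar a = \bar a \bar b \bar a \bar b$, $\bar b^2 \bar a = \bar a \bar b^2 + \bar a \bar b \bar a$ (in the $p=2$ case), of dimension $2^4 = 16$ with basis the image of $I$. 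I would then further quotient by $\bar a \bar b \bar a$ (or by $(\bar a, \bar b)^3$, or whatever minimal ideal makes the presentation match a known wild algebra) to land on an explicit small wild local algebra, and cite the classification of local algebras of small dimension by Ringel/Gabriel or the wildness criteria in Erdmann's book.

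The key steps in order: (i) reduce to the block $He_i$ and note it is local symmetric with $\dim J/J^2 = 2$ using Corollary \ref{4.6}; (ii) show $\langle g - \xi^i\rangle$ is a nilpotent ideal of $He_i$, so $He_i$ and $\bar H_i := He_i/\langle g-\xi^i\rangle$ have the same representation type up to wildness (wildness of a quotient forces wildness of the algebra), and compute a basis of $\bar H_i$ from the basis $\{g^j x e_i\}$; (iii) identify an explicit ideal $\mathfrak{a} \subseteq \bar H_i$ so that $\bar H_i/\mathfrak{a}$ is isomorphic to a local algebra on the wild list — e.g.\ verify the relations reduce to those of $k\langle x,y\rangle/(x^2, y^2, (x+y)^3)$ or another algebra explicitly known to be wild; (iv) conclude that $\bar H_i$, hence $He_i$, is wild. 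An alternative to (iii)–(iv), if a clean quotient is elusive, is to directly construct a family of $\bar H_i$-modules realizing a wild problem: using the matrices of Remark \ref{4.5} as a starting point, build $2$-parameter (or $3\times 3$-block) families $g \mapsto \xi^i I + N$, $a \mapsto A$, $b \mapsto B$ with $A,B$ nilpotent satisfying the defining relations, encoding a representation of the free algebra $k\langle X, Y\rangle$, and check the functor is full and faithful (a "wild matrix problem" embedding).

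The main obstacle is step (iii): one must verify that after killing $g-\xi^i$ the resulting local algebra genuinely contains a wild algebra as a quotient, which requires correctly matching the somewhat unusual relations $\bar b^2 \bar a = \bar a \bar b^2 + \bar a \bar b \bar a$ (for $p=2$) — or the $p>2$ relation $ba = ab + \tfrac12 a^2$ with $a^p, b^p$ central nilpotents — against a known wild presentation, and confirming the quotient is not accidentally tame or of finite type. In the $p>2$ case one has the additional subtlety that $H(\lambda,\mu)e_i$ for $i \neq 0$ is \emph{not} local (its simple $S_i$ has dimension $p$), so the wildness argument there — not covered by Theorem \ref{4.7}, which is about the $p=2$ algebra $H = \mathcal{B}(V)\# kG$ — would instead go through the Ext-quiver computation and a separated-quiver argument; but for the stated Theorem \ref{4.7} ($p=2$, local block) the quotient-algebra route is the cleanest, and the crux is simply pinning down the smallest explicit wild quotient and citing its wildness from the literature (Ringel's list of wild local algebras, or Erdmann's classification of tame blocks, so that anything not on the tame list is wild).
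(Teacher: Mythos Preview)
Your main route has a genuine error at step (ii). When you quotient $He_i$ by the two-sided ideal $\langle g-\xi^i\rangle$, the relation $bg=g(a+b)$ survives as a constraint: since $\bar g=\xi^i$ is now a scalar, $\bar b\,\xi^i=\xi^i(\bar a+\bar b)$ forces $\bar a=0$. Concretely, $b(g-\xi^i)-(g-\xi^i)b=ga$ in $He_i$, so $a\in\langle g-\xi^i\rangle$. Hence the quotient $He_i/\langle g-\xi^i\rangle$ is not the $16$-dimensional algebra on $\bar a,\bar b$ you wrote down, but rather $k[\bar b]/(\bar b^4)$, a $4$-dimensional uniserial algebra of finite representation type. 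Your proposed wild quotient simply does not exist along this route, and any further quotient of $k[\bar b]/(\bar b^4)$ is of course also of finite type. The same problem obstructs any attempt to ``separate'' $a$ and $b$ after making $g$ central: the skew relation $g^{-1}bg=a+b$ ties $a$ to the noncentrality of $g$.

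The paper avoids this trap by never passing to such a quotient. Instead it works directly with the radical filtration of the local symmetric algebra $He_i$: it identifies $J$, shows $\dim(J/J^2)=2$ using Corollary \ref{4.6} and \cite[Proposition III.1.14]{Aus95}, and then computes $J^2/J^3$ explicitly, finding $\dim(J^2/J^3)\geqslant 3$. The point is that in the layer $J^2/J^3$ the elements $\overline{ae_i}$, $\overline{(g-\xi^i)be_i}$, $\overline{b^2e_i}$ (and, when $s>1$, also $\overline{(g-\xi^i)^2e_i}$) are linearly independent; here the interaction of $g-\xi^i$ with $b$ is precisely what makes the layer large, rather than something to be killed. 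One then invokes \cite[Lemma III.4]{Erd90}: a local symmetric algebra with $\dim(J^2/J^3)\geqslant 3$ is wild. Your step (i) matches the paper, but from there you should compute $J^2/J^3$ in the full block rather than pass to a quotient that collapses $a$.
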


\begin{proof}
Let $0\leqslant i\leqslant t-1$. Then $\{(g-\xi^i)^jxe_i|0\leqslant j\leqslant 2^s-1, x\in I\}$ is
also a basis of $He_i$.
Let $J$ denote the Jacobson radical of $He_i$. Since $S_i$ is the unique simple module
over the block $He_i$, it follows from Proposition \ref{4.1} that $J$ has a basis
$\{(g-\xi^i)^jxe_i|0\leqslant j\leqslant 2^s-1, x\in I, j+{\rm deg}(x)\geqslant 1\}$.
Since $g^{-1}bg=a+b$, we
have $b(g-\xi^i)^m=(g-\xi^i)^mb+m(g-\xi^i)^ma+m\xi^i(g-\xi^i)^{m-1}a$ for all $m\geqslant 1$ by induction
on $m$. By these relations and the other relations of $H$, it is easy to
check that $N={\rm span}\{(g-\xi^i)^jxe_i, ae_i|0\leqslant j\leqslant
2^s-1, j+{\rm deg}(x)\geqslant 2\}$ is a left ideal of $He_i$ and $N\subseteq J^2$.
Observe that ${\rm dim}(J/N)=2$. By \cite[Proposition III.1.14]{Aus95} and Corollary \ref{4.6},
we have ${\rm dim}(J/J^2)={\rm dim(Ext}(S_i,S_i))=2$.
It follows that $J^2=N$. Let $M={\rm span}\{
(g-\xi^i)^jxe_i, (g-\xi^i)ae_i, abe_i, bae_i
|0\leqslant j\leqslant 2^s-1, j+{\rm deg}(x)\geqslant 3\}$.
Then it is easy to check that $M$ is a left ideal of $He_i$ and
$M\subseteq J^3$. Moreover, one can check that $J^2/M$ is a semisimple $He_i$-module,
and so $J^3\subseteq M$. Thus $J^3=M$. Obviously,
$J^2/M={\rm span}\{\overline{ae_0}, \overline{(g-\xi^i)^2e_i}, \overline{(g-\xi^i)be_i}, \overline{b^2e_i}\}$,
where $\overline{y}=y+M$ in $J^2/M$ for any $y\in J^2$.
Note that $(g-\xi^i)^2e_i=0$ when $s=1$. Hence
$3\leqslant{\rm dim}(J^2/M)\leqslant 4$. Since $He_i$ is a local
symmetric algebra by Theorem \ref{4.2} and ${\rm dim}(J^2/J^3)\geqslant3$,
it follows from \cite[Lemma III.4]{Erd90} that $He_i$ is of wild representation type.
\end{proof}

\begin{col}\label{4.8}
Assume $p=2$. Then $\mathcal{B}(V)\# kG$ is of wild representation type.
\end{col}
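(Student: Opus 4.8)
The plan is to derive this at once from Theorem \ref{4.7}. Recall from the paragraph preceding Theorem \ref{4.7} that, for $p=2$, the Hopf algebra $H=\mathcal{B}(V)\# kG$ splits as $H=\bigoplus_{0\leqslant i\leqslant t-1}He_i$, a decomposition into blocks; this uses $g^{2}\in Z(H)$, so that each $e_i$ is a central idempotent and each $He_i$ is a two-sided ideal which is itself an algebra with unit $e_i$. By Theorem \ref{4.7}, every block $He_i$ is of wild representation type.

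It then remains to invoke the general fact that a finite dimensional algebra is of wild representation type as soon as one of its blocks is. Indeed, via the canonical algebra surjection $H\twoheadrightarrow He_i$ that kills the complementary idempotents, the category of $He_i$-modules is identified with the full subcategory of $H$-modules on which $e_j$ acts as zero for all $j\neq i$, and this inclusion is exact and preserves indecomposability and isomorphism classes. Composing a representation embedding from the category of finitely generated modules over the free algebra $k\langle x,y\rangle$ into the category of $He_i$-modules (such an embedding exists because $He_i$ is wild) with this inclusion yields a representation embedding into the category of $H$-modules. Hence $H=\mathcal{B}(V)\# kG$ is of wild representation type. When $t=1$ there is only one block and $H=He_0$, so the assertion is literally Theorem \ref{4.7}.

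There is essentially no obstacle in this argument; the only point that needs (already completed) care is the verification that $H=\bigoplus He_i$ is a genuine decomposition into blocks, which is precisely what licenses the passage from ``some block is wild'' to ``$H$ is wild''.
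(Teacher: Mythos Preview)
Your argument is correct and is exactly the intended one: the paper states Corollary \ref{4.8} without proof precisely because it is immediate from Theorem \ref{4.7} together with the block decomposition $H=\bigoplus_i He_i$. Your added remark that wildness of a single block forces wildness of the whole algebra is the standard (and correct) justification the paper leaves implicit.
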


In the rest of this section, assume $p>2$ and $\lambda, \mu\in k$.
We will consider the representation type
of $H(\lambda, \mu)$. Let $\{e_0, e_1, \cdots, e_{t-1}\}$ be the set
of central orthogonal primitive idempotents of $H(\lambda, \mu)$
described as in the last section. Then $H(\lambda, \mu)$ has $t$
blocks $H(\lambda, \mu)e_i$. Hence we only need to consider the
representation type of each block $H(\lambda, \mu)e_i$.
We first consider the case of $\lambda=0$. From Theorems \ref{3.3}
and \ref{3.8}, one knows that $H(0, \mu)$ is a basic algebra and that
$T_i$ is the unique simple module over the block $H(0, \mu)e_i$,
where $0\leqslant i\leqslant t-1$. Moreover, each block $H(0, \mu)e_i$
is a local symmetric algebra by Lemma \ref{2.3} and \cite[Lemma I.3.3]{Erd90}.

\begin{lem}\label{4.9}
We have ${\rm dim(Ext}(T_i, T_i))=2$ over each block $H(0,\mu)e_i$,
where $0\leqslant i\leqslant t-1$.
\end{lem}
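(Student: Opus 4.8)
The plan is to compute $\mathrm{Ext}(T_i,T_i)$ directly via short exact sequences, in the same spirit as Remark \ref{4.5} and Corollary \ref{4.6}, since in the $p=2$ case that computation was the input to the wildness argument and we need the analogous input here. Fix $0\leqslant i\leqslant t-1$. Any extension $0\to T_i\to M\to T_i\to 0$ is a $2$-dimensional module over the block $H(0,\mu)e_i$, and since $g$ acts on this block with unique eigenvalue $\xi^i$, one may pick a basis $\{v_1,v_2\}$ of $M$ with $v_1$ spanning the submodule $T_i$, so that $g\cdot v_1=\xi^i v_1$, $g\cdot v_2=\xi^i v_2+\gamma_g v_1$, $a\cdot v_1=0$, $a\cdot v_2=\gamma_a v_1$, $b\cdot v_1=0$, $b\cdot v_2=\gamma_b v_1$ for scalars $\gamma_g,\gamma_a,\gamma_b\in k$. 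Recall $T_i$ is given by $g\cdot v=\xi^i v$, $a\cdot v=0$, $b\cdot v=\mu^{1/p}(1-\xi^i)v$; but note that on the block $H(0,\mu)e_i$ we have $b^p=\mu(1-g^p)$ and $g^{p^s}$ acts as $\xi^{ip^s}$, so after replacing $b$ by $b_0:=b-\mu^{1/p}(1-\xi^i)$ (a local coordinate change on the block, legitimate since $b_0^p$ is still central and kills $T_i$ appropriately) we may assume $b\cdot v=0$ on $T_i$; I would carry out the analysis in these coordinates so that $T_i$ looks like the trivial-type module.

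The next step is to impose the defining relations of $H(0,\mu)$ on the matrices and see which triples $(\gamma_g,\gamma_a,\gamma_b)$ are allowed, and then which give split extensions or equivalent extensions. The relations to check are $g^n=1$, $g^{-1}ag=a$, $g^{-1}bg=a+b$, $a^p=0$, $b^p=0$ (in the shifted coordinate, $b_0^p e_i=0$), and $ba=ab+\tfrac12 a^2$. On a $2$-dimensional module where every generator acts as $\xi^i\cdot\mathrm{id}$ plus a strictly-upper-triangular nilpotent part, every product of two nilpotent parts vanishes, so the relations $a^p=0$, $b_0^p=0$, $ba=ab+\tfrac12 a^2$ impose nothing, $g^n=1$ is automatic, $g^{-1}ag=a$ is automatic, and the only genuine constraint is $g^{-1}bg=a+b$, i.e. $BG_1=G_1B+G_1A$ at the level of nilpotent parts; since $G_1$ is a nonzero scalar plus nilpotent and the nilpotent parts square to zero, this forces a relation between $\gamma_g$, $\gamma_a$, $\gamma_b$. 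A short computation (exactly parallel to Lemma \ref{4.3}) will show the constraint is something like $\gamma_a=0$ is \emph{not} forced here — rather one coordinate is free and the others are linked — and the upshot is that the space of equivalence classes of extensions is $2$-dimensional. Concretely, I expect the conclusion that a basis of $\mathrm{Ext}(T_i,T_i)$ is represented by the class with $(\gamma_g,\gamma_b)\neq(0,0)$, $\gamma_a$ determined, giving a two-parameter family up to equivalence, hence $\dim\mathrm{Ext}(T_i,T_i)=2$.

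Alternatively, and perhaps more cleanly, I would use $\dim\mathrm{Ext}(T_i,T_i)=\dim(J/J^2)$ over the block where $J=\mathrm{rad}(H(0,\mu)e_i)$, since $H(0,\mu)e_i$ is local with simple module $T_i$. The block $H(0,\mu)e_i$ has basis $\{g^{i_1}a^{i_2}b^{i_3}e_i\}$ with $0\leqslant i_1\leqslant p^s-1$, $0\leqslant i_2,i_3\leqslant p-1$ (from Theorem \ref{3.8} and the dimension count $\dim H(0,\mu)e_i=p^{s+2}$). In the local coordinates $(g-\xi^i)$, $a$, $b_0$, the radical $J$ is spanned by the basis elements with $i_1+i_2+i_3\geqslant 1$, and $J^2$ by those with total degree $\geqslant 2$ — here I must be careful that $(g-\xi^i)^{p^s}=0$ and the appropriate $p$-th powers vanish, so that the "degree one" part $J/J^2$ is spanned by the images of $(g-\xi^i)e_i$, $ae_i$, $b_0e_i$, \emph{but} the relation $g^{-1}bg=a+b$ forces $ae_i\equiv$ (scalar)$(g-\xi^i)b_0 e_i \in J^2$ modulo lower terms, collapsing one dimension; what survives is $2$-dimensional, spanned by $(g-\xi^i)e_i$ and $b_0 e_i$ modulo $J^2$. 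That gives $\dim(J/J^2)=2$ directly.

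\textbf{Main obstacle.} The delicate point in either route is verifying exactly which of the three candidate generators of $J/J^2$ become dependent modulo $J^2$: one must track the relation $g^{-1}bg=a+b$, equivalently $bg-gb=ga$, carefully at the level of associated-graded / radical-layers to see that it identifies $\overline{ae_i}$ (up to a nonzero scalar) with a product-term and hence kills it in $J/J^2$ but keeps $\overline{(g-\xi^i)e_i}$ and $\overline{b e_i}$ independent — and to confirm no \emph{further} collapse occurs (e.g. from $a^p=0$ or $b_0^p e_i=0$, which only affect $J^p$-type layers and are harmless here since $p>2$). Getting this bookkeeping right, including the coordinate shift $b\mapsto b_0$ to reduce to the $\mu=0$-like situation, is the crux; once $\dim(J/J^2)=2$ is established the identification with $\dim\mathrm{Ext}(T_i,T_i)$ is immediate by \cite[Proposition III.1.14]{Aus95}.
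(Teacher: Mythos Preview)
Your first route is exactly the paper's approach, but you have the key constraint backwards. The relation $g^{-1}bg=a+b$, rewritten as $BG_1-G_1B=G_1A$, gives $0=\xi^i\gamma_a$ on a $2$-dimensional extension (since both $G_1-\xi^iI$ and $B-\mu^{1/p}(1-\xi^i)I$ are scalar multiples of $E_{12}$ and hence commute), so $\gamma_a=0$ \emph{is} forced, precisely as in Lemma~\ref{4.3}. The remaining parameters $(\beta,\gamma)$ for $g$ and $b$ are then free, and the paper writes down the resulting family $N(\beta,\gamma)$ explicitly, checks $N(\beta,\gamma)\cong N(\beta',\gamma')$ iff $(\beta,\gamma)=\alpha(\beta',\gamma')$ for some $\alpha\neq 0$, and concludes ${\rm dim\,Ext}(T_i,T_i)=2$ exactly as in Remark~\ref{4.5}. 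So your methodology is right and would succeed; only your prediction of which coordinate drops out is off.

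Your second route --- computing $\dim(J/J^2)$ directly and then invoking \cite[Proposition~III.1.14]{Aus95} --- is a legitimate alternative, and your observation that $\xi^i a e_i=[b_0,(g-\xi^i)]e_i-(g-\xi^i)ae_i\in J^2$ is exactly the mechanism that collapses $\overline{ae_i}$. Note, however, that this reverses the paper's logic: the paper proves Lemma~\ref{4.9} via the explicit module family, and only afterwards, inside Theorem~\ref{4.10}, uses $\dim{\rm Ext}(T_i,T_i)=2$ to identify $J^2$ with the explicit subspace $N$. Your route would need an independent verification that $J^2$ equals (not merely contains) the span of $\{ae_i\}\cup\{\text{degree}\geqslant 2\}$, which amounts to checking that this span is already a two-sided ideal --- doable via the same commutation identity, but it is the bookkeeping you flagged as the crux.
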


\begin{proof}
Let $0\leqslant i\leqslant t-1$. Then it follows from Theorems \ref{3.3}
and \ref{3.8} that there is only one simple module $T_i$
over the block $H(0,\mu)e_i$. Let $\beta, \gamma\in k$. Then there is an algebra map
$f: H(0,\mu)\rightarrow M_2(k)$ defined by
$$f_{\gamma}(g)=\begin{pmatrix} \xi^i & \beta\\0 & \xi^i\end{pmatrix},\
f_{\gamma}(a)=\begin{pmatrix} 0 &0\\0 & 0\end{pmatrix},\
f_{\gamma}(b)=\begin{pmatrix}\mu^{\frac{1}{p}}(1-\xi^i) & \gamma\\
0 &\mu^{\frac{1}{p}}(1-\xi^i)\end{pmatrix}.$$
Let $N(\beta, \gamma)$ be the corresponding $H(0, \mu)$-module.
Obviously, $N(\beta, \gamma)$ is a module over the block $H(0, \mu)e_i$.
An argument similar to $H$ shows that any 2-dimensional module over the block $H(0, \mu)e_i$
is isomorphic to some $N(\beta, \gamma)$ and that $N(\beta, \gamma)\cong N(\beta', \gamma')$
if and only if $(\beta, \gamma)=\alpha(\beta', \gamma')$ for some $0\neq \alpha\in k$.
It
follows that ${\rm dim(Ext}(T_i,T_i))=2$ from an argument similar to the
case $p=2$.
\end{proof}

\begin{theo}\label{4.10}
Each block $H(0,\mu)e_i$ is of wild representation type,
where $0\leqslant i\leqslant t-1$.
\end{theo}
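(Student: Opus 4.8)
The plan is to mirror the structure of the proof of Theorem \ref{4.7} for the case $p=2$, now working with the block $H(0,\mu)e_i$ in characteristic $p>2$. Recall from Lemma \ref{2.3} and \cite[Lemma I.3.3]{Erd90} that $H(0,\mu)e_i$ is a local symmetric algebra, and by Theorems \ref{3.3} and \ref{3.8} its unique simple module is the $1$-dimensional module $T_i$ with $P(T_i)\cong H(0,\mu)e_i$ of dimension $p^{s+2}$. First I would fix a convenient basis of the block. Since $g^{p^s}e_i=\xi^{ip^s}e_i$, the set $\{(g-\xi^i)^{i_1}a^{i_2}b^{i_3}e_i \mid 0\leqslant i_1\leqslant p^s-1,\ 0\leqslant i_2,i_3\leqslant p-1\}$ is a basis of $H(0,\mu)e_i$. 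Because $T_i$ is the only simple and $H(0,\mu)e_i$ is local, the Jacobson radical $J$ of $H(0,\mu)e_i$ is spanned by those basis elements of positive ``degree'' $i_1+i_2+i_3\geqslant 1$ (after a change of variable absorbing the scalar action of $b$, i.e.\ replacing $b$ by $b-\mu^{1/p}(1-\xi^i)$ on this block so that $b$ acts nilpotently; this is exactly the $b_0$ trick from Theorem \ref{3.9}).

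Next I would identify $J/J^2$ and $J^2/J^3$ explicitly. By \cite[Proposition III.1.14]{Aus95} together with Lemma \ref{4.9}, $\dim(J/J^2)=\dim(\mathrm{Ext}(T_i,T_i))=2$, and the two generators of $J$ modulo $J^2$ are (the classes of) $(g-\xi^i)e_i$ and $ae_i$ (since $b$, after the shift, lies in $J$ but is congruent mod $J^2$ to a combination of $(g-\xi^i)$ and $a$ coming from the relation $g^{-1}bg=a+b$, i.e.\ $bg-gb=ga$, rewritten in terms of $g-\xi^i$). Then, exactly as in the $p=2$ argument, I would write down an explicit left ideal $N\subseteq J^2$ spanned by the basis elements of degree $\geqslant 2$ together with $ae_i$, check using Lemma \ref{2.2}(1) and the relation $ba=ab+\tfrac12 a^2$ that $N$ is a left ideal and that $\dim(J/N)=2$, and conclude $J^2=N$. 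Then I would produce a further left ideal $M\subseteq J^3$ spanned by basis elements of degree $\geqslant 3$ together with a few degree-$2$ elements involving the $(g-\xi^i)$-factor, verify $J^2/M$ is semisimple so $J^3\subseteq M$, hence $J^3=M$, and read off that $J^2/J^3$ is spanned by the classes of $ae_i$, $(g-\xi^i)^2e_i$, $(g-\xi^i)be_i$, $b^2e_i$, so that $\dim(J^2/J^3)\geqslant 3$ (with equality or one more depending on whether $s=1$).

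Finally, having $H(0,\mu)e_i$ a local symmetric algebra with $\dim(J/J^2)=2$ and $\dim(J^2/J^3)\geqslant 3$, I would invoke \cite[Lemma III.4]{Erd90} (the same citation used in Theorem \ref{4.7}) to conclude that $H(0,\mu)e_i$ is of wild representation type. The main obstacle is the bookkeeping in the middle step: establishing that the proposed subspaces $N$ and $M$ are genuinely left ideals and computing their codimensions requires pushing $(g-\xi^i)$, $a$, and the shifted $b$ past each other using Lemma \ref{2.2}, and one must be careful that in characteristic $p$ the scalars $-\tfrac{m}{2}$, $\tfrac14 m(m-1)$, etc.\ appearing in those commutation relations do not unexpectedly vanish for small $m$ — but for $m=1,2$ and $p>2$ they do not, which is precisely what makes $\dim(J^2/J^3)\geqslant 3$ rather than smaller. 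Everything else is a direct transcription of the $p=2$ case, so once the basis and the radical filtration are pinned down the result follows.
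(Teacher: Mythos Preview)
Your overall approach is the same as the paper's: shift $b$ by the scalar $\mu^{1/p}(1-\xi^i)$, use the basis $\{(g-\xi^i)^{i_1}a^{j_1}b_0^{k_1}e_i\}$, build the left ideals $N$ and $M$, compute $\dim(J/J^2)=2$ via Lemma~\ref{4.9} and \cite[Prop.~III.1.14]{Aus95}, compute $\dim(J^2/J^3)$, and then invoke \cite[Lemma III.4]{Erd90}. Your description of $N$ (degree $\geqslant 2$ elements together with $ae_i$) and your spanning set for $J^2/J^3$ (the classes of $ae_i$, $(g-\xi^i)^2e_i$, $(g-\xi^i)b_0e_i$, $b_0^2e_i$) both agree with the paper.

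There is, however, an internal contradiction in your sketch. You claim the two generators of $J/J^2$ are $(g-\xi^i)e_i$ and $ae_i$, arguing that the shifted $b$ falls into $J^2$ via the relation $bg-gb=ga$. That relation says the opposite: rewritten as $\xi^i a e_i = b_0(g-\xi^i)e_i - (g-\xi^i)b_0e_i - (g-\xi^i)ae_i$, it shows $ae_i\in J^2$, not $b_0e_i\in J^2$. The correct generators of $J/J^2$ are $(g-\xi^i)e_i$ and $b_0e_i$; this is also forced by your own (correct) choice to put $ae_i$ into $N$. So the slip is harmless for the final argument, but the parenthetical justification should be reversed.

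One minor point: your caveat ``with equality or one more depending on whether $s=1$'' is a carry-over from the $p=2$ proof that does not apply here. For $p>2$ one has $(g-\xi^i)^{p^s}e_i=0$ but $(g-\xi^i)^2e_i\neq 0$ even when $s=1$ (since $p\geqslant 3$), so $\dim(J^2/J^3)=4$ unconditionally, exactly as the paper states.
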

\begin{proof}
Let $0\leqslant i\leqslant t-1$.
Since $\{g^{i_1}a^{j_1}b^{k_1}e_i|0\leqslant i_1\leqslant p^s-1, 0\leqslant j_1, k_1\leqslant p-1\}$
is a basis of $H(0, \mu)e_i$, $\{(g-\xi^i)^{i_1}a^{j_1}(b-\mu^{\frac{1}{p}}(1-\xi^i))^{k_1}e_i
|0\leqslant i_1\leqslant p^s-1, 0\leqslant j_1, k_1\leqslant p-1\}$
is also a basis of $H(0, \mu)e_i$.
Let $J$ denote the Jacobson radical of $H(0, \mu)e_i$.
Then it follows from Theorem \ref{3.3} that the set
$$\left\{(g-\xi^i)^{i_1}a^{j_1}(b-\mu^{\frac{1}{p}}(1-\xi^i))^{k_1}e_i
\left|\begin{array}{l}
0\leqslant i_1\leqslant p^s-1,\\
0\leqslant j_1, k_1\leqslant p-1,\\
1\leqslant i_1+j_1+k_1\\
\end{array}\right.\right\}$$
is a basis of $J$.
From $g^{-1}bg=a+b$ and $ba=ab+\frac{1}{2}a^2$, one can easily check that
$$\begin{array}{ll}
&(b-\mu^{\frac{1}{p}}(1-\xi^i))(g-\xi^i)^m\\
=&(g-\xi^i)^m(b-\mu^{\frac{1}{p}}(1-\xi^i))+m(g-\xi^i)^ma+m\xi^i(g-\xi^i)^{m-1}a\\
\end{array}$$
for all $m\geqslant 1$ and
$$(b-\mu^{\frac{1}{p}}(1-\xi^i))a=a(b-\mu^{\frac{1}{p}}(1-\xi^i))+\frac{1}{2}a^2.$$
Put
$$N={\rm span}\left\{(g-\xi^i)^{i_1}a^{j_1}(b-\mu^{\frac{1}{p}}(1-\xi^i))^{k_1}e_i, ae_i
\left|\begin{array}{l}0\leqslant i_1\leqslant p^s-1,\\
0\leqslant j_1, k_1\leqslant p-1,\\
2\leqslant i_1+j_1+k_1\\
\end{array}\right.\right\}.$$
Then from the first one of the above two equalities, one can see that $N\subseteq J^2$.
Obviously, ${\rm dim}(J/N)=2$. By
\cite[Proposition III.1.14]{Aus95} and Lemma \ref{4.9}, we have
${\rm dim}(J/J^2)={\rm dim(Ext}(T_i, T_i))=2$.
It follows that $J^2=N$. Now put
$$M={\rm span}\left\{\begin{array}{l}
(g-\xi^i)^{i_1}a^{j_1}(b-\mu^{\frac{1}{p}}(1-\xi^i))^{k_1}e_i,\\
(g-\xi^i)ae_i, a^2e_i, a(b-\mu^{\frac{1}{p}}(1-\xi^i))e_i\\
\end{array}
\left|\begin{array}{l}0\leqslant i_1\leqslant p^s-1,\\
0\leqslant j_1, k_1\leqslant p-1,\\
3\leqslant i_1+j_1+k_1\\
\end{array}\right.\right\}.$$
Since $J^2=N$, $M\subseteq J^3$. Now from the two equalities given above and
$ga=ag$, one can check that $M$ is a left ideal of $H(0,\mu)e_i$ and
$J^2/M$ is a semisimple module over $H(0, \mu)e_i$. Hence
$J^3\subseteq M$ and so $J^3=M$. Obviously,
$$J^2/M={\rm span}\left\{\overline{(g-\xi^i)^2e_i}, \overline{ae_i},
\overline{(g-\xi^i)(b-\mu^{\frac{1}{p}}(1-\xi^i))e_i}, 
\overline{(b-\mu^{\frac{1}{p}}(1-\xi^i))^2e_i}\right\}$$
is 4-dimensional, where $\overline{x}=x+M$ in $J^2/M$ for any $x\in J^2$.
Hence ${\rm dim}(J^2/J^3)=4$.
Since $H(0,\mu)e_i$ is a local symmetric algebra, it follows from
\cite[Lemma III.4]{Erd90} that $H(0, \mu)e_i$ is of wild
representation type.
\end{proof}

Now we consider the case of $\lambda\neq 0$. We only consider the representation type
of the block $H(1, \mu)e_0$. From Theorems \ref{3.4} and \ref{3.9},
the trivial module $S_0$ is the unique simple module over the block $H(1, \mu)e_0$,
and $H(1, \mu)e_0$ is a basic and local algebra.
Furthermore, $H(1, \mu)e_0$ is a symmetric algebra by Lemma \ref{2.3} and \cite[Lemma I.3.3]{Erd90}.
Then by setting $i=0$ in the proofs of Lemma \ref{4.9} and Theorem \ref{4.10},
one can get the following Lemma \ref{4.11} and Theorem \ref{4.12}

\begin{lem}\label{4.11}
We have ${\rm dim(Ext}(S_0,S_0))=2$ over the block $H(1,\mu)e_0$.
\end{lem}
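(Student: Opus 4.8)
The plan is to mimic the proof of Lemma \ref{4.9} with $i=0$, working over the local block $H(1,\mu)e_0$ whose unique simple module is the trivial module $S_0$ (by Theorems \ref{3.4} and \ref{3.9}). As in Remark \ref{4.5}, I would classify the $2$-dimensional modules over $H(1,\mu)e_0$ and read off $\dim(\mathrm{Ext}(S_0,S_0))$ from the equivalence classes of self-extensions of $S_0$. To this end, for $\beta,\gamma\in k$ I define $f_{\beta,\gamma}\colon H(1,\mu)\to M_2(k)$ by
$$f_{\beta,\gamma}(g)=\begin{pmatrix}1&\beta\\0&1\end{pmatrix},\quad f_{\beta,\gamma}(a)=\begin{pmatrix}0&0\\0&0\end{pmatrix},\quad f_{\beta,\gamma}(b)=\begin{pmatrix}0&\gamma\\0&0\end{pmatrix},$$
and let $N(\beta,\gamma)$ denote the corresponding module over $H(1,\mu)e_0$.

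First I would check that $f_{\beta,\gamma}$ respects the defining relations of $H(1,\mu)$. The decisive point is that $f_{\beta,\gamma}(g)$ is unipotent, so $f_{\beta,\gamma}(g)^p=f_{\beta,\gamma}(g^p)=I_2$ because $\mathrm{char}\,k=p$; hence the lifting relations $a^p=1-g^p$ and $b^p=\mu(1-g^p)$ both collapse to $f_{\beta,\gamma}(a)^p=f_{\beta,\gamma}(b)^p=0$, exactly as in the $H(0,\mu)$ situation. The relations $g^{-1}ag=a$, $g^{-1}bg=a+b$ and $ba=ab+\tfrac12 a^2$ then hold because $a$ acts as $0$ and $f_{\beta,\gamma}(g)^{-1}f_{\beta,\gamma}(b)f_{\beta,\gamma}(g)=f_{\beta,\gamma}(b)$, while $g^n=1$ holds since $p\mid n$ forces $n=0$ in $k$, so $f_{\beta,\gamma}(g)^n=I_2$. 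Thus each $f_{\beta,\gamma}$ is a genuine algebra map and $N(\beta,\gamma)$ is a module over the block $H(1,\mu)e_0$.

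Conversely, I would show every $2$-dimensional $H(1,\mu)e_0$-module is some $N(\beta,\gamma)$. Since $g$ has the single eigenvalue $\xi^0=1$ on this block, in a suitable basis $g$ acts as $I_2$ or as the Jordan block $\bigl(\begin{smallmatrix}1&1\\0&1\end{smallmatrix}\bigr)$; in either case $g^p=I_2$, so $a^p=1-g^p=0$ and $b^p=\mu(1-g^p)=0$ and both $a,b$ act nilpotently. Writing $G,A,B$ for the matrices of $g,a,b$, the relations $AG=GA$ and $BG=GA+GB$ together with the nilpotency of $A$ and $B$ force $A=0$ (this is the same computation as in Lemma \ref{4.3}(2), now carried out with the $p>2$ relation $BA=AB+\tfrac12A^2$). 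The remaining data $(G,B)$ is then exactly that of an $N(\beta,\gamma)$. This forcing of $a$ to act trivially is the one genuinely computational step, and I expect it to be the main obstacle; everything else is formal.

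Finally, a conjugation argument as in Lemma \ref{4.4} shows $N(\beta,\gamma)\cong N(\beta',\gamma')$ if and only if $(\beta,\gamma)=\alpha(\beta',\gamma')$ for some $0\neq\alpha\in k$, while tracking the distinguished inclusion and projection as in Remark \ref{4.5} shows that two self-extensions $E(\beta,\gamma)$ and $E(\beta',\gamma')$ of $S_0$ are equivalent precisely when $(\beta,\gamma)=(\beta',\gamma')$. Hence the equivalence classes of extensions of $S_0$ by $S_0$ are parametrized by the $2$-dimensional space $k^2$, giving $\dim(\mathrm{Ext}(S_0,S_0))=2$.
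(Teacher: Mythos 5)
Your proof is correct and follows essentially the same route as the paper, which simply obtains this lemma by setting $i=0$ in the proof of Lemma \ref{4.9}: you construct the two-dimensional modules $N(\beta,\gamma)$ via explicit algebra maps, show every two-dimensional module over the block is of this form, and read off $\dim(\mathrm{Ext}(S_0,S_0))=2$ from the equivalence classes of self-extensions. Your observation that the lifting relations $a^p=1-g^p$ and $b^p=\mu(1-g^p)$ collapse because $g$ acts unipotently on $H(1,\mu)e_0$ (so $g^p$ acts as the identity) is precisely the point that makes the $i=0$ specialization legitimate; the paper leaves this implicit.
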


\begin{theo}\label{4.12}
The block $H(1, \mu)e_0$ is of wild representation type.
\end{theo}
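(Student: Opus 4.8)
The plan is to follow exactly the template already established for the case $p=2$ in Theorem \ref{4.7} and for $\lambda = 0$ in Theorem \ref{4.10}, now specialized to the block $H(1,\mu)e_0$. As the paragraph preceding the statement indicates, the key structural facts are already in place: by Theorems \ref{3.4} and \ref{3.9} the trivial module $S_0$ is the unique simple module over $H(1,\mu)e_0$, so this block is basic and local; by Lemma \ref{2.3} and \cite[Lemma I.3.3]{Erd90} it is moreover a symmetric algebra; and by Lemma \ref{4.11} we have ${\rm dim(Ext}(S_0,S_0))=2$. The whole argument is then an application of \cite[Lemma III.4]{Erd90}: a local symmetric algebra whose radical satisfies ${\rm dim}(J/J^2)=2$ and ${\rm dim}(J^2/J^3)\geqslant 3$ is of wild representation type.

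First I would fix a basis of $H(1,\mu)e_0$ of the form $\{g^{i_1}a^{j_1}b_*^{k_1}e_0 \mid 0\leqslant i_1\leqslant p^s-1,\ 0\leqslant j_1,k_1\leqslant p-1\}$, where $b_*$ is whichever shifted element of $b$ is appropriate ($b$ itself for $\mu=0$, or $b_0=b+\alpha_0$ as in Theorem \ref{3.9}); equivalently, replace $g$ by $g-1$ and $a,b_*$ by themselves (recall $a\cdot v=0$ and $b\cdot v=0$ on $S_0$, and $g\cdot v=v$), to get a basis adapted to the radical. Since $S_0$ is the unique simple module with $g$ acting as $1$ and $a,b$ acting as $0$, the Jacobson radical $J$ of $H(1,\mu)e_0$ has basis those $(g-1)^{i_1}a^{j_1}b_*^{k_1}e_0$ with $i_1+j_1+k_1\geqslant 1$. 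Then, exactly as in the proof of Theorem \ref{4.10} with $i=0$, I would write down an explicit left ideal $N\subseteq J^2$ with ${\rm dim}(J/N)=2$, invoke \cite[Proposition III.1.14]{Aus95} together with Lemma \ref{4.11} to get ${\rm dim}(J/J^2)={\rm dim(Ext}(S_0,S_0))=2$, and conclude $J^2=N$. Next I would exhibit a left ideal $M\subseteq J^3$ such that $J^2/M$ is semisimple (hence $J^3=M$), using the commutation relations $g^{-1}bg=a+b$ and $ba=ab+\frac12 a^2$ shifted to $g-1$; then $J^2/M$ is spanned by the classes of $(g-1)^2 e_0$, $a e_0$, $(g-1)b_* e_0$, and $b_*^2 e_0$, so ${\rm dim}(J^2/J^3)\geqslant 3$ (it is $4$ unless $s=1$ kills $(g-1)^2e_0$, but $3$ suffices). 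Finally, since $H(1,\mu)e_0$ is local symmetric with ${\rm dim}(J/J^2)=2$ and ${\rm dim}(J^2/J^3)\geqslant 3$, \cite[Lemma III.4]{Erd90} gives that it is of wild representation type.

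The main obstacle, and the only place where genuine computation is needed, is verifying that the claimed $M$ really is a left ideal contained in $J^3$ and that $J^2/M$ is semisimple — i.e., that left multiplication by the generators $g-1$, $a$, $b_*$ sends each proposed spanning element of $J^2$ either into $M$ or into the claimed four-dimensional quotient, and that the quotient is annihilated by $J$. This requires careful bookkeeping with the shifted commutation relations; in particular one must check $b_*(g-1)^m = (g-1)^m b_* + m(g-1)^m a + m(g-1)^{m-1}a$ and $b_* a = a b_* + \frac12 a^2$ (the shift by the constant $\alpha_0$ changes nothing in these, since scalars are central), and then trace through the monomials of total degree $2$. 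This is entirely parallel to the $\lambda=0$ computation in Theorem \ref{4.10}, so I would simply remark that ``by setting the relevant data into the proof of Theorem \ref{4.10}'' the same conclusion follows, as the paragraph immediately before the statement already anticipates.
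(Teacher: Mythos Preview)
Your proposal is correct and follows exactly the paper's own approach: the paper simply states that Theorem \ref{4.12} (and Lemma \ref{4.11}) are obtained ``by setting $i=0$ in the proofs of Lemma \ref{4.9} and Theorem \ref{4.10}'', and your outline is a faithful elaboration of precisely that reduction. Two harmless over-complications: for $i=0$ the shift $\alpha_0=\mu^{1/p}(\xi^0-1)=0$ vanishes, so $b_*=b$ regardless of $\mu$; and since $p>2$ forces $p^s\geqslant 3$, the element $(g-1)^2e_0$ is always nonzero here, so ${\rm dim}(J^2/J^3)=4$ with no $s=1$ caveat needed (that caveat belonged to the $p=2$ case in Theorem \ref{4.7}).
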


For the case of $t>1$, we don't know whether $H(1, \mu)e_i$ is of tame or wild
representation type, where $1\leqslant i\leqslant t-1$.

Summarizing the above discussion, we have the following result.

\begin{theo}\label{4.13}
Assume $p>2$. Then $H(\lambda,\mu)$ is of wild representation type
for any $\lambda, \mu\in k$. In particular,
$\mathcal{B}(V)\# kG$ is of wild representation type.
\end{theo}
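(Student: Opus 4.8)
The plan is to reduce the theorem to the block-level results already assembled in this section. Recall that the center of $H(\lambda,\mu)$ contains $g^p$, which yields the complete set of central orthogonal primitive idempotents $\{e_0,\dots,e_{t-1}\}$, and hence the block decomposition $H(\lambda,\mu)=\bigoplus_{0\leqslant i\leqslant t-1}H(\lambda,\mu)e_i$. A finite-dimensional algebra is of wild representation type as soon as one of its blocks is; so it suffices to exhibit a single wild block. First I would dispose of the case $\lambda=0$: by Theorem \ref{4.10}, \emph{every} block $H(0,\mu)e_i$ is wild, so $H(0,\mu)$ is wild for all $\mu\in k$.

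Next I would handle $\lambda\neq 0$. By Lemma \ref{2.5} we have an isomorphism of Hopf algebras $H(\lambda,\mu)\cong H(1,\lambda^{-1}\mu)$, so the representation type of $H(\lambda,\mu)$ coincides with that of $H(1,\lambda^{-1}\mu)$. For $H(1,\nu)$ with $\nu\in k$, Theorem \ref{4.12} shows that the principal block $H(1,\nu)e_0$ is of wild representation type, whence $H(1,\nu)$ is wild. Combining, $H(\lambda,\mu)$ is of wild representation type for every $\lambda,\mu\in k$.

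Finally, for the ``in particular'' clause: when $p>2$ we noted in Section \ref{2} that $\mathcal{B}(V)\#kG=H(0,0)$, so the wildness of $\mathcal{B}(V)\#kG$ is the special case $\lambda=\mu=0$ of the statement just proved (via Theorem \ref{4.10} with arbitrary $\mu$, here $\mu=0$). Thus nothing new is needed; one simply invokes the $\lambda=0$ case.

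I do not expect a genuine obstacle here, since this theorem is purely a packaging statement: all the substantive work — computing $\operatorname{Ext}(S_i,S_i)$, identifying $J^2/J^3$, and applying \cite[Lemma III.4]{Erd90} — is already carried out in Lemma \ref{4.9}, Theorem \ref{4.10}, Lemma \ref{4.11}, and Theorem \ref{4.12}. The only point requiring a word of care is making explicit that ``wild'' passes from a block to the whole algebra (immediate, since $\operatorname{mod} H(\lambda,\mu)\cong\prod_i\operatorname{mod} H(\lambda,\mu)e_i$ and a wild subcategory of this form forces wildness of the product), and that Lemma \ref{2.5} may legitimately be used to transport representation type. Everything else is a citation.
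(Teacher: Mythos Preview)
Your proposal is correct and matches the paper's own treatment: Theorem \ref{4.13} is stated as a summary of the preceding discussion, with no separate proof, and the intended argument is exactly the one you give---invoke Theorem \ref{4.10} for $\lambda=0$, reduce $\lambda\neq 0$ to $\lambda=1$ via Lemma \ref{2.5} and then apply Theorem \ref{4.12} to the principal block, and note $\mathcal{B}(V)\#kG=H(0,0)$ for the final clause.
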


\section*{\bf Acknowledgment}
This work is supported by NSF of China, No. 10771183, and supported by Doctorate foundation,
No. 200811170001, Ministry of Education of China.

\end{document}